\newtheorem{theorem}{Theorem}[section]
\newtheorem{corollary}[theorem]{Corollary}
\newtheorem{lemma}[theorem]{Lemma}
\newtheorem{proposition}[theorem]{Proposition}
\newtheorem{conjecture}[theorem]{Conjecture}
\theoremstyle{definition}
\newtheorem{definition}[theorem]{Definition}
\newtheorem{rem}[theorem]{Remark}
\newtheorem{example}[theorem]{Example}
\newtheorem{claim}[theorem]{Claim}
\newcommand{\nc}{\newcommand}
\nc{\on}{\operatorname}
\newcommand{\Z}{\mathbb{Z}}
\newcommand{\R}{\mathbb{R}}
\newcommand{\Q}{\mathbb{Q}}
\newcommand{\C}{\mathbb{C}}
\newcommand{\FF}{\mathbb{F}}
\newcommand{\F}{\mathbb{F}}
\newcommand{\Img}{\on{Im}}
\newcommand{\coker}{\on{coker}}
\newcommand{\Syl}{\on{Syl}}
\title{Sandpile Groups of Cayley Graphs of $\FF_2^r$}
\author{Jiyang Gao\\
{\tt\small jgao@math.harvard.edu}
\and
Jared Marx-Kuo\\
{\tt\small jmarxkuo@stanford.edu}
\and
Vaughan McDonald\\
{\tt\small 
vkm@stanford.edu}
\and
Chi Ho Yuen\\
{\tt\small chyuen@math.nctu.edu.tw}
}
\date{\today}
\newcommand{\subjclass}[1]{%
  \let\@oldtitle\@title%
  \gdef\@title{\@oldtitle\footnotetext{\emph{MSC.} #1}}%
}
\newcommand{\keywords}[1]{%
  \let\@@oldtitle\@title%
  \gdef\@title{\@@oldtitle\footnotetext{\emph{Keywords.} #1}}%
}
\subjclass{05C50; 15B36}
\keywords{Sandpile group; Cayley graph}
\begin{document}
 
\maketitle

\begin{abstract}
The \textbf{sandpile group} of a connected graph $G$, defined to be the torsion part of the cokernel of the graph Laplacian, is a subtle graph invariant with combinatorial, algebraic, and geometric descriptions.
Extending and improving previous works on the sandpile group of hypercubes, we study the sandpile groups of the Cayley graphs of $\mathbb{F}_2^r$, focusing on their poorly understood Sylow-$2$ component.
We find the number of Sylow-$2$ cyclic factors for ``generic'' Cayley graphs and deduce a bound for the non-generic ones.
Moreover, we provide a sharp upper bound for their largest Sylow-$2$ cyclic factors. In the case of hypercubes, we give exact formulae for the largest $n-1$ Sylow-$2$ cyclic factors.
Some key ingredients of our work include the natural ring structure on these sandpile groups from representation theory, and calculation of the $2$-adic valuations of binomial sums via the combinatorics of carries.
\end{abstract}
\tableofcontents

\section{Introduction and Notation}

Let $G = (V, E)$ be a connected graph on $n$ (ordered) vertices with no loops.  
The \emph{Laplacian $L(G)$} of graph $G$ is the $n \times n$ matrix with entries
\[ 
L(G)_{u,v} = \begin{cases}
\deg(u) & u = v \\
- m(u,v) & u \neq v,
\end{cases}
\]
where $m(u,v)$ is the number of edges between $u$ and $v$.
The Laplacian is an important matrix associated with the graph that has been studied from different perspectives, ranging from more theoretical aspects to real life algorithms \cite{SpectraBook,LaplacianBook}. 
Among these perspectives, as $L(G)$ is an integer matrix, we can study its properties as a linear map of $\Z$-modules $L(G): \Z^n \to \Z^n$.

From the definition of $L(G)$ and the assumption on $G$, $\ker_{\Z} L(G)$ is spanned by the vector $(1,1, \dots, 1)^T$. Therefore $\Img L(G)$ is some codimension $1$ sublattice of $\Z^n$. It follows that the cokernel can be written as
\[
\coker{L(G)} = 
\Z^n/\Img L(G) \cong \Z \oplus K(G),
\] 
where $K(G)$ is a finite abelian group, known as the \emph{sandpile group} (also {\em critical group} or {\em Jacobian} in the literature) of $G$. Kirchhoff's Matrix--Tree Theorem shows that the size of $K(G)$ is the number of spanning trees of $G$. $K(G)$ is a subtle isomorphism invariant of a graph \cite{SandpileBook,ChipBook}, and is our main object of study. We are interested in understanding $K(G)$ for Cayley graphs of the group $\FF_2^r$, with an emphasis on their Sylow-$2$ subgroups.

A motivation for studying this family of graphs is their connection with representation theory.
Roughly speaking (see \cref{sec:background} for the details in our setting and see \cite{duceyjalil} for the more general case), for a Cayley graph $G$ of a finite abelian group and a prime $p$ that does not divide the order of the group, one can use the complex character table of the group to diagonalize $L(G)$ so that its eigenvalues are expressed as character sums, then an algebraic argument shows that one can read off the $p$-adic valuations of the invariant factors of $K(G)$ from these eigenvalues.
Further generalizations have been studied for the McKay--Cartan matrices of representations of finite groups \cite{reinerquiver, Gaetzrep}, and representations of finite-dimensional Hopf algebras \cite{GHR_Hopf}.
However, in our setting, the strategy fails when $p=2$, which echoes the difficulties in modular representation theory\footnote{The failure of the strategy comes from the fact that $|G|$ is not invertible in $\Z_p$ (or any finite extension thereof) when $p\mid |G|$, analogous to the situation when $|G|$ is not invertible in characteristic $p$.}.

Nevertheless, partial progress has been made along this direction, particularly for the hypercube graphs $Q_n$, which are the Cayley graphs of $\FF_2^n$ with respect to the standard basis $\{e_1,\dots, e_n\}\subseteq \FF_2^n$. 
In 2003, Bai \cite{bai} determined the Sylow-$p$ subgroups of $K(Q_n)$ for $p \neq 2$ with an elementary argument, and derived formulae for the number of Sylow-$2$ cyclic factors and the number of $\Z/2\Z$-factors in $K(Q_n)$.
In 2015, Chandler et al. \cite{chandler2017smith} determined the cokernel of the \emph{adjacency matrix} (known as the \emph{Smith group}) of $Q_n$ in terms of $n$. 
Anzis and Prasad \cite{anzisprasad} made the next step in 2016 by bounding the largest Sylow-$2$ cyclic factor of $K(Q_n)$.

We also note the other work by Chandler et al. \cite{chandler2017paley}, where they determined the critical groups of Paley graphs, a special family of Cayley graphs of $\FF_q$ for prime powers $q=p^k\equiv 1 \pmod 4$.
They were able to approach the Sylow-$p$ subgroup of the critical group via Jacobi sums, and by applying Stickelberger's theorem, they further reduced the problem to the combinatorics of ``carries'' when adding two $p$-ary numbers. Our approach of \cref{hypercubecyclicc} thus has a similar flavor of their work.

We now summarize our contributions in this paper.
It is worth emphasizing that many of our arguments are based on the natural ring structure of $K(G)$ coming from representation theory \cite{reinerquiver} and the corresponding polynomial algebra.
We begin by defining a \emph{generic} set of generators $M$ for a Cayley graph of $\F_2^r$ as set of generators whose sum is nonzero (see \cref{definition: generic}). For example, hypercubes have generic generating sets.  We then have the following result that generalizes Bai's result \cite[Theorem~1.1]{bai}.
\begin{theorem}
\label{dMgeneric}
Suppose that $M$ is generic. Then the number of Sylow-2 cyclic factors of $K(G(\FF_2^{r}, M))$ is $2^{r-1} - 1$.
\end{theorem}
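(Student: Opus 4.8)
The plan is to reduce the statement to a single rank computation over $\F_2$ and then carry out that computation inside the group algebra of $\FF_2^r$. First I would record that the number of Sylow-$2$ cyclic factors of $K(G)$ is exactly its $2$-rank, $\dim_{\F_2}(K(G)\otimes\F_2)$. Since $\coker L(G)\cong\Z\oplus K(G)$ and tensoring with $\F_2$ is right exact, $\coker(L(G))\otimes\F_2\cong\coker(L(G)\bmod 2)$, so
\[
\dim_{\F_2}\coker(L(G)\bmod 2)=1+\#\{\text{Sylow-}2\text{ cyclic factors of }K(G)\}.
\]
With $n=2^r$ vertices, this reduces the theorem to the single claim $\on{rank}_{\F_2}(L(G)\bmod 2)=2^{r-1}$, since then the count is $2^r-2^{r-1}-1=2^{r-1}-1$.

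Next I would translate $L(G)\bmod 2$ into ring theory. Because $\FF_2^r$ is abelian, $L(G)=|M|I-A$ reduces mod $2$ to the operator acting on $\F_2^{2^r}$ by multiplication by $\ell:=|M|+\sum_{m\in M}x^m$ in the group algebra $R:=\F_2[\FF_2^r]$, where $x^m$ denotes the group element $m$ (here $-A\equiv A$ and $A_{u,w}=[u+w\in M]$). Setting $y_i=x_i+1$ identifies $R\cong\F_2[y_1,\dots,y_r]/(y_1^2,\dots,y_r^2)$, a local Artinian ring with maximal ideal $\mathfrak m=(y_1,\dots,y_r)$ and $\dim_{\F_2}R=2^r$. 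Expanding $x^m=\prod_{i:\,m_i=1}(1+y_i)$ and summing, the constant terms cancel (as $2|M|\equiv 0$), giving $\ell=\sum_i s_i y_i+(\text{terms of degree}\ge 2)$ with $s=\sum_{m\in M}m$. Thus genericity, $s\neq 0$, is precisely the condition that $\ell$ has nonzero linear part, i.e.\ $\ell\in\mathfrak m\setminus\mathfrak m^2$. The problem becomes: for such $\ell$, the multiplication map $m_\ell\colon R\to R$ has rank exactly $2^{r-1}$.

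For the rank I would first apply an invertible $\F_2$-linear change of the $y_i$, which is an automorphism of $R$ because in characteristic $2$ it preserves the ideal $(y_1^2,\dots,y_r^2)$, to normalize the linear part of $\ell$ to $y_1$. The dimension $\dim_{\F_2}R/\ell R$ is unchanged if $\ell R$ is replaced by its initial (lowest-degree) ideal $\on{in}(\ell R)$ with respect to the grading on $R$. Since $\on{in}(\ell)=y_1$, one has $\on{in}(\ell R)\supseteq(y_1)$, hence $\dim R/\ell R\le\dim R/(y_1)=2^{r-1}$, which already gives $\on{rank}_{\F_2}(L(G)\bmod 2)\ge 2^{r-1}$. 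The crux, and the step I expect to be the main obstacle, is the reverse inclusion $\on{in}(\ell R)\subseteq(y_1)$, i.e.\ that every nonzero element of $\ell R$ has initial term divisible by $y_1$. I would prove this by writing $R=R'\oplus y_1R'$ with $R'=\F_2[y_2,\dots,y_r]/(y_2^2,\dots,y_r^2)$ and $\ell=y_1(1+\beta)+\alpha$ where $\alpha\in\mathfrak m_{R'}^2$ and $\beta\in\mathfrak m_{R'}$; for $f=a+y_1b$ one computes $\ell f=\alpha a+y_1[(1+\beta)a+\alpha b]$, and a short degree estimate shows the $y_1$-component has strictly smaller lowest degree than the $y_1$-free component $\alpha a$ (and is all of $\ell f$ when $a=0$). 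Hence $\on{in}(\ell f)$ is divisible by $y_1$, giving $\on{in}(\ell R)=(y_1)$ and $\dim_{\F_2}R/\ell R=2^{r-1}$. Therefore $\on{rank}_{\F_2}(L(G)\bmod 2)=2^r-2^{r-1}=2^{r-1}$, which completes the reduction and proves the theorem.
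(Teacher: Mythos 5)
Your reduction and your overall route are correct, and the endgame is genuinely different from the paper's. After your first two paragraphs you are computing $\dim_{\F_2}R/\ell R$ for $R=\F_2[y_1,\dots,y_r]/(y_1^2,\dots,y_r^2)$, which is exactly the object the paper reaches by tensoring its presentation $\Z\oplus K(G)\cong\Z[x_1,\dots,x_r]/I(G)$ with $\Z/2\Z$ and substituting $u_i=x_i-1$; your observation that genericity means $\ell\in\mathfrak{m}\setminus\mathfrak{m}^2$ is the paper's observation that the last relation acquires a degree-one term $u_k$ with coefficient $1$. The difference is in how the dimension is computed: the paper writes the relation as $u_kf-g$ with $f$ of constant term $1$, notes $f^2=1$ so that $u_k\equiv fg$ can be eliminated, and exhibits an explicit isomorphism of the quotient with $\F_2[u_1,\dots,u_{r-1}]/(u_1^2,\dots,u_{r-1}^2)$; you instead degenerate to the initial ideal and claim $\on{in}(\ell R)=(y_1)$. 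Your framing is attractive because the easy containment $(y_1)\subseteq\on{in}(\ell R)$ hands you one of the two bounds for free.

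However, the crux step --- that every nonzero element of $\ell R$ has initial form divisible by $y_1$ --- is not established by the kind of degree estimate you invoke, because of cancellation inside the $y_1$-component. Writing $\deg_{\min}(\cdot)$ for the lowest degree occurring, take $r\ge 6$, $\beta=0$, $\alpha=y_2y_3$, $a=y_2y_3+y_4y_5y_6$, $b=1$: then $\alpha a=y_2y_3y_4y_5y_6\neq 0$ has degree $5$, while the $y_1$-component $(1+\beta)a+\alpha b=y_4y_5y_6$ has lowest degree $3$, which is \emph{strictly larger} than $\deg_{\min}(a)=2$. So any estimate that bounds the $y_1$-component's lowest degree by that of $a$ (the natural reading of your ``short degree estimate'') is false; the $\alpha b$ term can annihilate the bottom of $a$. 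The inequality you need is nevertheless true, but its proof must compare $\alpha a$ against the $y_1$-component $c:=(1+\beta)a+\alpha b$ itself: from $a=(1+\beta)(c+\alpha b)$, $(1+\beta)^2=1$ and $\alpha^2=0$ one gets the identity $\alpha a=(1+\beta)\alpha c$, whence $\deg_{\min}(\alpha a)=\deg_{\min}(\alpha c)\ge 2+\deg_{\min}(c)>1+\deg_{\min}(c)$ whenever $\alpha a\neq 0$ (in the example above, degrees $5$ versus $4$). A cleaner patch is to normalize the multiplier rather than estimate: every element of $\mathfrak{m}$ squares to zero in $R$, so $\ell^2=0$, and since $y_1=(1+\beta)(\ell+\alpha)$ one finds $\ell\cdot(a+y_1b)=\ell\cdot\bigl(a+(1+\beta)\alpha b\bigr)$; hence $\ell R=\ell R'$, and for $0\neq c\in R'$ the element $\ell c=\alpha c+y_1(1+\beta)c$ visibly has initial form $y_1\cdot\on{in}(c)$. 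Note what this repair amounts to: expressing $y_1$ as a unit times $\ell$ plus terms in $R'$ is precisely the elimination that drives the paper's proof, so once the gap is filled the two arguments are closer than they first appear.
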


On the other hand, we provide a qualitative result for non-generic cases that complements \cref{dMgeneric}:

\begin{theorem} \label{dMnon-generic}
Suppose that $M$ is not generic. Then the number of Sylow-2 cyclic factors of $K(G(\FF_2^{r}, M))$ is strictly larger than $2^{r-1} - 1$.
\end{theorem}

We then investigate the size of the largest Sylow-$2$ cyclic factor in $K(G(\FF_2^{r}, M))$. 
Suppose that $K(G)$ has the invariant factor decomposition
$K(G) = \bigoplus_{i = 1}^{d}\Z/c_i(G)\Z$, where $c_d(G) \mid c_{d-1}(G)\mid c_{d-2}(G)\mid \dots \mid c_1(G)$, thus $c_1(G)$ is the largest cyclic factor. 
Define $v_2(x)$, the $2$-adic valuation of $x$, as the largest $n$ such that $2^n\mid x$. We adopt the methods of Anzis and Prasad to extend and improve their upper bound to arbitrary Cayley graphs $G$ of $\FF_2^r$.
\begin{theorem}
The largest cyclic factor $\Z / c_1(G) \Z$ of $K(G(\FF_2^r,M))$ satisfies
\[
v_2(c_1(G)) \leq \lfloor \log_2 n\rfloor + r - 1,
\]
where $n$ is the number of generators in $M$.
\end{theorem}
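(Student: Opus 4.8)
The plan is to recast the problem in the group ring $R=\Z[\FF_2^r]\cong \Z[x_1,\dots,x_r]/(x_i^2-1)$, where the Laplacian $L(G)$ becomes multiplication by $\ell=n-\sum_{m\in M}x^m$ (with $x^m=x_1^{m_1}\cdots x_r^{m_r}$). Then $\coker L(G)=R/\ell R\cong \Z\oplus K(G)$, the free part being detected by the augmentation $\epsilon\colon R\to\Z$, $x_i\mapsto 1$; since $\ell\in\ker\epsilon=:I$, the torsion part is $K(G)=I/\ell R$. The ideal $I$ is generated by $y_j:=x_j-1$, so the largest cyclic factor $c_1(G)$, which is the exponent of $K(G)$, is the least $c$ with $c\,y_j\in\ell R$ for all $j$. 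As only the Sylow-$2$ part matters, I pass to the localization $R_{(2)}=R\otimes\Z_{(2)}$ and bound, for each $j$, the least $a$ with $2^a y_j\in\ell R_{(2)}$; the maximum over $j$ equals $v_2(c_1(G))$.

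Next I diagonalize $\ell$ over $\Q$ using the characters of $\FF_2^r$. Writing $\chi\cdot m$ for the $\FF_2$-pairing, the idempotents attached to $\chi\in\FF_2^r$ simultaneously diagonalize multiplication, and $\ell$ acts on the $\chi$-component by $\lambda_\chi=n-\sum_{m}(-1)^{\chi\cdot m}=2k_\chi$, where $k_\chi=|\{m\in M:\chi\cdot m=1\}|$. Connectivity of $G$ forces $M$ to span $\FF_2^r$, so $1\le k_\chi\le n$ for every $\chi\neq 0$, while $\lambda_0=0$ accounts for the kernel. Inverting $\ell$ on the nonzero components, the solution $w^{(j)}\in I\otimes\Q$ of $\ell\,w^{(j)}=y_j$ has Fourier coefficients $\widehat{w^{(j)}}_\chi=\widehat{(y_j)}_\chi/\lambda_\chi$, which (using $\widehat{(y_j)}_\chi=(-1)^{\chi_j}-1$) equals $-1/k_\chi$ when $\chi_j=1$ and $0$ otherwise, with $\widehat{w^{(j)}}_0=0$.

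The key step is that $\ell$ kills $\sigma:=\sum_{g\in\FF_2^r}x^g$, the all-ones vector spanning $\ker\ell$. Hence $2^a y_j\in\ell R_{(2)}$ holds if and only if $2^a w^{(j)}$ can be corrected by a rational multiple of $\sigma$ into $R_{(2)}$, i.e.\ if and only if all coordinates of $2^a w^{(j)}$ are congruent modulo $\Z_{(2)}$. Equivalently, I must bound the $2$-adic denominators of the coordinate differences $w^{(j)}_g-w^{(j)}_{g'}$. Applying the inverse character transform, every $\chi$ with $\chi\cdot(g+g')=0$ drops out and each surviving $\chi$ contributes a factor $2$, giving
\[
w^{(j)}_g-w^{(j)}_{g'}=\frac{-1}{2^{\,r-1}}\sum_{\substack{\chi_j=1\\ \chi\cdot(g+g')=1}}\frac{(-1)^{\chi\cdot g}}{k_\chi}.
\]
This is the heart of the improvement: the freedom to subtract $\sigma$ replaces the naive prefactor $2^{-r}$ by $2^{-(r-1)}$, saving one power of $2$. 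Since $1\le k_\chi\le n$ gives $v_2(1/k_\chi)\ge-\lfloor\log_2 n\rfloor$, the displayed sum has $v_2\ge-(r-1)-\lfloor\log_2 n\rfloor$, so $2^{\lfloor\log_2 n\rfloor+r-1}\bigl(w^{(j)}_g-w^{(j)}_{g'}\bigr)\in\Z_{(2)}$ for all $g,g'$ and all $j$, whence $v_2(c_1(G))\le\lfloor\log_2 n\rfloor+r-1$.

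The main obstacle is exactly the correct $2$-adic bookkeeping in this last step: a direct estimate on the denominators of $w^{(j)}$ itself only yields $\lfloor\log_2 n\rfloor+r$, and the extra power of $2$ is recovered only by exploiting the kernel vector $\sigma$, i.e.\ by passing from $w^{(j)}$ to its coordinate differences. A secondary point to check is that no further cancellation is required, since $v_2(k_\chi)=\lfloor\log_2 n\rfloor$ is attained for some $\chi$ (whose $k_\chi$ is the largest power of $2$ not exceeding $n$), making the term-by-term estimate tight. This recovers the Anzis--Prasad bound for $Q_n$ as the case $r=n$, $M=\{e_1,\dots,e_n\}$, and the computation for $C_4=G(\FF_2^2,\{e_1,e_2\})$, where $K(C_4)=\Z/4\Z$ and $\lfloor\log_2 2\rfloor+2-1=2$, confirms sharpness.
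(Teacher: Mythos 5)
Your proof is correct, and its analytic core is the same as the paper's: you diagonalize the Laplacian by the characters of $\FF_2^r$, solve $\ell\,w^{(j)} = x_j - 1$ over $\Q$, and---crucially---use the kernel vector $\sigma$ to pass to coordinate differences, which is precisely the device in \cref{orderofmonomial} (the translates $\mathbf{v}_0 + k\mathbf{1}$ and the quantity $p(v) - p(\vec{0})$) that recovers the extra factor of $2$ over Anzis--Prasad \cite{anzisprasad}; your closing estimate $1 \le k_\chi \le n$ is \cref{thm: largeCycFactorTheorem} and \cref{Syl2topcyclic} merged into a single step, since $\lambda_\chi = 2k_\chi$. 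The one place you genuinely diverge is the reduction to the elements $x_j - 1$: the paper's \cref{maxCyclicLemma} invokes the representation-ring results of Benkart--Klivans--Reiner \cite{reinerquiver} to identify the finite-order elements and then inducts on squarefree monomials, whereas you note that the torsion part is $I/\ell R$ for the augmentation ideal $I = (x_1 - 1, \dots, x_r - 1)$ and that, because $\ell R$ is an \emph{ideal}, the condition $c(x_j - 1) \in \ell R$ for all $j$ already forces $cI \subseteq \ell R$; this is more self-contained and eliminates the external citation, which is a genuine (if modest) simplification. One side remark of yours is false, though harmless: it is not true in general that $v_2(k_\chi) = \lfloor \log_2 n \rfloor$ is attained for some $\chi$ --- for example $r = 2$, $M = \{e_1, e_1, e_2, e_2, e_2\}$ has $n = 5$ and $\lfloor \log_2 5 \rfloor = 2$, but $k_\chi \in \{2,3,5\}$, so no $k_\chi$ is divisible by $4$ --- hence the term-by-term estimate need not be tight for arbitrary $M$; the bound is only asymptotically sharp (the paper realizes it for $Q_{2^k}$ and $Q_{2^k+1}$ via \cref{corollary:asymptoticsharp}), and none of this affects the validity of your upper bound.
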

In the case of $Q_n$, we go further to explicitly determine the top cyclic factor.
\begin{theorem} \label{hypercubecyclicc}
For $n \ge 2$, the largest cyclic factor $\Z / c_1(Q_n)\Z$ of $K(Q_n)$ satisfies
\[
v_2(c_1(Q_n))=\max\big\{\max_{x<n}\{v_2(x)+x\}, v_2(n)+n-1\big\}.
\]
\end{theorem}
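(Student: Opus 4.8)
The plan is to work $2$-adically and turn the computation of $v_2(c_1(Q_n))$ into a divisibility question in a group ring. Over $\Z_2$ the Laplacian of $Q_n$ is multiplication by $n-\sum_i x_i$ on $R_0 := \Z_2[x_1,\dots,x_n]/(x_i^2-1)$; the substitution $y_i = x_i-1$ gives an isomorphism $R_0\cong R := \Z_2[y_1,\dots,y_n]/(y_i^2+2y_i)$ under which the Laplacian becomes multiplication by $-s$, where $s=y_1+\dots+y_n$. Thus the Sylow-$2$ part of $K(Q_n)$ is the torsion submodule of $M:=R/sR$. First I would identify this torsion: decomposing $R\otimes\Q_2=\prod_{\epsilon\in\{0,1\}^n}\Q_2$ (the factor $\epsilon$ sends $y_i\mapsto 0$ if $\epsilon_i=0$ and $y_i\mapsto -2$ if $\epsilon_i=1$), the element $s$ acts as $-2|\epsilon|$, hence is invertible off the component $\epsilon=0$. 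Therefore $M_{\mathrm{tors}}$ is exactly $\ker\phi/sR$, where $\phi\colon R\to\Z_2$ picks out the $\epsilon=0$ coordinate, and $v_2(c_1(Q_n))$ is the base-$2$ logarithm of the exponent of this finite group, i.e.\ the least $e$ with $2^e\ker\phi\subseteq sR$. Since $\ker\phi=(y_1,\dots,y_n)$ and $Q_n$ is symmetric in the coordinates, this reduces to the least $e$ with $2^e y_1\in sR$, i.e.\ to the order of $y_1$ in $M$.

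Next I would make the equation $2^e y_1=sw$ explicit. In the idempotent basis $y_1$ and $s$ have known coordinates, so $w$ is forced on every component $\epsilon\neq 0$: one gets $w_\epsilon=2^e\epsilon_1/|\epsilon|$, while its $\epsilon=0$ coordinate $w_0$ is a free parameter (the free rank-one part of $M$). The content is then the integrality requirement $w\in R$. Converting idempotent coordinates back to the monomial basis $\{\prod_{i\in S}y_i\}$ is a M\"obius inversion over the Boolean lattice, and the coefficient attached to a set $T$ turns out to depend only on $|T|$ and on whether $1\in T$; the key simplification is the binomial identity $\sum_{j=0}^{m-1}\binom{m-1}{j}\tfrac{(-1)^j}{j+1}=\tfrac1m$. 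Carrying this out, I expect the integrality of $w$ to collapse to two families of $2$-adic congruences: $v_2(w_0)\ge n-1$ (from the sets with $1\notin T$, the binding one having $|T|=n-1$) and $v_2\!\big(w_0-2^e/m\big)\ge m$ for each $m=1,\dots,n$ (from the sets with $1\in T$, $|T|=m$).

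It then remains to find the least $e$ for which these congruences admit a common solution $w_0\in\Z_2$. Since all the moduli are powers of $2$, solvability is equivalent to pairwise consistency, and pairing each congruence $v_2(w_0-2^e/m)\ge m$ against $v_2(w_0)\ge n-1$ yields $e\ge m+v_2(m)$ for $m\le n-1$ and $e\ge (n-1)+v_2(n)$ for $m=n$; the maximum of these is exactly the claimed formula. I expect the main obstacle to be proving that these constraints against $w_0$ already force every remaining consistency constraint between two congruences $v_2(w_0-2^e/m)\ge m$ and $v_2(w_0-2^e/m')\ge m'$, so that the bound is tight and an actual $w_0$ exists at $e$ equal to the stated maximum. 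This is a finite but delicate $2$-adic argument: the pairwise condition for $m<m'$ reads $e+v_2(m'-m)\ge m+v_2(m)+v_2(m')$, and one verifies --- using that $v_2(a-b)=\min(v_2(a),v_2(b))$ whenever $v_2(a)\neq v_2(b)$ --- that it is already implied by the single-index bounds $e\ge m+v_2(m)$ ($m\le n-1$) and $e\ge (n-1)+v_2(n)$. Securing this tightness, rather than the lower bound, is where the real work lies.
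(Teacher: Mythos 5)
Your proposal is correct, and it takes a genuinely different route from the paper's. Both proofs start identically: reduce to the additive order of $x_1-1$ (your $y_1$; this is \cref{maxCyclicLemma}), then solve the Laplacian equation in the character (idempotent) basis, where one coordinate is free because of the rank-one part of the cokernel. From there you diverge. The paper eliminates the free parameter right away (\cref{orderofmonomial}), reorganizes the resulting hyperplane sums into subcube sums via the span lemma (\cref{lem:spanlemma}, \cref{corollary: topcyclic}), and is then forced to estimate $2$-adic valuations of the non-alternating binomial sums $\sum_{i}\binom{q}{i}/(p+i)$ of \cref{lem:binomial}; that is exactly where Kummer's theorem and the binary-expansion case analysis (\cref{binomial2} and the subsequent claim) enter. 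You instead keep the free coordinate $w_0$ alive and apply M\"obius inversion to the forced idempotent coordinates $2^e\epsilon_1/|\epsilon|$; since these depend only on $|\epsilon|$ and $\epsilon_1$, the alternating sums telescope exactly through $\sum_{j=0}^{m-1}\binom{m-1}{j}\frac{(-1)^j}{j+1}=\frac{1}{m}$, and integrality collapses to precisely the two families of congruences you state --- I verified the inversion gives $c_T=w_0/2^{|T|}$ for $1\notin T$ and $c_T=\left(w_0-2^e/m\right)/2^m$ for $1\in T$, $|T|=m$, as you claim. Your endgame is also sound: in $\Q_2$ finitely many balls have a common point iff they pairwise intersect, and the pairwise checks you flag as ``the real work'' do close: when $v_2(m)\neq v_2(m')$ the identity $v_2(m'-m)=\min(v_2(m),v_2(m'))$ reduces the cross condition $e\ge m+v_2(m)+v_2(m')-v_2(m'-m)$ to one of the single-index bounds (using $e\ge m'+v_2(m')$ if $m'\le n-1$, or $e\ge (n-1)+v_2(n)$ if $m'=n$), while when $v_2(m)=v_2(m')$ one has $v_2(m'-m)\ge v_2(m)+1$, making the cross condition strictly weaker than $e\ge m+v_2(m)$. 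So your outline completes to a full proof. The trade-off between the two approaches: the paper's intermediate machinery (\cref{orderofmonomial}, \cref{lem:spanlemma}, \cref{corollary: topcyclic}) applies to every Cayley graph of $\FF_2^r$ and is reused for \cref{thm:2ndtopcyclicQn} and the $r=2,3$ classifications, whereas your argument is shorter and avoids both the span lemma and Kummer's theorem entirely, but leans on the full coordinate symmetry of $Q_n$ --- it is that symmetry which makes the M\"obius inversion depend only on $|T|$ and whether $1\in T$ --- so it does not generalize as readily beyond the hypercube.
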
 
We then continue to determine the $2^\text{nd}$ through $(n-1)^\text{th}$ cyclic factors, and conjecture a formula for the $n^\text{th}$ factor (see \cref{thm:2ndtopcyclicQn} and \cref{conj:nthtopcyclicQn}).
Finally, we conclude with some remaining conjectures about the structure of $K(G)$ in Section~5.

\begin{rem} 
\label{importantremark}
\cref{dMnon-generic} was posed as a conjecture in an earlier version of this paper.
Iga et al. proved partial results of the conjecture \cite{IKKY} (see the discussion at the end of \cref{section:2_rank} for more background) before the conjecture was solved in full generality in the current version.
\end{rem}

\subsection{Background and Previous Results} \label{sec:background}

We first define what a Cayley graph is in our context.
Given a multiset of nonzero generators
\[M = \begin{pmatrix}
| &  & | \\
v_1 & \cdots & v_n \\
| &  & |
\end{pmatrix}
\]
of $\FF_2^r$, we form the \emph{Cayley graph} $G = G(\FF_2^r, M)$ with vertex set $V = \FF_2^r$ and multiedge set $\big\{(w, w+v_i)\big\}$ for $w \in V$ and $v_i \in M$.  $G$ is connected because $M$ is a generating set, and there are no loops since all $v_i \neq 0$. As addition is performed in $\FF_2^r$, we also have $w + v_i + v_i = w + 2v_i = w$. Therefore, we can think of this graph as undirected. If we index the matrix representation of $L(G)$ by the binary tuples $u, v \in \FF_2^r$ then we can say that
\[
L(G)_{u,v} = \begin{cases}
n & u = v \\
-(\# \text{ of generators, $v_i$, such that } u + v_i = v) & u \neq v
\end{cases}
\]
since $G$ is an $n$-regular graph. 
\begin{example}
\label{ex:cayleyex}
We give three examples of Cayley graphs in the case $r = 3$.
\begin{enumerate}
    \item The cube $Q_3$ with $M= \begin{pmatrix}
    1 & 0 & 0\\
    0 & 1 & 0\\
    0 & 0 & 1
    \end{pmatrix}$ is shown in \cref{fig:1a}.
    \item Define $Q_{3,\on{vert}}$ to be the Cayley graph with associated generating set 
    \[
    M = \begin{pmatrix}
    1&0&0&0\\
    0&1&0&0\\
    0&0&1&1
    \end{pmatrix}.\] Then the graph is a cube with two sets of vertical edges shown in \cref{fig:1b}.
    \item We can recover the complete graph $K_{8}$ by setting 
    \[
    M = \begin{pmatrix}
    1&0&0&1&0&1&1\\
    0&1&0&1&1&0&1\\
    0&0&1&0&1&1&1
    \end{pmatrix}.
    \]
\end{enumerate}
\end{example}
\begin{figure}[ht!]
\centering
\begin{subfigure}[b]{.4\linewidth}
\centering
\begin{tikzpicture}[scale=2,v/.style={fill, circle, outer sep=0pt, inner sep=0pt, minimum size=4pt}, e/.style={thick}]
  \node[v] (001) at (0,0,1) {};
  \node[v] (011) at (0,1,1) {};
  \node[v] (101) at (1,0,1) {};
  \node[v] (111) at (1,1,1) {};
  \node[v] (000) at (0,0,0) {};
  \node[v] (010) at (0,1,0) {};
  \node[v] (100) at (1,0,0) {};
  \node[v] (110) at (1,1,0) {};
  \draw[e](110)--(010)--(011)--(111)--(110)--(100)--(101)--(001)--(011);
  \draw[e](111)--(101);
  \draw[e](100)--(000)--(010);
  \draw[e](000)--(001);
\end{tikzpicture}
\caption{Hypercube $Q_3$}\label{fig:1a}
\end{subfigure}
\begin{subfigure}[b]{.4\linewidth}
\centering
\begin{tikzpicture}[scale=2,v/.style={fill, circle, outer sep=0pt, inner sep=0pt, minimum size=4pt}, e/.style={thick}]
  \node[v] (001) at (0,0,1) {};
  \node[v] (011) at (0,1,1) {};
  \node[v] (101) at (1,0,1) {};
  \node[v] (111) at (1,1,1) {};
  \node[v] (000) at (0,0,0) {};
  \node[v] (010) at (0,1,0) {};
  \node[v] (100) at (1,0,0) {};
  \node[v] (110) at (1,1,0) {};
  \draw[e](101)--(001)--(000)--(100)--(101);
  \draw[e](111)--(011)--(010)--(110)--(111);
  \draw[e,bend left=15] (111) to (101) to (111);
  \draw[e,bend left=15] (110) to (100) to (110);
  \draw[e,bend left=15] (011) to (001) to (011);
  \draw[e,bend left=15] (010) to (000) to (010);
\end{tikzpicture}
\caption{Cayley Graph with multiedges}\label{fig:1b}
\end{subfigure}
\caption{Examples of Cayley Graphs in dimension $r=3$}\label{fig:1}
\end{figure}
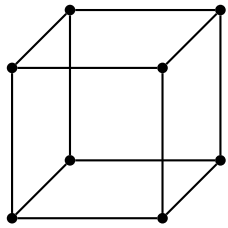
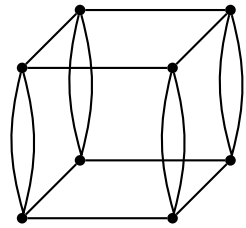

We are interested in $K(G)$ for such graphs.
Kirchhoff's Matrix--Tree Theorem tells us that if we label the eigenvalues of $L(G)$ as $0 = \lambda_1 < \lambda_2 \le \lambda_3 \le \dots \le \lambda_{2^r}$, then $|K(G)| = \frac{1}{2^r}\prod_{i = 2}^{2^r}|\lambda_i|$, which is also the number of spanning trees of $G$ \cite{StanleyBook}. On the other hand, by the structure theorem for finite abelian groups, $K(G) \cong \bigoplus_p \oplus_{e \geq 1} (\Z / p^e \Z)^{m(p^e)}$, where $m(p^e)$ is the multiplicity of $\Z/p^e \Z$ in $K(G)$.
Thus, we can try to determine the group prime by prime in terms of its Sylow subgroups.
We note that, while eigenvalues are useful in the study of $K(G)$ for Cayley graphs $G$, the spectrum of $L(G)$ provides rather little information about $K(G)$ in general; some of these connections can be found in \cite{rushanan}.

We detail some basic properties about the sandpile group of an arbitrary Cayley graph of $\FF_2^r$.
As mentioned in the introduction, much of this generalizes to finite abelian groups with slightly more complicated algebraic formalism, cf. \cite{duceyjalil}.
Fixing $r$, when considering matrices over $\R$, there is an eigenbasis for all of $L(G(\FF_2^r, M))$ simultaneously. 
\begin{definition}
For $u \in \FF_2^r$, define
\[
f_u = \sum_{v \in \FF_2^r} (-1)^{u \cdot v} e_v
\]
where $e_v$ is the standard basis vector $(0, \dots, 0, 1, 0, \dots, 0)^T \in \R^{2^r}$ with the only $1$ in the $v$-coordinate.
\end{definition}

\begin{lemma}\label{changeofbasis}
$e_u$'s can be expressed in terms of $f_v$'s as follows:
\[e_u = \frac{1}{2^{r}} \sum_{v \in \FF_2^r} (-1)^{u \cdot v} f_v.\]
Hence, $\{f_u\}$ is a basis for for $\R^{2^r}$.
In fact, it is an orthogonal eigenbasis for any generating set $M$, with the eigenvalue corresponding to $f_u$ being $\lambda_{u, M} = n - \sum_{i = 1}^n (-1)^{u \cdot v_i}$.
\end{lemma}

\begin{proof}
For a proof of the second statement and a more thorough exposition of the setting of this problem, see \cite[Chapter 2]{StanleyBook}. We only prove the first statement here.

We have $\sum_{v} (-1)^{u \cdot v} f_v=\sum_{v} (-1)^{u \cdot v} \big[\sum_{w} (-1)^{v\cdot w} e_w\big]=\sum_{w} \big[\sum_{v} (-1)^{(u+w)\cdot v}\big] e_w$. For $w=u$, all $(-1)^{(u+w)\cdot v}$'s are 1, so the coefficient of $e_w$ is $2^r$; for $w\neq u$, say they differ over the $i$-th coordinate, then we can pair up the elements of $\FF_2^r$ by fliping their $i$-th coordinate, the sum of $(-1)^{(u+w)\cdot v}$ associated with a pair is 0, so the coefficient of $e_w$ is 0.
\end{proof}

One can use this information to determine the Sylow-$p$ structure for odd primes $p$.
\cref{changeofbasis} implies we can diagonalize $L(G)$ to a matrix $D = \text{diag}(\lambda_{u,M}, u \in \FF_2^r)$ using matrices whose entries are from $\Z\left[\frac{1}{2}\right]$, which provides a matrix equivalence that relates $D$ to the Smith Normal Form of $L(G)$ over $\Z\left[\frac{1}{2}\right]$.
This implies the following proposition.
\begin{proposition}
\label{prop:pnot2}
For any odd prime $p \neq 2$,
\[
\Syl_p K(G) = \Syl_p\left(\bigoplus_{u \in \FF_2^r\setminus \{0\}} \Z / \lambda_{u, M} \Z\right).
\]

\end{proposition}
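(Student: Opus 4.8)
The plan is to diagonalize $L(G)$ over the ring $R = \Z\!\left[\tfrac{1}{2}\right]$ using the eigenbasis $\{f_u\}$, and then argue that inverting $2$ isolates exactly the odd-primary part of the cokernel. First I would assemble the eigenvectors into the change-of-basis matrix $P$ whose columns are the $f_u$, so that $P$ has integer entries $(-1)^{u\cdot v}$. By \cref{changeofbasis}, the inverse map has entries $\tfrac{1}{2^r}(-1)^{u\cdot v}$, which all lie in $R$; equivalently $\det P$ is a power of $2$ and hence a unit in $R$, so $P, P^{-1} \in \on{GL}_{2^r}(R)$. Since $\{f_u\}$ is an eigenbasis with $L(G)f_u = \lambda_{u,M} f_u$, we obtain the factorization $L(G) = P D P^{-1}$ with $D = \diag(\lambda_{u,M})$, valid over $R$.

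Because $P$ and $P^{-1}$ are invertible over $R$, the matrices $L(G)$ and $D$ are equivalent over $R$, and therefore have isomorphic cokernels as $R$-modules:
\[
\coker_R\!\big(L(G)\big) \cong \coker_R(D) = \bigoplus_{u \in \FF_2^r} R/\lambda_{u,M} R.
\]
Here the $u = 0$ summand is free of rank one, since $\lambda_{0,M} = n - \sum_{i}(-1)^{0} = 0$, matching the free $\Z$-summand in $\coker_\Z L(G) \cong \Z \oplus K(G)$. Next I would use that $R$ is a localization of $\Z$ (inverting the multiplicative set $\{2^k\}$), hence flat, so tensoring the integral cokernel with $R$ commutes with taking cokernels:
\[
\coker_R\!\big(L(G)\big) \cong \coker_\Z\!\big(L(G)\big)\otimes_\Z R \cong R \oplus \big(K(G)\otimes_\Z R\big).
\]

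Finally I would compare torsion. For an odd prime $p$, the functor $-\otimes_\Z R$ leaves $p$-primary torsion untouched while annihilating all $2$-primary torsion, since $2$ is a unit in $R$; concretely $\big(\Z/p^e\Z\big)\otimes_\Z R \cong \Z/p^e\Z$ and $\big(\Z/2^e\Z\big)\otimes_\Z R = 0$. Applying this to the finite group $K(G)$ gives $\Syl_p\!\big(K(G)\otimes_\Z R\big) = \Syl_p K(G)$, and applying it summand-by-summand to $\bigoplus_{u\ne 0} R/\lambda_{u,M}R$ identifies its $p$-part with $\Syl_p\!\big(\bigoplus_{u\ne 0}\Z/\lambda_{u,M}\Z\big)$. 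Combining the two displayed isomorphisms and discarding the free summands then yields the claim for all $p \ne 2$. The steps are largely formal once the diagonalization is in place, so the only point demanding real care is the first one: verifying via \cref{changeofbasis} that the change-of-basis matrices are genuinely invertible over $R$ (so that conjugation preserves the integral-module cokernel up to inverting $2$) and checking that this localization destroys only the $2$-torsion, which is precisely why the method says nothing about $\Syl_2 K(G)$.
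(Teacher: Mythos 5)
Your proposal is correct and follows essentially the same route as the paper: the paper also defines $R = \Z\left[\frac{1}{2}\right]$, uses \cref{changeofbasis} to diagonalize $L(G)$ over $R$ to $D = \diag(\lambda_{u,M})$, and concludes the proposition from there. Your write-up simply fills in the details the paper leaves implicit (invertibility of $P$ over $R$, flatness of localization, and the effect of inverting $2$ on torsion), all of which check out.
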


Thus, we have a nice description of the Sylow-$p$ subgroups for $p \neq 2$ in terms of the eigenvalues, but the Sylow-$2$ subgroups exhibit a more irregular structure.
\begin{example}
These are the sandpile groups for our 3 examples in the case $r = 3$:
\begin{enumerate}
    \item The eigenvalues are $(0,2,2,2,4,4,4,6)$ and $K(Q_3) = \Z/2\Z \oplus (\Z/8\Z)^2 \oplus \Z/3\Z$
    \item The eigenvalues are $(0,2,2,4,4,6,6,8)$ and $K(Q_{3,\on{vert}})  = (\Z/4\Z)^2 \oplus \Z/16\Z \oplus (\Z/3\Z)^2$,
    \item The eigenvalues are $(0,8,8,8,8,8,8)$ and $K(K_{8}) = (\Z/8\Z)^6$.
\end{enumerate}
\end{example}

In order to deal with the case $p = 2$, we adopt the approach of Benkart, Klivans, and Reiner \cite{reinerquiver} that gives a natural ring structure on $K(G)$:

\begin{proposition}
\label{cokerIso}
For $G=G(\FF_2^r,M)$, we have an isomorphism of $\Z$-modules
\begin{align*}\operatorname{coker }  L(G) \cong K(G) \oplus \Z \cong  \Z[x_1, x_2, \dots, x_r] / \bigg(x_1^2 - 1, \dots, x_r^2 - 1, n - \sum_{i = 1}^n \prod_{j = 1}^r x_j^{(v_i)_j}\bigg).
\end{align*}
The isomorphism is given by $e_v\leftrightarrow \prod_{j=1}^r x_j^{(v)_j}$ for all $v\in\FF_2^r$. We denote the ideal $\bigg(x_1^2 - 1, \dots, x_r^2 - 1, n - \sum_{i = 1}^n \prod_{j = 1}^r x_j^{(v_i)_j}\bigg)$ by $I(G)$.
\end{proposition}
\begin{proof}
First, there is an isomorphism of $\Z$-modules
\[\Z^{2^r}\cong \Z[x_1,\dots,x_r]/(x_1^2 - 1, \dots, x_r^2 - 1)\]
under the map $e_v\leftrightarrow \prod_{j=1}^r x_j^{(v)_j}$ for all $v\in\FF_2^r$. To prove the proposition, it suffices to examine the image of $\operatorname{im }L(G)$ under this isomorphism, and show that it is the ideal generated by $n - \sum_{i = 1}^n \prod_{j = 1}^r x_j^{(v_i)_j}$. Based on the definition of $L(G)$, the Laplacian maps $e_u$ to $n\cdot e_u-\sum_{i=1}^n e_{u+v_i}$. Under our isomorphism, this implies that $L(G)$ maps monomial $\prod_{j=1}^r x_j^{(u)_j}$ to
\[n\prod_{j=1}^r x_j^{(u)_j}-\sum_{i = 1}^n \prod_{j = 1}^rx_j^{(u)_j+(v_i)_j}=\prod_{j=1}^r x_j^{(u)_j}\left(n - \sum_{i = 1}^n \prod_{j = 1}^r x_j^{(v_i)_j}\right).\]
As a result, the image of $\operatorname{im }L(G)$ in $\Z[x_1,\dots,x_r]/(x_i^2-1)$ is the ideal generated by $n - \sum_{i = 1}^n \prod_{j = 1}^r x_j^{(v_i)_j}$.
\end{proof}

\begin{rem}
As \cite{reinerquiver} does not explicitly use the language of Cayley graphs, we explain how to deduce \cref{cokerIso} from its results, and use this opportunity to explain the representation theoretic background of it.

In the paper, the authors considered the notion of a critical group attached to a faithful representation $\gamma$ of a finite group $\Gamma$, which is the torsion part of the cokernel of the {\em extended McKay--Cartan matrix} associated with $\gamma$: the rows and columns of the matrix are indexed by the irreducible (complex) representations of $\Gamma$, with $\dim\gamma$ on the diagonal entries, and each off-diagonal $(\alpha,\beta)$-entry equals minus the multiplicity of $\beta$ in $\gamma\otimes\alpha$. This group is isomorphic to (the additive group of) the quotient of the {\em virtual representation ring} of $\Gamma$ \cite[Proposition~5.22]{reinerquiver}, where its elements correspond to the complex representations of $\Gamma$, with addition and multiplication coming from taking direct sum and tensor product of representations, respectively.

In our case, the variable $x_i$ corresponds to the irreducible representation $\FF_2^r\rightarrow\mathbb{C}$ given by $(b_1,\ldots,b_r)\mapsto (-1)^{b_i}$, so all irreducible representations of $\FF_2^r$ are in bijection with the squarefree monomials in $x_i$'s. The map $e_v\leftrightarrow \prod_{j=1}^r x_j^{(v)_j}$ then gives a bijection between elements of $\FF_2^r$ and the irreducible representations of the very group.
Under this identification, we can consider the representation $\bigoplus_{i=1}^n e_{v_i}\leftrightarrow \sum_{i = 1}^n \prod_{j = 1}^r x_j^{(v_i)_j}$ of $\FF_2^r$, which is faithful as $v_i$'s generate $\FF_2^r$ by assumption. It is routine to check that the Laplacian of $G(\FF_2^r, M)$ is exactly the extended McKay--Cartan matrix associated with this representation, and \cref{cokerIso} follows from \cite[Proposition~5.22]{reinerquiver}.
\end{rem}

\begin{example}
In the case $G = Q_n$, this polynomial ring is 
\[
\Z[x_1,,x_2,\dots, x_n]/(x_1^2-1, x_2^2-1, \dots, x_n^2-1, x_1 + x_2 + \dots + x_n-n).
\]
The ring and the ideal $I(Q_n)=(x_1^2-1, x_2^2-1, \dots, x_n^2-1, x_1 + x_2 + \dots + x_n-n)$ are symmetric under the action of $S_n$, which is an important fact that we use later.
\end{example}

The two descriptions of the sandpile group $K(G)$ in \cref{cokerIso} each has its own advantages: the {\em matrix-theoretic} definition allows us to apply linear algebra techniques, and the {\em representation-theoretic (ring-theoretic)} definition has a special multiplicative structure. We will switch between the two descriptions throughout this paper. 

\begin{rem}
It is important to note the sandpile group $K(G(\FF_2^r, M))$ is invariant under left multiplication by elements of $\text{GL}_r(\FF_2)$.
That is, given $ T \in \text{GL}_r(\FF_2)$, we have $K(G(\FF_2^r, M)) \cong K(G(\FF_2^r, T\cdot M))$.
This is because $M$ and $T\cdot M$ form Cayley graphs that are isomorphic via the bijective map ${\bf u}\mapsto T\cdot {\bf u},\forall {\bf u}\in\FF_2^r$ of vertices.
As a result, we are free to replace a given $M$ by any representative from the $\text{GL}_r(\FF_2)$-orbit of $M$.
\end{rem}

{\bf Convention:} It is often convenient to label the elements of $\FF_2^r$ as $w_0,\ldots,w_{2^r-1}$, where the index of the vector $(d_1,\ldots,d_r)^T$ is the binary number $\overline{d_1\ldots d_r}$.
For the generator set $M$, we denote the multiplicity of $w_i$ by $\mu_i$, so $n=\sum_{i=0}^{2^r-1} \mu_i$. In \cref{topcyclicQnsection}, we also label the elements of $\FF_2^r$ as $w_S$ for all subsets $S\subset [r]$, where $w_S$ has entry $1$ at indices $s\in S$ and $0$ elsewhere. For example, when $r=5$, $w_{\{1,3,4\}}=(1,0,1,1,0)^T\in \FF_2^r$.
\section{The Number of 2-Cyclic Factors} \label{section:2_rank}

In this section we wish to compute the number of $2$-cyclic factors appearing in the sandpile group for $G = G(\FF_2^r, M)$. Given a sandpile group
\[
K(G) \cong \bigoplus_{p} \bigoplus_{e \geq 1} (\Z / p^e \Z)^{m(p^e)},
\]
we can tensor with $\Z/2\Z$ to get
\[
K(G) \otimes (\Z / 2\Z) \cong \bigoplus_{e \geq 1} (\Z / 2^e \Z)^{m(2^e)} \otimes \Z / 2\Z \cong (\Z / 2\Z)^{\sum_{e}m(2^e)},
\]
where we used that $(\Z/p^e \Z) \otimes (\Z/2\Z) = 0$ for $p \neq 2$ and $(\Z / 2^e \Z) \otimes (\Z/2\Z) = \Z/2\Z$ for all $e \geq 1$. We define
\[
d(M):= \sum_{e \geq 1} m(2^e),
\]
which is the number of even invariant factors.

As stated in the introduction, we establish a dichotomy of Cayley graphs in terms of their values of $d(M)$.
First, we define the genericity condition on the generating set $M$ that only depends on the parity of $\mu_i$'s:
\begin{definition}
\label{definition: generic}
We say that $M$ is \textbf{generic} if 
\[
\sum_{i = 1}^{n} v_n \neq \vec{0}.
\]
\end{definition}

\begin{example}
In the examples for $r = 3$ in \cref{ex:cayleyex}, $Q_3$ and $Q_{3,\on{vert}}$ are generic, but $K_8$ is \textbf{not} generic.
\end{example}

We prove a few useful observations.

\begin{lemma} \label{lem:even_equal_2rank}
$d(M)$ equals $2^r-1$ minus the rank of $L(G(\FF_2^r,M))$ over $\FF_2$.
\end{lemma}

\begin{proof}
Tensoring the exact sequence $\Z^{2^r}\xrightarrow{L}\Z^{2^r}\rightarrow K(G)\oplus\Z\rightarrow 0$ with $\Z/2\Z$ yields $(\Z/2\Z)^{2^r}\xrightarrow{L}(\Z/2\Z)^{2^r}\rightarrow (\Z/2\Z)^{d(M)+1}\rightarrow 0$, hence $(\Z/2\Z)^{2^r}\cong \Img L\oplus(\Z/2\Z)^{d(M)+1}$.
Taking the dimension of both sides proves the lemma.
\end{proof}

\begin{proposition}
\label{prop: multiplicitiesindependentmod2}
$d(M)$ only depends on the parity of the multiplicities $\mu_i$'s.
\end{proposition}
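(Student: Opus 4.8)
The plan is to reduce everything to a single rank computation over $\F_2$. By the tensoring computation just made, $d(M) = \dim_{\F_2}(K(G) \otimes \Z/2\Z)$, so it suffices to show this dimension is unchanged when the multiplicities $\mu_u$ are altered subject to fixing each parity $\mu_u \bmod 2$. First I would pass from $K(G)$ to the full cokernel: since $\coker L(G) \cong \Z \oplus K(G)$ by \cref{cokerIso}, tensoring with $\Z/2\Z$ gives $\coker L(G) \otimes \Z/2\Z \cong \Z/2\Z \oplus (K(G)\otimes \Z/2\Z)$, whence $d(M) = \dim_{\F_2}\big(\coker L(G)\otimes \Z/2\Z\big) - 1$.

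Next, because $-\otimes \Z/2\Z$ is right exact, applying it to the presentation $\Z^{2^r} \xrightarrow{L(G)} \Z^{2^r} \to \coker L(G) \to 0$ identifies $\coker L(G)\otimes \Z/2\Z \cong \coker \bar L$, where $\bar L$ denotes the reduction of $L(G)$ modulo $2$ viewed as a map $\F_2^{2^r} \to \F_2^{2^r}$. Therefore $d(M) = \big(2^r - \on{rank}_{\F_2} \bar L\big) - 1$, and the entire problem collapses to showing that $\on{rank}_{\F_2}\bar L$ depends only on the residues $\mu_u \bmod 2$.

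The crux is that $\bar L$ \emph{itself} already depends only on these parities. Indexing rows and columns by $\FF_2^r$, the off-diagonal entry $(\bar L)_{u,v}$ for $u \neq v$ equals $-m(u,v) \equiv m(u,v) = \mu_{u+v} \pmod 2$, while each diagonal entry equals $n = \sum_{w} \mu_w \equiv \sum_{w} \mu_w \pmod 2$. Every entry of $\bar L$ is thus a function of the residues $\mu_u \bmod 2$ alone, so replacing the multiplicities by any others with the same parities leaves $\bar L$ — and in particular its $\F_2$-rank — unchanged, giving the claim at once.

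I do not anticipate a serious obstacle; the only points needing care are the right-exactness step that lets one commute ``cokernel'' past reduction mod $2$, and the bookkeeping that both the diagonal degree $n$ and the off-diagonal edge multiplicities $m(u,v) = \mu_{u+v}$ are captured by the parities. If one preferred to bypass \cref{cokerIso}, the identical argument runs directly through the Smith normal form: the number of even invariant factors of $L(G)$ is exactly $2^r - \on{rank}_{\F_2}\bar L$, and this rank is manifestly a parity invariant.
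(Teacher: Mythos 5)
Your proof is correct, but it takes a genuinely different route from the paper's. The paper proves this by tensoring the \emph{polynomial ring} presentation of $\Z \oplus K(G)$ from \cref{cokerIso} with $\Z/2\Z$: over $\F_2$ the relation $n - \sum_i \mu_i \prod_j x_j^{(v_i)_j}$ visibly depends only on the residues $\mu_u \bmod 2$, so the two quotient algebras coincide and hence so do $K(G)\otimes\Z/2\Z$ and $K(G')\otimes\Z/2\Z$. You instead bypass the ring structure entirely and work with the Laplacian matrix itself: right-exactness of $-\otimes\Z/2\Z$ gives $\coker L(G)\otimes \Z/2\Z \cong \coker \bar L$, and the entries of $\bar L$ (diagonal $n=\sum_w \mu_w$, off-diagonal $\mu_{u+v}$) are manifestly parity functions, so $d(M) = 2^r - \on{rank}_{\F_2}\bar L - 1$ is a parity invariant. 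Both arguments share the same core idea --- reduce a presentation mod $2$ and observe it only sees parities --- but yours is more elementary (it needs only the splitting $\coker L(G)\cong \Z\oplus K(G)$ and standard linear algebra over $\F_2$, and in fact proves the general statement that the mod-$2$ reduction of any integer matrix determines its number of even invariant factors), whereas the paper's ring-theoretic formulation is not gratuitous: the change of variables $u_i = x_i - 1$ in that same $\F_2$-algebra is exactly what drives the subsequent proofs of \cref{thm:dMgenericc} and \cref{prop:evenswitchtoodd}, so the paper's proof doubles as setup for those results. One tiny imprecision in your closing remark: the count $2^r - \on{rank}_{\F_2}\bar L$ of ``even invariant factors'' includes the zero diagonal entry of the Smith normal form (the free $\Z$ summand), which is why the $-1$ in your main formula is needed; as stated the parenthetical alternative conflates the two counts, though this does not affect the argument.
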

\begin{proof}
The rank of $L(G(\FF_2^r,M))$ over $\FF_2$ only depends on the parity of its entries, which only depends on the parity of $\mu_i$'s.
The statement follows from \cref{lem:even_equal_2rank}.
\end{proof}

\begin{proposition}
Let $(\mu_1,\dots, \mu_{2^{r} - 1})$ be the multiplicities of the generators associated to $M$, and assume not all $\mu_{i}$ have the same parity. Let $M'$ be the generating set with multiplicities $(\mu_1 + 1, \dots, \mu_{2^r -1} + 1)$. Then $d(M) = d(M')$.
\label{prop:evenswitchtoodd}
\end{proposition}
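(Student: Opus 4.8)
The plan is to combine the ring description of \cref{cokerIso} with the parity reduction of \cref{prop: multiplicitiesindependentmod2}, and then to recognize the ``add one to every multiplicity'' operation as \emph{adding a single distinguished element} to the Laplacian relation inside a small local ring. Concretely, since $d(M) = \dim_{\FF_2}\big((\Z\oplus K(G))\otimes\FF_2\big) - 1$ depends only on the parities of the multiplicities, I would work in $A := \FF_2[x_1,\dots,x_r]/(x_1^2-1,\dots,x_r^2-1) \cong \FF_2[u_1,\dots,u_r]/(u_1^2,\dots,u_r^2)$ under $u_i = x_i+1$, where the mod-$2$ reduction of the Laplacian relation is an element $\ell_\epsilon = \bar n\,x^0 + \sum_{w\neq 0}\bar a_w\,x^w$ (with $x^w=\prod_j x_j^{w_j}$) depending only on the parity vector $\epsilon$. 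Passing to $M'$ raises every $a_w$ by $1$ and raises $n$ by $2^r-1\equiv 1$, so the new relation is $\ell_{\epsilon'} = \ell_\epsilon + S$, where $S := \sum_{w\in\FF_2^r} x^w = \prod_{j=1}^r(1+x_j) = u_1u_2\cdots u_r$ is the top monomial of $A$.

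The crux is the following observation about $A$: the element $S=u_1\cdots u_r$ spans the socle and lies in \emph{every} nonzero principal ideal. I would prove this by a direct monomial computation: writing $\ell = \sum_{T\subseteq[r]} c_T u_T$ with $u_T=\prod_{j\in T}u_j$, pick a minimal $T_0$ with $c_{T_0}\neq 0$ and multiply by $u_{[r]\setminus T_0}$. Terms with $T\not\subseteq T_0$ vanish because $u_j^2=0$, and terms with $T\subsetneq T_0$ vanish because $c_T=0$ by minimality, leaving $\ell\cdot u_{[r]\setminus T_0} = c_{T_0}u_{T_0}u_{[r]\setminus T_0} = c_{T_0}\,u_{[r]} = S$. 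Hence $S\in(\ell)$ whenever $\ell\neq 0$. Applying this to both $\ell_\epsilon$ and $\ell_{\epsilon'}=\ell_\epsilon+S$ gives $S\in(\ell_\epsilon)$ and $S\in(\ell_\epsilon+S)$, from which $\ell_\epsilon+S\in(\ell_\epsilon)$ and $\ell_\epsilon=(\ell_\epsilon+S)+S\in(\ell_\epsilon+S)$; therefore $(\ell_\epsilon)=(\ell_\epsilon+S)$ as ideals, the quotient rings are literally identical, and $d(M)=d(M')$.

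The only thing left — and the step where the hypothesis is genuinely used — is to rule out the degenerate ideals. The argument above needs $\ell_\epsilon\neq 0$ and $\ell_\epsilon+S\neq 0$, i.e.\ $\ell_\epsilon\notin\{0,S\}$. A short check shows $\ell_\epsilon=0$ exactly when all $a_w$ are even, and $\ell_\epsilon=S$ exactly when all $a_w$ are odd (in the latter case $\bar n = 2^r-1\equiv1$ and $\sum_{w\neq0}x^w = S+1$, so $\ell_\epsilon = 1+(S+1)=S$). Thus ``not all $a_{v_i}$ have the same parity'' is precisely the condition excluding both degenerate cases, and the proof goes through. I expect the main obstacle to be the identification of $S$ with the socle generator together with the clean minimal-monomial argument that $S$ sits in every nonzero principal ideal; once that is in hand, the equality of ideals, and hence of $d$, is immediate.
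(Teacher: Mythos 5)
Your proof is correct and takes essentially the same route as the paper's: reduce mod $2$ to $\FF_2[u_1,\dots,u_r]/(u_1^2,\dots,u_r^2)$, identify the change in the Laplacian relation as adding the top monomial $u_1\cdots u_r$, and extract that monomial from the principal ideal by multiplying a minimal monomial of the relation by its complementary monomial, which is exactly the paper's key computation. Your presentation is marginally cleaner (the product formula $\sum_{w}x^w=\prod_j(1+x_j)$ instead of subset counting, and the explicit exclusion of the degenerate cases $\ell_\epsilon\in\{0,S\}$), but the underlying argument is the same.
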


In the following proof, we use the term {\em support} in two different senses, which should be clear from the context.
For a vector $w_i \in \FF_2^r$, its support is the set $S_i = \{j : (w_i)_j = 1\}$; for a polynomial, its support consists of all monomials whose coefficients are nonzero.

\begin{proof}
Apply the change of variables $u_i := x_i - 1$ to the presentation in \cref{cokerIso}, and tensor the ring with $\Z/2\Z$.
We thus obtain the ring
\[
R := (\Z/2\Z)[u_1,\dots, u_r]/\left(u_1^2,\dots, u_r^2, p(u_1,u_2,\dots, u_r):= n - \sum_{i = 1}^{2^r - 1} \mu_i \prod_{j = 1}^r (u_j+1)^{(w_i)_j}\right),
\]
which is isomorphic to $(K(G) \oplus \Z)\otimes(\Z/2\Z)$.

For $M'$, we similarly obtain the ring
\[
R' := (\Z/2\Z)[u_1, \dots, u_r]/(u_1^2, \dots, u_r^2, p'(u_1, \dots, u_r)),
\]
where $p'(u_1, \dots, u_r) = n+(2^r-1)-\sum_{i = 1}^{2^r - 1} (\mu_i+1)\prod_{j = 1}^r (u_j+1)^{(w_i)_j}$.

We want to show that $p'(u_1,\ldots,u_r)\in I(G)$ and $p(u_1,\ldots,u_r)\in I(G')$, as they imply $I(G)=I(G')$, hence $R=R'$ and $d(M)= \dim_{\Z/2\Z}R - 1=\dim_{\Z/2\Z}R' - 1=d(M')$.
We claim that $p(u_1,\dots u_r) - p'(u_1,\dots, u_r) = 1 + \sum_{i = 1}^{2^r - 1}\prod_{j = 1}^{r}(u_j + 1)^{(w_i)_j} \in I(G) \text{ and } I(G')$, which would give the desired containment.
Set $u^K = \prod_{k \in K}u_k $. Then expanding yields
\[
\prod_{j = 1}^r(u_j + 1)^{(w_i)_j} = \sum_{K \subseteq S_i} \prod_{k \in K} u_k = \sum_{K \subseteq S_i} u^K.
\]
From this we get
\begin{align*}
1+\sum_{i = 1}^{2^r - 1} \prod_{j = 1}^r (u_j + 1)^{(w_i)_j} &= 1+ \sum_{i = 1}^{2^r - 1} \sum_{K \subseteq S_i} u^K\\
&= 1+\sum_{K \subseteq \{1, \dots, r\}} \#( i : K \subseteq S_i) \cdot u^K\\ 
&= 
(1+(2^r-1))+
\sum_{K \subseteq \{1, \dots, r\}, K\neq\emptyset} 2^{r - |K|} \cdot u^K\\
&= u_1\ldots u_r.
\end{align*}
Thus it remains to show that $u_1\dots u_r\in I(G)$; since our only assumption on $G$ is that $\mu_i$'s do not have the same parity, which $G'$ satisfies by construction, the argument below also shows $u_1\dots u_r\in I(G')$.

From the assumption on $\mu_i$'s, $p(u_1,\ldots,u_r)$ contains some monomial that is not $u_1\ldots u_r$: if $\mu_{2^r-1}$ is odd, then we pick, among all $w_j$'s whose multiplicity is even, any $w_i$ whose $S_i$ is inclusion-maximal.
There are $2^{r-|S_i|}-1$ elements in $\FF_2^r$ whose supports properly contain $S_i$, and their multiplicities are odd.
Since $\prod_{j=1}^r (u_j + 1)^{(w_{i'})_j}$ contains the monomial $u^{S_i}$ if and only if $S_i\subset S_{i'}$, $u^{S_i}$ is a monomial in the support of $p(u_1,\ldots,u_r)$.
The case when $\mu_{2^r-1}$ is even follows from an analogous argument.

Next we pick, among all $w_j$'s such that $u^{S_j}$ is in the support of $p(u_1,\ldots,u_r)$, any $w_i$ whose $S_i$ is inclusion-minimal.
In particular, for any other $w_j$ such that $u^{S_j}$ is in the support of $p(u_1,\ldots,u_r)$, $S_j\setminus S_i$ is non-empty, and $u^{S_j}\cdot\frac{u_1 \cdots u_r}{u^{S_i}}$ is not squarefree.
Now
\[
I(G)\ni p(u_1,\ldots,u_r)\cdot\frac{u_1\ldots u_r}{u^{S_i}}
= u_1 \cdots u_r + [p(u_1, \dots, u_r) - u^{S_i}]\cdot\frac{u_1 \cdots u_r}{u^{S_i}},
\]
using the fact that every term in $[p(u_1, \dots, u_r) - u^{S_i}]\cdot\frac{u_1 \cdots u_r}{u^{S_i}}$ is the multiple of some $u_j^2$ hence in $I(G)$, we have $u_1\ldots u_r\in I(G)$ as wanted.
\end{proof}  

\begin{example}
Clearly \cref{prop:evenswitchtoodd} follows from \cref{dMgeneric} in the generic case, since adding multiplicity one to each vector does not change the sum.
Consider then the case 
\setcounter{MaxMatrixCols}{20}
\begin{equation*}
M = \begin{pmatrix}
1 & 0 & 0 & 1\\
0 & 1 & 0 & 1\\
0 & 0 & 1 & 1
\end{pmatrix}\\
\end{equation*}
\begin{equation*}
    N = \begin{pmatrix}
1&1&0&0&0&0&1&1&1&0&1\\
0&0&1&1&0&0&1&1&1&1&0\\
0&0&0&0&1&1&1&1&0&1&1
\end{pmatrix}
\end{equation*}
In these cases, a simple Sage computation tells us that 
\begin{align*}
K(G(\mathbb{F}_2^3,M)) &= (\Z/4\Z)^4 \oplus \Z/16\Z\\
K(G(\mathbb{F}_2^3,N)) &= (\Z/4\Z)^4 \oplus \Z/32\Z \oplus (\Z/3\Z)^6.
\end{align*}
Even though these two cases do not have the same number of invariant factors, they have the same number of \emph{Sylow}-2 invariant factors.
\end{example}

\begin{rem}
In linear algebra terms, \cref{prop:evenswitchtoodd} states that if one changes the parity of every entry of $L(G(\FF_2^r,M))$, the new matrix is of the same rank over $\FF_2$.
However, \cref{prop:evenswitchtoodd} is not quite a straightforward linear algebra result as it not true for integer matrices in general.
Indeed, even within the scope of this paper, the Laplacian of $K_4\cong G(\FF_2^2,({\bf e}_1,{\bf e}_2,{\bf e}_1+{\bf e}_2))$ shows that the parity assumption on $\mu_i$'s is necessary, and the ring theoretic proof is justified.
\end{rem}

\subsection{Generic Cases}

We now prove \cref{dMgeneric}, which we restate here:
\begin{theorem}

Suppose that $M$ is generic. Then the number of Sylow-2 cyclic factors of $K(G(\FF_2^r,M))$ is $2^{r-1} - 1$.
\label{thm:dMgenericc}
\end{theorem}
\begin{proof}
We want to find the dimension of 
\[
(K(G) \oplus \Z)\otimes(\Z/2\Z) \cong (\Z/2\Z)[x_1, \dots, x_r]/\left(x_1^2 - 1, \dots, x_r^2 - 1, n - \sum_{i = 1}^n\prod_{j= 1}^rx_j^{(v_i)_j} \right)
\]
as a vector space over $\Z/2\Z$.

Again making the change of variables $u_i := x_i - 1$ yields
\[
(K(G) \oplus \Z)\otimes(\Z/2\Z)\cong R:=(\Z/2\Z)[u_1, \dots, u_r]/\left(u_1^2, \dots, u_r^2, p(u_1,\dots, u_r):=n - \sum_{i = 1}^n\prod_{j = 1}^r(u_j + 1)^{(v_i)_j}\right).
\]
Since $\sum_{i=1}^n v_i \neq 0$, without loss of generality, $\sum_{i=1}^n (v_i)_r = 1$. As a result, the coefficient of $u_r$ in $p(u_1,\dots, u_r)$ is 1. $p(u_1,\dots, u_r)$ has trivial constant term, so we can rewrite $p(u_1, \dots, u_r) =   u_r \cdot f - g + (\text{non-squarefree terms})$, where $f,g$ are polynomials in $u_1,\ldots,u_{r-1}$ and $f$ has constant term $1$. Since $f$ has nonzero constant term and  $u_i^2 \equiv 0,\forall i$, we in fact have $f^2 \equiv 1$, so $f$ is invertible and $u_r \equiv fg$ in $R$. We can now construct a ring homomorphism
\[
T: R \to (\Z/2\Z)[u_1, \dots, u_{r-1}]/(u_1^2,\dots, u_{r-1}^2)
\]
by mapping $u_t \mapsto u_t$ for $t < r$ and $u_r \mapsto fg$.
$T$ is surjective for every monomial in $(\Z/2\Z)[u_1, \dots, u_{r-1}]$ is the image of the same monomial in $(\Z/2\Z)[u_1, \dots, u_r]$.

As a vector space over $\Z/2\Z$, the dimension of $(\Z/2\Z)[u_1, \dots, u_{r-1}]/(u_1^2,\dots, u_{r-1}^2)$ is $2^{r-1}$ as the standard monomials (with respect to any monomial order) are precisely the squarefree monomials in $u_1,\ldots, u_{r-1}$.
On the other hand, we claim that $\dim_{\Z/2\Z} R \le 2^{r-1}$ by showing that the standard monomials must be squarefree and $u_r$-free with respect to the lexicographic order, with $u_r$ being the largest variable: squarefree-ness is routine; for any monomial of the form $u_r\cdot u$ where $u$ is a squarefree monomial in $u_1,\ldots,u_{r-1}$, consider $(uf)p =(u_r\cdot u)f^2-ufg+uf({\rm non-squarefree\ terms})\in I(G)$, every term other than $u_r\cdot u$ is either not squarefree or does not contain $u_r$, so $u_r\cdot u$ is the leading term of [$(uf)p$ subtracting the non-squarefree terms], which is still in $I(G)$.
Thus, the map $T$ must be an isomorphism by comparing dimensions. Therefore, $(\Z \oplus K(G)) \otimes \Z/2\Z \cong (\Z/2\Z)^{2^{r-1}}$ as vector spaces, and $K(G)$ has $(2^{r-1} - 1)$ Sylow-$2$ cyclic factors.\end{proof}

\subsection{Non-generic Cases}

\noindent In the non-generic cases, the last generator of $I(G)$ no longer has a degree $1$ term, so we cannot construct the isomorphism in the proof above. Nevertheless, we in turn prove that \cref{dMgeneric} actually {\em characterizes} generic sets of generators.
The following theorem restates \cref{dMnon-generic} with a stronger bound when $n$ is not too big compared to $r$.

\begin{theorem} \label{thm:dMnon-genericc}
Suppose that $M$ is not generic. Then the number of Sylow-2 cyclic factors of $K(G(\FF_2^{r}, M))$ is at least $2^{r-1} - 1 + 2^{r-\lceil n/2\rceil}$; when $r<\lceil n/2\rceil$, the inequality simply states that the number is at least $2^{r-1}$.
\end{theorem}

For the rest of this section, we consider the purely linear algebraic point of view of the problem as in \cref{lem:even_equal_2rank}, although it would also be interesting to find a Gr\"{o}bner theoretic proof as in the generic case.
In particular, we view every matrix of interest as a matrix over $\FF_2$ and we need to prove that the rank of the Laplacian is at most $2^{r-1}-2^{r-\lceil n/2\rceil}$.

The simple but essential observation is that every Cayley graph of $\FF_2^r$ can be considered as the quotient of some hypercube.

\begin{definition}
Let $G=G(\FF_2^r,M)$ be a Cayley graph and $U\subset\FF_2^r$ be a subspace, such that $U\cap M=\emptyset$.
The {\em quotient Cayley graph} of $G$ by $U$, denoted by $G/U=G(\FF_2^r/U,M)$, is the graph with vertex set $\FF_2^r/U$ and multiedge set $\big\{(w+U, v_i+w+U)\big\}$ for coset $w+U$ and $v_i\in M$.

In other words, for any coset $X$ and $Y$, fix element ${\bf x}\in X$, then every edge between two vertices ${\bf x}\in X,{\bf y}\in Y$ in $G$ descends to an edge between $X,Y$ in $G/U$.
\end{definition}

\begin{lemma} \cite[Proposition~37]{IKKY} \label{lem:Cay_equal_HCq}
Let $M$ be the matrix whose columns are the generators of the Cayley graph.
Then $G(\FF_2^r,M)\cong Q_n/\ker M$ as graphs.
\end{lemma}

By definition, $M$ is non-generic if and only if the all one vector ${\bf 1}$ is contained in $\ker M$, and we can construct any non-generic Cayley graph of $\FF_2^r$ from $Q_n/\langle{\bf 1}\rangle$ by taking successive codimensional one quotients. This allows us to prove \cref{thm:dMnon-genericc} by induction.

\begin{proposition} \label{prop:2_rank_base}
Let $r\geq 2$ and $M_r = \begin{pmatrix}
{\bf e}_1 & \cdots & {\bf e}_r & {\bf e}_1+\ldots+{\bf e}_r
\end{pmatrix}.$
Then the rank of $L(G(\FF_2^r,M_r))$ is equal to $2^{r-1}-2^{\lfloor (r-1)/2\rfloor}$.
\end{proposition}

\begin{proof}
We prove by induction on $r$.
For $r=2$ and respectively $r=3$, the corresponding Cayley graph is $K_4$ and respectively $K_{4,4}$, their Laplacians' ranks are $1$ and $2$, respectively.

To ease the notation, we order the rows and columns of the Laplacian in the order of $w_0,\ldots,w_{2^r-1}\in\FF_2^r$.
Denote by $L$ the Laplacian of $Q_r$ {\em plus} the ($2^r\times 2^r$) identity matrix $I$, and denote by $\overline{L}$ and $\overline{I}$ the matrices obtained from $L$ and $I$ by reversing the ordering of the rows.
Note that since $L$ and $I$ are also symmetric with respect to the main skew diagonal, $\overline{L}$ and $\overline{I}$ can be obtained from $L$ and $I$ by reversing the ordering of their columns as well.
Now the Laplacian of $G(\FF_2^{r+2},M_{r+2})$ has the block structure
$
\begin{pmatrix}
L & I & I & \overline{I}\\
I & L & \overline{I} & I\\
I & \overline{I} & L & I\\
\overline{I} & I & I & L
\end{pmatrix}.
$

We then perform the following row and column operations in blocks.
Here $C\leftarrow C+\overline{C'}$ means we add the $(2^r-i+1)$-th column of the block $C'$ to the $i$-th column of $C$, the meanings of $R\leftarrow R+\overline{R'}, R\leftarrow \overline{R}, C\leftarrow\overline{C}$ are similar.

\begin{gather*}
\begin{pmatrix}
L & I & I & \overline{I}\\
I & L & \overline{I} & I\\
I & \overline{I} & L & I\\
\overline{I} & I & I & L
\end{pmatrix}
\xrightarrow[C_3\leftarrow C_3+C_2, C_4\leftarrow C_4+C_1]{R_3\leftarrow R_2+R_3, R_4\leftarrow R_4+R_1}
\begin{pmatrix}
L & I & 0 & L+\overline{I}\\
I & L & L+\overline{I} & 0\\
0 & L+\overline{I} & 0 & 0\\
L+\overline{I} & 0 & 0 & 0
\end{pmatrix}\\
\xrightarrow[C_1\leftarrow C_1+\overline{C_3},C_2\leftarrow C_2+C_3,C_4\leftarrow C_4+\overline{C_3}]{R_2\leftarrow R_2+\overline{R_1}}
\begin{pmatrix}
L & I & 0 & L+\overline{I}\\
0 & 0 & L+\overline{I} & 0\\
0 & L+\overline{I} & 0 & 0\\
L+\overline{I} & 0 & 0 & 0
\end{pmatrix}\\
\xrightarrow[C_2\leftarrow C_2+\overline{C_1}]{R_1\leftarrow R_1+\overline{R_3}, R_4\leftarrow R_4+\overline{R_3}}
\begin{pmatrix}
L & 0 & 0 & L+\overline{I}\\
0 & 0 & L+\overline{I} & 0\\
0 & L+\overline{I} & 0 & 0\\
L+\overline{I} & 0 & 0 & 0
\end{pmatrix}.
\end{gather*}

The matrix $L+\overline{I}$ is the Laplacian of $G(\FF_2^r,M_r)$, hence the second and third block of columns each has rank $2^{r-1}-2^{\lfloor (r-1)/2\rfloor}$ by induction hypothesis.
For the first and forth block of columns, we have the $2^{r+1}\times 2^{r+1}$ nonzero submatrix $
\begin{pmatrix}
L &  L+\overline{I}\\
L+\overline{I} & 0
\end{pmatrix},
$
in which we can perform further row and column operations:
$
\begin{pmatrix}
L &  L+\overline{I}\\
L+\overline{I} & 0
\end{pmatrix}
\xrightarrow[C_2\leftarrow C_2+C_1]{R_2\leftarrow R_2+R_1}
\begin{pmatrix}
L &  \overline{I}\\
\overline{I} & L
\end{pmatrix}
\xrightarrow[C_1\leftarrow\overline{C_1}]{R_1\leftarrow \overline{R_1}}
\begin{pmatrix}
L &  I\\
I & L
\end{pmatrix}.
$
This is the Laplacian of $Q_{r+1}$, and is of rank $2^r$ by \cref{dMgeneric}.

Summing the ranks of all the blocks here, we can see that $L(G(\FF_2^{r+2},M_{r+2}))$ is of rank $2(2^{r-1}-2^{\lfloor (r-1)/2\rfloor})+2^r=2^{r+1}-2^{\lfloor (r+1)/2\rfloor}$.
\end{proof}

\begin{rem}
The number $2^{r-1}-2^{\lfloor (r-1)/2\rfloor}$ also occurs as the number of $\Z/2\Z$ summands in the elementary divisor decomposition of $K(Q_{r+1})$ \cite[Theorem~1.3]{bai}.
We are not sure if that is a coincident, or a special case of some deeper connection between the elementary divisors of hypercubes and the number of $2$-cyclic factors of other Cayley graphs.
\end{rem}

\begin{proof} [Proof of \cref{thm:dMnon-genericc}]
As discussed after the statement of \cref{lem:Cay_equal_HCq}, we consider a non-generic Cayley graph as the quotient graph $Q_n/U$ for some subspace $U\subset\FF_2^n$ that contains ${\bf 1}$, and prove the theorem by induction on $\dim U$.
When $\dim U=1$, we may assume $M=M_r$ by a left multiplication of suitable element of $GL_r(\FF_2)$, and the base case of the induction is \cref{prop:2_rank_base} as $r-\lceil (r+1)/2\rceil = \lfloor (r-1)/2\rfloor$.

Suppose $\dim U>1$.
Pick a codimension one subspace $U'$ of $U$ that contains ${\bf 1}$, and pick an arbitrary ${\bf u}\not\in U'$.
Since ${\bf x},{\bf y}\in\FF_2^n$ descend to the same vertex in $Q_n/U$ if and only if ${\bf x}+U',{\bf y}+U'\in\FF_2^n/U'$ descend to the same vertex in $(Q_n/U')/\langle {\bf u}+U'\rangle$, we can see that $Q_n/U\cong (Q_n/U')/\langle {\bf u}+U'\rangle$.
By induction hypothesis, the rank of the Laplacian of $Q_n/U'$ is equal to $2^r-m$ for some $m\geq 2^{r+1-\lceil n/2\rceil}$.

Choose an arbitrary $(2^{r-1}-\lfloor(m-1)/2\rfloor)\times (2^{r-1}-\lfloor(m-1)/2\rfloor)$-submatrix $M$ of $L(Q_n/U)$, whose rows and columns are indexed by $R,C\subset\FF_2^n/U$, respectively.
Pick any $(2^{r-1}-\lfloor(m-1)/2\rfloor)$-subset $R_1\subset\FF_2^n/U'$ whose image is $R$ under the projection map $\FF_2^n/U'\rightarrow\FF_2^n/U$, and let $R_2=R_1+({\bf u}+U')$ be its complement in the preimage of $R$, order both subsets according to the ordering of their images in $R$; define and order $C_1$ and $C_2$ for $C$ similarly.

Consider the $(2^r-2\lfloor(m-1)/2\rfloor)\times (2^r-2\lfloor(m-1)/2\rfloor)$-submatrix $M'$ of $L(Q_n/U')$, whose rows and columns are indexed $R_1\sqcup R_2$ and $C_1\sqcup C_2$, respectively.
The (ordered) subsets $R_1$ and $R_2$ (respectively, $C_1$ and $C_2$) differ coordinate-wise by the translation of ${\bf u}+U'$, which implies that $M'$ has a block structure 
$
\begin{pmatrix}
A & B\\
B & A
\end{pmatrix}.
$
Moreover, $M$ itself is equal to $A+B$ by the definition of quotient.
Since $2^r-2\lfloor(m-1)/2\rfloor>2^r-m$, $M'$ is singular.
Pick an arbitrary nonzero element $\begin{pmatrix} {\bf v}_1\\ {\bf v}_2 \end{pmatrix}$ from the kernel of $M'$, i.e., 
\begin{equation} \label{eq:kernel_element}
    A{\bf v}_1+B{\bf v}_2=A{\bf v}_2+B{\bf v}_1={\bf 0}.
\end{equation}
If ${\bf v}_1\neq {\bf v}_2$, then we have $M({\bf v}_1+{\bf v}_2)=(A{\bf v}_1+B{\bf v}_2)+(A{\bf v}_2+B{\bf v}_1)={\bf 0}$, so $M$ is singular as ${\bf v}_1+{\bf v}_2\neq{\bf 0}$ is in its kernel.
If ${\bf v}_1={\bf v}_2\neq{\bf 0}$, then \cref{eq:kernel_element} implies $A{\bf v}_1+B{\bf v}_1={\bf 0}$, which shows that ${\bf v}_1$ is in the kernel of $M$ and again shows that $M$ is singular.

Since $M$ is arbitrary, the rank of $L(Q_n/U)$ is at most $2^{r-1}-\lfloor(m-1)/2\rfloor-1=2^{r-1}-\lfloor(m+1)/2\rfloor$.
If $r+1>\lceil n/2\rceil$, then $2^{r+1-\lceil n/2\rceil}\geq 2$ is an even integer, and $\lfloor(m+1)/2\rfloor \geq\lfloor(2^{r+1-\lceil n/2\rceil}+1)/2\rfloor=2^{r-\lceil n/2\rceil}$.
Otherwise, we at least have $m\geq 1$, and $\lfloor(m+1)/2\rfloor\geq 1>2^{r-\lceil n/2\rceil}$ for $r<\lceil n/2\rceil$.
In both cases, the inequality in the theorem statement is verified.
\end{proof}

We conclude this section with some implications in the other setting that was mentioned in \cref{importantremark}.
A {\em signed graph} is an ordinary graph with some edges labeled as negative. The Laplacian of a signed (simple) graph can be obtained from the ordinary Laplacian by flipping the signs of the off-diagonal entries corresponding to the negative edges; the critical group of the signed graph is the cokernel of the signed Laplacian \cite{reinersign}.
In \cite{IKKY}, Iga et al. studied a class of signed graphs known as {\em Adinkras}, which are graphical gadgets that encode special supersymmetry and Clifford algebras (much like Cayley graphs encode finite groups); it is known that the underlying graph of any Adinkra must be a Cayley graph of some $\FF_2^r$.
In that work, the authors were able to determine the Sylow-$p$ subgroup of the critical group of an Adinkra for every odd prime $p$, and the Sylow-$2$ subgroup when the rank of the signed Laplacian over $\Z/2\Z$ is exactly $2^{r-1}$.
Since the rank over $\Z/2\Z$ is independent of the signs of the entries, the results of this section imply that their results on Sylow-$2$ subgroups apply precisely when the underlying Cayley graph is generic; see \cite[Section~4]{Yuen_Adinkra} for further discussion on the role of genericity in the theory of Adinkras.
\section{Bounding the Largest Cyclic Factor}\label{section: boundCyclicFactorSection}

We are interested in the largest powers of $2$ dividing the cyclic factors $c_i$ (i.e., their $2$-adic valuations), since all of the $p \neq 2$ information is determined by \cref{prop:pnot2}.
Anzis and Prasad \cite{anzisprasad} proved that for $G = Q_n$, the largest cyclic factor $c_1(Q_n)$ must divide $2^n\text{lcm}(1,\dots,n)$. As an immediate corollary, the largest 2-cyclic factor is bounded by $2^{\lfloor \log_2 n\rfloor + n}$. In this section, we adopt and improve their approach, and prove a tighter bound for all Cayley graphs of $\mathbb{F}_2^r$.

\begin{theorem}
\label{thm: largeCycFactorTheorem}
Let $G = G(\mathbb{F}_2^r, M)$ be a Cayley graph, and let $(\lambda_1 = 0), \lambda_2, \dots, \lambda_{2^r}$ be the eigenvalues of the $L(G)$.
 Then 
 \[
 c_1(G) \mid 2^{r-2} \on{lcm}\left(\lambda_{i}: i \ge 2\right).
 \]
\end{theorem}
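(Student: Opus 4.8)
The plan is to interpret $c_1(G)$ as the \emph{exponent} of $K(G)$ and to exhibit an explicit integer vector witnessing that $N := 2^{r-2}\ell$, with $\ell := \lcm(\lambda_i : i\ge 2)$, annihilates it. Since $\Img L(G)\subseteq H_0 := \{x\in\Z^{2^r} : \sum_v x_v = 0\}$ and $H_0\otimes\R = \mathbf{1}^\perp = (\ker L(G))^\perp$, we have $K(G) \cong H_0/\Img L(G)$, whose exponent is exactly the largest invariant factor $c_1(G)$. Thus it suffices to show $N\cdot H_0\subseteq\Img L(G)$; and since $\{e_u - e_0\}_{u\ne 0}$ is a $\Z$-basis of $H_0$, I only need $N(e_u - e_0)\in\Img L(G)$ for each $u\ne 0$.

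Next I would pass to the Hadamard eigenbasis from \cref{changeofbasis} and the eigenvalue lemma above. Writing $H$ for the symmetric matrix $H_{w,v} = (-1)^{w\cdot v}$ (so $H^2 = 2^r I$) and $D = \diag(\lambda_v)$, these lemmas give $L(G) = 2^{-r}HDH$, with Moore--Penrose pseudoinverse $L^+ = 2^{-r}HD^+H$ where $D^+ = \diag(0,\lambda_2^{-1},\dots,\lambda_{2^r}^{-1})$. The natural candidate preimage of $N(e_u-e_0)$ is $y := N L^+(e_u - e_0)$, and a direct expansion yields
\[
y_w = -\tfrac12\sum_{v:\,v\cdot u = 1}\frac{\ell}{\lambda_v}\,(-1)^{w\cdot v}.
\]
Here the condition $v\cdot u = 1$ forces $(-1)^{v\cdot u}-1 = -2$, so the prefactor $2^{-r}$ cancels against $N = 2^{r-2}\ell$ down to a single factor of $\tfrac12$ (together with the integers $\ell/\lambda_v$).

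The crux --- and the source of the factor-of-two improvement over Anzis--Prasad --- is that I do \emph{not} need each $y_w$ to be an integer. Since $\ker L(G) = \spn_{\R}(\mathbf{1})$, any solution of $L(G)y = N(e_u-e_0)$ may be shifted by a scalar multiple of $\mathbf{1}$, so an integer solution exists as soon as all coordinates $y_w$ share a common fractional part. For this I would compute $y_w - y_{w'}$ and observe that each summand is $\frac{\ell}{\lambda_v}\in\Z$ times $\tfrac12\big[(-1)^{w\cdot v}-(-1)^{w'\cdot v}\big]\in\{0,\pm1\}$, whence every difference is an integer. Subtracting the common fractional part (a multiple of $\mathbf{1}\in\ker L(G)$) then produces an integer $\tilde y$ with $L(G)\tilde y = N(e_u-e_0)$, completing the reduction. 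A naive demand that $y_w\in\Z$ outright would instead force $N = 2^{r-1}\ell$, which for $Q_n$ is precisely the Anzis--Prasad bound $2^n\lcm(1,\dots,n)$; the shift by $\ker L(G)$ is exactly what recovers the last factor of $2$. I expect the main obstacle to be bookkeeping this kernel-shift argument cleanly and verifying that the parity cancellation holds uniformly in $u$ and $w$; the underlying spectral algebra is routine given the lemmas already established.
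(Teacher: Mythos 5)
Your proof is correct, and it reaches the theorem by a route that differs from the paper's in a meaningful way. The computational heart is the same: you solve $L(G)y = N(e_u - e_0)$ in the Hadamard eigenbasis and exploit that solutions are only determined up to $\ker L(G) = \spn_{\R}(\mathbf{1})$, so integrality is only needed for the differences $y_w - y_{w'}$; each difference is a sum of integers $\ell/\lambda_v$ times $\tfrac12\big[(-1)^{w\cdot v}-(-1)^{w'\cdot v}\big] \in \{0,\pm 1\}$. This is exactly the mechanism behind the paper's \cref{orderofmonomial} (there phrased as requiring $C\,(p(v) - p(\vec{0})) \in \Z$ rather than $C\,p(v)\in\Z$), and in both arguments it is the sole source of the factor-of-two improvement over Anzis--Prasad. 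Where you genuinely diverge is the structural reduction. The paper first proves \cref{maxCyclicLemma}, namely $c_1(G) = \max_{1\le k\le r} \on{ord}(x_k - 1)$, using the McKay--Cartan and representation-ring results of Benkart--Klivans--Reiner, and then only needs to bound the orders of the $r$ elements $x_k - 1$. You instead identify $c_1(G)$ with the exponent of $K(G) \cong H_0/\Img L(G)$ (a standard splitting argument, which you justify correctly) and verify $N(e_u - e_0) \in \Img L(G)$ for every $u \neq 0$, i.e., for all $2^r - 1$ basis vectors of $H_0$, which under the ring isomorphism of \cref{cokerIso} means treating every element $x^u - 1$ on an equal footing rather than reducing to degree-one monomials. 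This costs nothing, since your bound is uniform in $u$, and it buys self-containedness: no representation theory is needed, only \cref{changeofbasis} and elementary lattice facts. What your route does not give --- and what the paper's machinery is really after --- is the exact-order characterization of \cref{orderofmonomial} and \cref{corollary: topcyclic}, which the paper reuses to determine the precise top cyclic factors of $Q_n$ in \cref{topcyclicQnsection}. As a proof of this theorem alone, yours is arguably the cleaner one.
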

For $G = Q_n$, the eigenvalues of $L(G)$ are of the form $2k$ for $0 \le k \leq n$ with multiplicity $\binom{n}{k}$. In this case our bound is $2^{n-2}\on{lcm}(2,\dots, 2n) = 2^{n-1}\on{lcm}(1,\dots, n)$, which improves the bound in \cite{anzisprasad} by a factor of $2$. This also improves the general bound in \cite[Corollary~3]{rushanan}, which states that $\on{lcm}(\lambda_{i})$ divides $c_1(G)$, and $c_1(G)$ in turn divides the product of all {\em distinct} nonzero eigenvalues $\prod_{\lambda\in
\{\lambda_{i}: \lambda_i\neq 0\}}\lambda$ for any integer matrix with integer eigenvalues.

First, using the ring-theoretic description in \cref{cokerIso}, we have the following lemma.

\begin{lemma} 
\label{lemma:LinearAlgebraCyclicOrder}
Suppose a nonzero $\overline{p(x_1, \dots, x_r)} \in \Z[x_1, \dots, x_r]/I(G) \simeq \Z \oplus K(G)$ has finite additive order and let $\mathbf{w}$ be the vector in $\Z^{2^r}$ corresponding to $p(x_1, \dots, x_r)$ under the isomorphism 
$$
\Z^{2^r} \cong \Z[x_1, \dots, x_r]/(x_1^2 - 1, \dots, x_r^2 - 1).
$$
Then the additive order of $\overline{p(x_1, \dots, x_r)}$ is
\[
|\mathbf{w}| := \min\{ C \in \Z_{>0}: C\mathbf{w} \in \Img L(G)\}.
\]
\end{lemma}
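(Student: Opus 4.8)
The plan is to recognize this lemma as essentially an unwinding of the definition of additive order in a quotient group, using the ring description already established in \cref{cokerIso}. First I would recall that $\coker L(G) = \Z^{2^r}/\Img L(G)$, and that the isomorphism $\Z^{2^r} \cong \Z[x_1,\dots,x_r]/(x_1^2-1,\dots,x_r^2-1)$ identifies the standard basis vector $e_v$ with the monomial $\prod_j x_j^{v_j}$ indexed by $v \in \FF_2^r$. After reducing $p$ to its (unique) multilinear representative modulo the relations $x_j^2-1$, it corresponds under this identification precisely to its coefficient vector $\mathbf{w} \in \Z^{2^r}$ in the monomial basis, so that $\overline{p} = \overline{\mathbf{w}}$ in the cokernel.

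The key structural step is to observe that, under this identification, the map $L(G)$ is nothing but multiplication by the group-ring element $g := n - \sum_i \prod_j x_j^{(v_i)_j}$: indeed $L(G)e_w = n e_w - \sum_i e_{w+v_i}$ matches $g \cdot x^w$, since $x^{v_i}x^w = x^{w+v_i}$ when $x_j^2 = 1$. Consequently $\Img L(G) = L(G)(\Z^{2^r})$ corresponds to the principal ideal generated by $g$, and passing to the quotient recovers exactly $\Z[x_1,\dots,x_r]/I(G)$. This is the content of \cref{cokerIso}, which I may assume.

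With this dictionary in hand, the statement is immediate from the definition of order in a quotient abelian group: the additive order of $\overline{\mathbf{w}}$ in $\coker L(G)$ is the least positive integer $C$ with $C\overline{\mathbf{w}} = 0$, i.e. with $C\mathbf{w} \in \Img L(G) = L(G)(\Z^{2^r})$, which is precisely $|\mathbf{w}|$. The remaining point to verify is that the hypothesis that $\overline{p}$ has finite additive order is exactly what guarantees the set $\{C \in \Z : C\mathbf{w} \in L(G)(\Z^{2^r})\}$ contains a positive integer, so that $|\mathbf{w}|$ is well-defined and finite. Writing $\coker L(G) \cong \Z \oplus K(G)$, finite order forces the free $\Z$-component of $\overline{p}$ to vanish, so $\overline{p}$ is torsion and lies in $K(G)$; any such element is killed by some positive $C$.

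I do not expect a genuine obstacle here, since the lemma is a restatement of the cokernel-as-quotient definition once \cref{cokerIso} is available. The only care needed is bookkeeping: confirming that the polynomial-to-vector correspondence is the coefficient vector in the monomial basis (and not, say, its dual), and being explicit that it is the finite-order hypothesis that makes the minimizing $C$ exist rather than being vacuously $+\infty$.
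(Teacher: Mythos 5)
Your proposal is correct and takes essentially the same approach as the paper, whose proof consists of the single remark that the lemma ``follows by the definition of cokernel and considering the cokernel as a $\Z$-module''; you have simply spelled out that unwinding (the identification of $L(G)$ with multiplication by $n - \sum_i \prod_j x_j^{(v_i)_j}$ via \cref{cokerIso}, and the finiteness of the minimizing $C$ from the torsion hypothesis) in full detail.
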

\begin{proof}
This follows by the definition of cokernel and considering the cokernel as a $\Z$-module.
\end{proof}

From now on, we denote $\on{ord}(p(x_1, \dots, x_r))$ as the additive order of $\overline{p(x_1, \dots, x_r)}$ in $\Z[x_1, \dots, x_r]/I(G)$.

\begin{lemma}
\label{maxCyclicLemma}
 $c_1(G) = \on{lcm}\left(\on{ord}(x_k - 1): 1\le k \le r\right)$.
\end{lemma}
\begin{proof}

Consider the map $\pi:\Z[x_1,\dots,x_r]/I(G)\cong \Z\oplus K(G) \to \Z$ that sends all $x_i\mapsto 1$. This is a well-defined map because $(x_i=1)$ is a common zero for all polynomials in $I(G)$. Since $\pi$ is surjective, $(\Z[x_1,\dots,x_r]/I(G))/\on{ker}(\pi)\cong\Z$, which forces $\on{ker}(\pi)$ to be the torsion part $K(G)$, and an element has finite additive order in the ring $\Z[x_1,\dots,x_r]/I(G)$ iff it lies in the kernel of $\pi$. Furthermore, any polynomial with finite additive order is a linear combination of $x_I - 1$, where $x_I$ denotes a squarefree monomial: we can write any polynomial in the ring as a sum $\sum_I c_Ix_I$ of squarefree monomials since $x_i^2=1$, and if the polynomial is in the kernel of $\pi$, then $\sum_I c_I=0$ thus we can rewrite it as $\sum_I c_I(x_I-1)$.

Now let $C = \on{lcm}\left(\on{ord}(x_k - 1): 1\le k \le r\right)$, and let $x_I$ be any squarefree monomial. We wish to show that $C(x_I -1) \equiv 0$, i.e., $C(x_I-1) \in I(G)$. Indeed, suppose $x_j \mid x_I$. Then we have $C(x_I - 1) = C(x_j - 1)\cdot \frac{x_I}{x_j} + C\left(\frac{x_I}{x_j} - 1\right)$, which we can reduce inductively to a linear combination of $x_I - 1$ with $\deg x_I = 1$. This shows that the largest cyclic factor is equal to the lcm of the orders taken over all $x_i - 1$, which is the desired result.
\end{proof}
\begin{rem}
This lemma can actually be slightly generalized. Namely, let $v_1, \dots, v_n$ be any generating set of $\FF_2^r$. Then the largest possible additive order equals the lcm of $\on{ord}\left(\prod_j x_j^{(v_i)_j} - 1\right)$ over all $1\leq i\leq n$. Anzis and Prasad's original argument shows that for the hypercube, we can take any $x_k - 1$ for $1 \le k \le n$. The argument relies on showing that $x_i - 1$'s have the same additive order, which follows from symmetry of the \emph{generators} under permutation. However, this is no longer the case for general Cayley graphs.
\end{rem}

Denote the lowest common denominator (lcd) of a set of fractions $A\subset\Q$ as the smallest positive integer $\on{lcd}(A)$ such that $\on{lcd}(A)\cdot a\in \Z$ for all $a\in A$; lcd obviously exists when $A$ is finite. The next lemma gives an explicit way for calculating $\on{ord}(x_j-1)$, \cref{thm: largeCycFactorTheorem} follows from it.

\begin{lemma}
\label{orderofmonomial}
The order of $x_j - 1$ in $K(G)$ is equal to the lowest common denominator of the following rational numbers:
\[
\frac{1}{2^{r-2}}\sum_{\substack{u\cdot v=1 \\ u_j = 1}}\frac{1}{\lambda_u},\quad \text{for }v \in \FF_2^r.
\]
\end{lemma}
\begin{proof}
Following from \cref{lemma:LinearAlgebraCyclicOrder}, the order of $x_j-1$ is the smallest positive integer $C$ such that there is a solution $\textbf{v} \in \Z^{2^r}$ to $L(G)\textbf{v} = C\textbf{w}$ for $\mathbf{w} = x_j - 1 \leftrightarrow (-1,0, \dots,1, \dots, 0)=e_{w_{i}}-e_{w_0}$. We first solve the equation $L(G)\mathbf{v} = \mathbf{w}$ by expressing $\mathbf{w}$ as a linear combination of the eigenbasis of $L(G)$. By \cref{changeofbasis}, we change to the eigenbasis and get
\[
\mathbf{w} = x_j - 1 \longmapsto \frac{1}{2^r}\left(\sum_{v \in \FF_2^r}\left((-1)^{v \cdot e_j} - (-1)^{v \cdot 0}\right)f_v\right) = -\frac{1}{2^{r-1}}\sum_{u_j = 1}f_u.
\]
Since $f_u$ is an eigenbasis, the equation $L(G)\mathbf{v} = \mathbf{w}$ has the following particular solution $\mathbf{v}=\mathbf{v}_0$: 
\[
\mathbf{v}_0 = -\frac{1}{2^{r-1}}\sum_{u_j = 1}\frac{1}{\lambda_u}f_u = -\frac{1}{2^{r-1}}\sum_{v \in \FF_2^r}\sum_{u_j = 1}\frac{(-1)^{u \cdot v}}{\lambda_u}\cdot e_v.
\]

Since the graph $G$ is connected, all solutions to $L(G)\mathbf{v} = \mathbf{w}$ are of the form $\mathbf{v}_0 + k\mathbf{1}^T$ where $\mathbf{1}=(1,\dots, 1)$ lies in the kernel. Therefore, $C$ is the smallest integer such that $C\mathbf{v}_0+k\mathbf{1}$ is an integer vector for some $k\in\R$. Equivalently speaking, $C$ is the lowest common denominator of pairwise-differences of $\mathbf{v}_0$'s entries given by
\[(\mathbf{v}_0)_{v} - (\mathbf{v}_0)_{\vec{0}}=\frac{1}{2^{r-1}}\sum_{u_j=1}\frac{(-1)^{u \cdot v}-1}{\lambda_u} = -\frac{1}{2^{r-2}}\sum_{\substack{u\cdot v=1 \\ u_j=1}}\frac{1}{\lambda_u},\quad \text{for all } v\in \FF_2^r.\]
\end{proof}

\begin{proof}[Proof of \cref{thm: largeCycFactorTheorem}]
This follows immediately from \cref{maxCyclicLemma} and \cref{orderofmonomial}.
\end{proof}

\begin{corollary}
\label{Syl2topcyclic}

The largest $2$-cyclic factor $\Z / 2^e \Z$ of $K(G)$ for the Cayley Graph $G=G(\FF_2^r,M)$ has bound
\[
e = v_2(c_1(G)) \leq \lfloor\log_2(n) \rfloor + r - 1.
\]
\end{corollary}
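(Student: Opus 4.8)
The plan is to deduce this bound directly from \cref{thm: largeCycFactorTheorem} by taking $2$-adic valuations of the divisibility it provides. Since that theorem gives $c_1(G) \mid 2^{r-2}\lcm(\lambda_i : i \ge 2)$, and since $v_2$ is monotone under divisibility and additive on products, applying $v_2$ yields
\[
v_2(c_1(G)) \le (r-2) + v_2\big(\lcm(\lambda_i : i \ge 2)\big) = (r-2) + \max_{i \ge 2} v_2(\lambda_i),
\]
where the last equality uses that the $2$-adic valuation of a least common multiple is the maximum of the individual valuations. So the entire problem reduces to bounding $\max_{i\ge 2} v_2(\lambda_i)$.

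Next I would extract the crucial arithmetic feature of the eigenvalues, namely that they are all even. Using the formula $\lambda_{u,M} = n - \sum_{i=1}^n (-1)^{u\cdot v_i}$, let $k_u$ be the number of generators $v_i$ with $u \cdot v_i = 1$; then $\sum_i (-1)^{u\cdot v_i} = (n - k_u) - k_u = n - 2k_u$, so $\lambda_{u,M} = 2k_u$. For $u \neq 0$ we have $1 \le k_u \le n$, the lower bound coming from connectedness (so $\lambda_u \neq 0$) and the upper bound being trivial. Hence $v_2(\lambda_u) = 1 + v_2(k_u)$ for every nonzero eigenvalue.

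It then remains only to bound $v_2(k_u)$. If $2^m \mid k_u$ then $2^m \le k_u \le n$, which forces $m \le \log_2 n$, and hence $m \le \lfloor \log_2 n\rfloor$ since $m$ is an integer. Consequently $\max_{i\ge 2} v_2(\lambda_i) \le 1 + \lfloor \log_2 n\rfloor$, and substituting this into the displayed inequality gives
\[
v_2(c_1(G)) \le (r-2) + 1 + \lfloor \log_2 n\rfloor = \lfloor \log_2 n\rfloor + r - 1,
\]
as claimed.

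I do not expect any genuine obstacle here: once \cref{thm: largeCycFactorTheorem} is available, the argument is just a short $2$-adic valuation computation. The only points deserving a little care are recognizing that every nonzero eigenvalue is even (which is exactly what lets the explicit factor $2^{r-2}$ combine cleanly with the $+1$ coming from $\lambda_u = 2k_u$, rather than losing the improvement) and handling the floor correctly, via $\lfloor \log_2(2n)\rfloor = 1 + \lfloor \log_2 n\rfloor$.
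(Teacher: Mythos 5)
Your proposal is correct and follows essentially the same route as the paper: both deduce the bound from \cref{thm: largeCycFactorTheorem} by bounding $v_2\big(\lcm(\lambda_i : i \ge 2)\big)$ by $1 + \lfloor \log_2 n \rfloor$, using that every nonzero eigenvalue is even and at most $2n$. The only cosmetic difference is that the paper bounds the lcm by $\lcm(2,\dots,2n)$, whereas you write $\lambda_u = 2k_u$ with $1 \le k_u \le n$ and bound $\max_u v_2(\lambda_u)$ directly; these are the same computation.
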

\begin{proof}
From \cref{thm: largeCycFactorTheorem} we know that $c_1(G)$ must divide $2^{r-2} \text{lcm}\left(\lambda_i: i \ge 2\right)$. Therefore, $v_2(c_1(G))$ must be capped by the $2$-adic valuation of the right hand side, which is
\[e = v_2(c_1(G)) \leq \max_{i\geq 2}(v_2(\lambda_i)) + r - 2.\]
However, note that $2 \leq \lambda_i \leq 2n$ for $i \ge 2$, so all $v_2(\lambda_i)$ is upper-bounded by $\lfloor\log_2(2n) \rfloor = \lfloor\log_2(n) \rfloor +1$.
\end{proof}

What is especially nice about this improvement is that it is asymptotically tight:  We can see in \cref{corollary:asymptoticsharp} that this upper bound on the $2$-adic valuation is achieved for all hypercubes of dimension $2^k$ or $2^k + 1$. This is an immediate consequence of the main result of the next section, which completely determines the top cyclic factor of the hypercube. We now discuss some preliminary results that lead to this sharper $Q_n$ result.

\begin{rem}
Since we only care about the Sylow-$2$ factor in these maximal orders, it actually suffices to find the minimal $C$ such that for any $v \in \FF_2^r$,
\[
\frac{C}{2^{r-2}}\sum_{\substack{u \cdot v = 1\\u_j = 1}}\frac{1}{\lambda_u} \in \Z_{(2)}
\] where $\Z_{(2)}$ is the localization of the integers away from the prime ideal $(2)$. This way, we do not actually care about odd denominators.
\end{rem}

Although \cref{orderofmonomial} gives us a closed form to calculate the order of $x_j-1$, the alternating sums in \cref{orderofmonomial} are too complicated to help us prove useful properties of these numbers or perform calculations. Our solution is to transform these sums into a set of simpler numbers that have the same lowest common denominator. In particular, we use the following claim.

\begin{claim}
Given two (finite) sets of rational numbers $A$ and $B$, if $\on{span}_\Z(A)=\on{span}_\Z(B)$, then $A$ and $B$ have the same lowest common denominator.
\end{claim}

The next change-of-basis lemma simplifies the complicated sums in \cref{orderofmonomial}, thus helps us to prove a simpler closed form for $c_1(G)$ by the end of this section.

\begin{lemma}
\label{lem:spanlemma}
In the algebra $\Z\left[a_S:\emptyset\neq S\subset[r]\right]$, we have the following equality:
$$\on{span}_\Z\left\{\sum_{|S\cap T|\text{ odd}}a_S\middle|\emptyset\neq T\subset[r]\right\}=\on{span}_\Z\left\{2^{|T|-1}\sum_{T\subset S}a_S\middle|\emptyset\neq T\subset [r]\right\}.$$
\end{lemma}

\begin{proof}
Denote the sums on the left hand side as $x_T=\sum_{|S\cap T|\text{ odd}}a_S$ and the sums on the right hand side as $y_T=2^{|T|-1}\sum_{T\subset S}a_S$. We will first show that every $x_T$ can be expressed as an integer linear combination of the $y_W$'s.

\begin{claim}
    \[x_T=\sum_{\emptyset\neq W\subset T}(-1)^{|W|-1}y_W.\]
\end{claim}

We can verify this by computation.
\begin{equation}\label{eq:claim1}
    \sum_{\emptyset\neq W\subset T}(-1)^{|W|-1}y_W = \sum_{\emptyset\neq W\subset T}\left((-2)^{|W|-1}\cdot\sum_{W\subset S}a_S\right) = \sum_{S\subset [r]}\left(a_S\cdot \sum_{\emptyset\neq W\subset S\cap T}(-2)^{|W|-1}\right).
\end{equation}
Now we investigate the rightmost sum in \cref{eq:claim1}. If $|S\cap T|=k$, then
\[\sum_{\emptyset\neq W\subset S\cap T}(-2)^{|W|-1}=\sum_{1\leq p\leq k}(-2)^{p-1}\binom{k}{p}=-\frac{1}{2}\cdot\left((-2+1)^k-1\right)=\begin{cases}0, &\text{if $k$ is even},\\1,&\text{if $k$ is odd.}\end{cases}\]
Plug it back into \cref{eq:claim1} and we get exactly the equation in the claim.

Now if we look at the change-of-basis matrix between $x_T$'s and $y_T$'s, it is an upper-triangular matrix with all $\pm 1$'s on the main diagonal (if we order variables by inclusion). Therefore the matrix is invertible over $\Z$.
\end{proof}

With the help of \cref{lem:spanlemma}, we can rewrite the equation for $\on{ord}(x_j-1)$ in \cref{orderofmonomial} into the following corollary.

\begin{corollary}
\label{orderofmonomialnew}
The order of $x_j - 1$ in $K(G)$ is equal to the lowest common denominator of the following rational numbers:
\[
\frac{1}{2^{r-|S|}}\sum_{\substack{u_s=1\\\forall s\in S}}\frac{1}{\lambda_u},\quad \text{for }j\in S \subset [r]\text{ and }|S|\geq 2\quad\text{ and }\quad \frac{1}{2^{r-2}}\sum_{u_j=1}\frac{1}{\lambda_u}.
\]
\end{corollary}

\begin{proof}
Recall that we define $w_S\in\FF_2^r$ to be the $0$-$1$ vector with entry $1$ at indices $s\in S$ and $0$ elsewhere. In \cref{lem:spanlemma}, we assign $a_S=1/\lambda_{w_S}$ if $j\in S$ and $a_S=0$ if $j\notin S$. Then both sides in \cref{lem:spanlemma} have the same lowest common denominators. The left hand side is the order of $x_j-1$ by \cref{orderofmonomial}. The right hand side is the lowest common denominator of the following numbers:
\[
\frac{1}{2^{r-|T|-1}}\sum_{\substack{u_T=1\\u_j=1}}\frac{1}{\lambda_u},\quad \text{for }T\not\ni j\quad\text{ and }\quad\frac{1}{2^{r-|T|-1}}\sum_{\substack{u_T=1}}\frac{1}{\lambda_u},\quad \text{for }T\ni j.
\]
Let $T' = T\cup\{j\}$. Then we can rewrite the numbers above as:
\[
\frac{1}{2^{r-|T'|}}\sum_{\substack{u_{T'}=1}}\frac{1}{\lambda_u},\quad \text{for }j\in T'\text{ and }|T'|\geq 2\quad\text{ and }\quad\frac{2}{2^{r-|T'|}}\sum_{\substack{u_{T'}=1}}\frac{1}{\lambda_u},\quad \text{for }j\in T'.
\]
The left hand side covers the right hand side except when $|T'|=1$, or $T'=\{j\}$. The left hand side corresponds to LHS in the corollary, and the case $T'=\{j\}$ corresponds to RHS in the corollary.
\end{proof}

Finally, we can combine the orders for all $x_j-1$ and prove the following closed form of the largest cyclic factor $c_1(G)$ of $K(G)$.

\begin{corollary} \label{corollary: topcyclic}
For Cayley Graph $G=G(\FF_2^r,M)$, $c_1(G)$ is equal to the lowest common denominator of the following rational numbers:
\[
\frac{1}{2^{r-|S|}}\sum_{\substack{u_s=1\\\forall s\in S}}\frac{1}{\lambda_u},\quad \text{for }S \subset [r]\text{ and }|S|\geq 2\quad\text{ and }\quad \frac{1}{2^{r-2}}\sum_{u_j=1}\frac{1}{\lambda_u},\quad \text{for }1\leq j\leq r.
\]
\end{corollary}

In fact, this is equivalent to saying that if we draw a $r$-dimensional hypercube, and put $1/\lambda_u$ on each vertex $u\neq\mathbf{0}$, then $c_1(G)$ is equal to the lowest common denominator of the arithmetic means of numbers on a face for all faces with codimension $\geq 2$ that do not pass through the origin.
\section{The Top Cyclic Factors for Hypercube Graphs}
\label{topcyclicQnsection}
In this section, we specialize to the case $G = Q_n$. We will use the techniques developed in the past section to prove the following theorems.

\begin{theorem}
\label{thm:topcyclicQn}
For $n\geq 2$, let $c_1(Q_n)$ be the size of the largest cyclic factor in $K(Q_n)$. Then,
\[
v_2(c_1(Q_n))=\max\left\{\max_{x<n}\{v_2(x)+x\}, v_2(n)+n-1\right\}.
\]
\end{theorem}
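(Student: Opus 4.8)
The plan is to specialize \cref{corollary: topcyclic} to $Q_n$ and convert the face-average condition into a one-parameter $2$-adic estimate. For the hypercube $r=n$ and $\lambda_u = 2w(u)$, where $w(u)$ is the Hamming weight of $u$. Fixing a face $(S,d)$ with $|S| = s \ge 2$ and $w(d) = j \ge 1$, and writing $m = n-s$ for the number of free coordinates, the relevant sum collapses to
\[
\sum_{u_S = d}\frac{1}{\lambda_u} = \frac{1}{2}\sum_{\ell=0}^{m}\binom{m}{\ell}\frac{1}{j+\ell} =: \frac{1}{2}\,T(m,j),
\]
since exactly $\binom{m}{\ell}$ of the free choices have weight $\ell$, giving total weight $j+\ell$. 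Hence \cref{corollary: topcyclic} yields
\[
v_2(c_1(Q_n)) = \max_{\substack{0\le m\le n-2\\ 1\le j\le n-m}}\Big[(m+1)-v_2\big(T(m,j)\big)\Big].
\]

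The core computation is the valuation of $T(m,j)$. First I would record the Pascal recursion $T(m,j)=T(m-1,j)+T(m-1,j+1)$ and the identity $j\,T(m,j)+m\,T(m-1,j+1)=2^m$, obtained by integrating $\frac{d}{dt}[t^j(1+t)^m]$ over $[0,1]$ and using $T(m,j)=\int_0^1 t^{j-1}(1+t)^m\,dt$. I then claim, and prove by induction on $m$, that
\[
v_2\big(T(m,j)\big)=v_2(m!)-\sum_{i=j}^{j+m}v_2(i),\quad\text{equivalently}\quad (m+1)-v_2\big(T(m,j)\big)=1+s_2(m)+\sum_{i=j}^{j+m}v_2(i),
\]
where $s_2(m)$ is the number of $1$'s in the binary expansion of $m$ (using $v_2(m!)=m-s_2(m)$). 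In the inductive step the two summands have predicted valuations differing by $v_2(j+m)-v_2(j)$; when these differ, the sum's valuation is their minimum and a short check matches the formula. The delicate case is equal valuations, which forces $v_2(j)=v_2(j+m)$ and hence $v_2(m)>v_2(j)$: the leading $2$-adic terms cancel and one must extract the next order, which is exactly what the relation $j\,T(m-1,j)+(j+m)\,T(m-1,j+1)=2^m$ supplies. This cancellation analysis is the first main obstacle.

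Writing $V(m,j):=1+s_2(m)+\sum_{i=j}^{j+m}v_2(i)$, the problem becomes a combinatorial optimization over intervals $I=[j,\,j+m]\subseteq[1,n]$ of length $L=m+1\le n-1$. Using $\sum_{i=a}^{b}v_2(i)=v_2\binom{b}{a-1}+v_2(L!)$ together with $1+s_2(L-1)+v_2(L!)=v_2(L)+L$, this rewrites cleanly as
\[
V = v_2(L)+L+v_2\binom{b}{a-1},\qquad a=j,\ b=j+m.
\]
The lower bound is then immediate: the intervals $[1,x]$ with $x<n$ give $V=v_2(x)+x$, realizing $\max_{x<n}\{v_2(x)+x\}$, while $[2,n]$ gives $V=v_2(n)+v_2(n-1)+n-1$, which equals $v_2(n)+n-1$ when $n$ is even and otherwise coincides with a term already counted (at $x=n-1$).

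For the matching upper bound I would prove that every admissible interval satisfies $V(I)\le\max_{x\le b}\{v_2(x)+x\}$, via Kummer's theorem (so $v_2\binom{b}{a-1}$ counts the carries in $(a-1)+L$) and induction on the number of carries. When $b\le n-1$ this gives $V\le\max_{x<n}\{v_2(x)+x\}$ directly. The subtle point is $b=n$: there the full interval $[1,n]$ of length $n$ is forbidden by $L\le n-1$, and one shows this costs exactly one, so that $V\le v_2(n)+n-1$ whenever $n$ is the strict maximizer of $v_2(x)+x$ on $[1,n]$ — producing the second term and explaining why it is $-1$ rather than $v_2(n)+n$. I expect this carry estimate together with the $b=n$ boundary refinement to be the second main obstacle; combining it with the lower bound gives the claimed maximum.
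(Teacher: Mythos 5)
Your route is genuinely different from the paper's, and the parts you actually work out are correct. The reduction to binomial sums is the paper's \cref{lem:binomial}. Your closed form $v_2(T(m,j))=v_2(m!)-\sum_{i=j}^{j+m}v_2(i)$ is equivalent to the paper's \cref{binomial2}: if $u$ is the unique element of maximal valuation in $[j,j+m]$, every other element $i$ of the interval satisfies $v_2(i)=v_2(|i-u|)$, so your formula collapses to $v_2\bigl(\tbinom{m}{u-j}/u\bigr)$. The proofs, however, differ: the paper derives \cref{binomial2} from Kummer's theorem plus a pairing argument showing the minimal-valuation terms occur an odd number of times, whereas your integral identity does the job more directly than even you claim. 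Indeed, your own relation $j\,T(m,j)+m\,T(m-1,j+1)=2^m$ gives $T(m,j)=\bigl(2^m-m\,T(m-1,j+1)\bigr)/j$, and the inductive hypothesis forces $v_2\bigl(m\,T(m-1,j+1)\bigr)\le v_2(m!)\le m-1<m$, so the $2^m$ term never interferes; this closes the induction in \emph{every} case at once, with no case split on whether the two Pascal summands have equal valuation. The interval reformulation $V=v_2(L)+L+v_2\binom{b}{a-1}$ and the lower bound via $[1,x]$ and $[2,n]$ also check out. Structurally, your uniform bound over all intervals replaces the paper's induction on $n$, which in your language is how the paper disposes of the intervals lying inside $[1,n-1]$ of length at most $n-2$.

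The gap is the upper bound, and it is not routine. The inequality $V(I)\le\max_{x\le b}\{v_2(x)+x\}$ is true, but ``Kummer plus induction on the number of carries'' is not a proof of it: the witness $x$ is genuinely non-canonical, so any induction must track which witness survives. When the maximal-valuation element $u$ of $[a,b]$ satisfies $u\ge L$ the witness is $u$; but for $[1,12]$ one has $V=14$ while $\max_{x\le 11}\{v_2(x)+x\}=11$, so only $x=b=12$ works; and for $[2,25]$ one has $V=27$ while $16+v_2(16)=20$ and $25+v_2(25)=25$, so only $x=24$ works. The $b=n$ refinement is in worse shape: it must be proved for \emph{every} interval $[a,n]$ with $a\ge2$, not just for the optimum $[2,n]$, and the constraint $a\ge2$ enters the carry analysis in a nonobvious place. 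Concretely, for $n=12$ each interval $[a,12]$ with $2\le a\le 4$ has $u=8$, exactly one carry, and $V=15-a$; your claim (a) only yields $V\le 12+v_2(12)=14$, and the needed bound $V\le v_2(12)+11=13$ holds precisely because $a\ge2$ (at $a=1$ it fails, which is the real content of ``the forbidden interval $[1,n]$ costs one''). Establishing these two statements in general is exactly the binary bookkeeping carried out by the paper's Claim and its two subclaims after the $(\ast)$ reduction; your plan defers that combinatorial core rather than resolving it, so as written it is not yet a proof of \cref{thm:topcyclicQn}.
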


\begin{theorem}
\label{thm:2ndtopcyclicQn}
For $n\geq 3$, the $2^\text{nd}$ to the $(n-1)^\text{th}$ largest $2$-cyclic factor in $K(Q_n)$ all have the same size $v_2(c_2(Q_n))$. Moreover,
$$v_2(c_2(Q_n)) =v_2(c_3(Q_n)) =\cdots= v_2(c_{n-1}(Q_n)) =\max_{x<n}\{v_2(x)+x\}.$$
\end{theorem}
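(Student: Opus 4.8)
The plan is to reduce the statement to two rank inequalities and then feed them from the arithmetic of the orders of the monomial classes $x_I-1$. Write $K:=\on{Syl}_2 K(Q_n)$, and recall that for a finite abelian $2$-group $\bigoplus_i \Z/2^{e_i}\Z$ the number of parts with $e_i\ge t$ equals $\dim_{\FF_2}\!\big(2^{t-1}K/2^{t}K\big)$. Setting
\[
N(t):=\dim_{\FF_2}\!\big(2^{t-1}K/2^{t}K\big)=\#\{\,i: v_2(c_i(Q_n))\ge t\,\},\qquad A:=\max_{x<n}\{v_2(x)+x\},
\]
the theorem follows from the two bounds $N(A)\ge n-1$ and $N(A+1)\le 1$. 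Indeed, since $v_2(c_2)\ge\cdots\ge v_2(c_{n-1})$, the first bound forces $v_2(c_{n-1})\ge A$ while the second forces $v_2(c_2)\le A$, squeezing every valuation in between to exactly $A$. This uses nothing about $c_n,\dots,c_{2^{n-1}-1}$, so it is insensitive to the (conjectural) bottom of the group.

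The arithmetic input is the $2$-adic valuation of $\on{ord}(x_I-1)$. By the $S_n$-symmetry of $I(Q_n)$, this depends only on $|I|=k$; write its $2$-part as $2^{e_k}$. I would compute the $e_k$ from \cref{corollary: topcyclic}: for $Q_n$ one has $\lambda_u=2\,w(u)$, so each face-sum collapses to a sum of the form $\sum_{\ell}\binom{n-|S|}{\ell}\frac{1}{j+\ell}$, whose $2$-adic valuations are exactly what \cref{thm:topcyclicQn} already extracts via \cref{maxCyclicLemma}. This gives $e_1=\max\{A,B\}$ with $B=v_2(n)+n-1$, and I expect the same computation to yield $e_k\le A$ for all $k\ge 2$ together with $e_2=A$. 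The key elementary identity I use throughout is $x_i-x_j=x_j(x_ix_j-1)$: since $x_j$ is a unit, $\on{ord}(x_i-x_j)=\on{ord}(x_ix_j-1)=2^{e_2}\cdot(\text{odd})$.

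For the upper bound $N(A+1)\le 1$: by (the proof of) \cref{maxCyclicLemma} every finite-order element is a $\Z$-combination of the $x_I-1$, so $2^{A}K$ is generated by the classes $2^{A}(x_I-1)$. Those with $|I|\ge 2$ vanish because $e_k\le A$, and since $e_2\le A$ the differences satisfy $2^{A}(x_i-x_j)=0$ in $K$, whence all the classes $2^{A}(x_i-1)$ coincide. Therefore $2^{A}K$ is cyclic, giving $N(A+1)=\dim_{\FF_2}(2^{A}K/2^{A+1}K)\le 1$.

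For the lower bound $N(A)\ge n-1$ I would produce $n-1$ classes that are $\FF_2$-independent in $2^{A-1}K/2^{A}K$. The natural pool is $x_1-1$ together with the differences $x_1-x_j$ for $2\le j\le n$, which satisfy the single relation $\sum_{j=2}^{n}(x_1-x_j)=n(x_1-1)$. The main obstacle will be this independence count, because it is parity-sensitive: in $2^{A-1}K/2^{A}K$ the relation reads $\sum_j \overline{x_1-x_j}=\overline{x_1-1}$ when $n$ is odd (so the $n-1$ differences, each of valuation $e_2=A$, already span an $(n-1)$-dimensional space), whereas when $n$ is even it degenerates to $\sum_j\overline{x_1-x_j}=0$, the differences collapse to dimension $n-2$, and one must invoke $e_1=B>A$ so that $\overline{x_1-1}$ is a genuinely new direction supplying the missing factor. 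I would treat both cases uniformly by determining the subgroup $H=\langle x_1-1,\dots,x_n-1\rangle$, showing $\on{Syl}_2 H\cong \Z/2^{\max\{A,B\}}\oplus(\Z/2^{A})^{\,n-2}$ from the presentation with relations $\sum_i(x_i-1)=0$ and $2^{A}(x_i-x_j)=0$. Because a subgroup only bounds $N(\cdot)$ from above, the delicate point is to transfer this to $K$: I would instead exhibit a quotient of $K(Q_n)$ realizing $(\Z/2^{A})^{\,n-1}$ (quotients satisfy $N_{Q}\le N_{K}$), or certify independence directly by pairing the candidate classes against explicit preimage vectors built from the eigenbasis formula in \cref{orderofmonomial}. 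Verifying that there are no hidden relations beyond the two above—equivalently, pinning the dimension of the difference span on the nose in both parities—is the step I expect to require the most care.
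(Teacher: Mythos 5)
Your overall strategy---squeezing $v_2(c_2),\dots,v_2(c_{n-1})$ between the two counting bounds $N(A)\ge n-1$ and $N(A+1)\le 1$---is genuinely different from the paper's, but as written it contains a false intermediate claim and leaves the harder half unproved. The claim ``$e_k\le A$ for all $k\ge 2$'' is simply not true. Take $Q_4$, where $A=\max_{x<4}\{v_2(x)+x\}=3$: applying the monomial version of \cref{orderofmonomial} (the same formula both you and the paper use) to $x_1x_2x_3-1$, i.e.\ $w=e_1+e_2+e_3$, with test vector $v=e_1$, the relevant set is $\{u: u\cdot v=1,\ u\cdot w=1\}=\{(1,0,0,0),(1,0,0,1),(1,1,1,0),(1,1,1,1)\}$, so the integrality condition reads
\[
\frac{C}{2^{4-2}}\left(\frac12+\frac14+\frac16+\frac18\right)=\frac{25C}{96}\in\Z,
\]
forcing $2^5\mid C$; hence $e_3=5>3=A$. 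In general, when $n$ is even the odd-degree classes $x_I-1$ inherit the large valuation $v_2(n)+n-1>A$, so your assertion that ``those with $|I|\ge2$ vanish'' fails and with it your proof that $2^AK$ is cyclic. This step is repairable: using $x_I-1=x_j(x_{I\setminus j}-1)+(x_j-1)$, the equality $e_2=A$, and $x_jg=-g$ for $g:=2^A(x_1-1)$, one shows inductively that $2^A(x_I-1)$ equals $g$ or $0$ according to the parity of $|I|$, so $2^AK=\langle g\rangle$ after all---but that argument is not in your proposal.

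The more serious gap is the lower bound $N(A)\ge n-1$, which you explicitly leave open and which is exactly the hard content of the theorem. Knowing the orders of $x_1-1$ and of the differences $x_i-x_j$, together with the relations $\sum_i(x_i-1)=0$ and $2^A(x_i-x_j)=0$, only exhibits $H=\langle x_1-1,\dots,x_n-1\rangle$ as a \emph{quotient} of the group presented by those relations; excluding hidden relations is the whole difficulty, and neither of your two suggested routes is carried out. (Incidentally your monotonicity is backwards: for finite abelian groups both subgroups and quotients $H$ of $K$ satisfy $N_H(t)\le N_K(t)$, so producing a subgroup with $n-1$ factors of valuation $\ge A$ would suffice---but you still have to produce one.) The paper sidesteps the independence question entirely with a symmetry lemma: for $2\le k\le n-1$, if $r_1(x_1-1)+\cdots+r_{k-1}(x_{k-1}-1)+C_k(x_k-1)\in\Img L(G)$, then applying the variable permutations $(1\,n)$ and $(1\,k)$ (legitimate precisely because $k\le n-1$, so $x_n$ is untouched) and subtracting gives $C_k(x_k-x_1)\in\Img L(G)$; hence $c_k=\on{ord}(x_k-x_1)=\on{ord}(x_1x_k-1)$ exactly, and the binomial computation of that single order, excluding the $a=b=2$ face that produces $v_2(n)+n-1$, finishes the proof. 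That one trick replaces both of your bounds, and it---or some substitute for it---is what your proposal is missing.
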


Specifically, when $n=2^k$ or $2^k+1$, \cref{thm:topcyclicQn,thm:2ndtopcyclicQn} implies the following corollary, which shows that the bound $c_1(G)\leq r+\lfloor\log_2 n\rfloor-1$ in \cref{Syl2topcyclic} is asymptotically sharp over all Cayley graphs of $\FF_2^r$.
\begin{corollary}
\label{corollary:asymptoticsharp}
\begin{enumerate}[label = (\alph*)]
    \item For $n = 2^k -1$, the top $(n-1)$ Sylow-$2$ cyclic factors have exponent $2^k - 1$.
    \item For $n = 2^k$, the top Sylow-$2$ factor is $2^k + k - 1$. The $2$nd through $(n-1)$st Sylow-$2$ factors are $2^{k} - 1$.
    \item For $n = 2^k + 1$, the top $(n-1)$ Sylow-$2$ cyclic factors are $2^k + k$.
\end{enumerate}
\end{corollary}

\begin{rem}
We note that one only needs to inspect at most $\log_2 n$ many candidates for $x$ when finding $\max_{x<n}\{v_2(x)+x\}$.
Rewriting $\max_{x<n}\{v_2(x)+x\}=\max_{k\leq\log_2 n}\{\max_{v_2(x)\geq k, x<n}\{k+x\}\}$, for every $k$, we may assume $x$ is equal to $n-1$ setting the last $k$ digits in binary representation zeros, as it is the largest possible $x<n$ to have $v_2(x)\geq k$.

In particular, writing $n-1$ in binary, one only needs to compare the values of $v_2(x)+x$ for $x$'s obtained from $n-1$ (in binary representation) by successively changing the last nonzero digit to zero.
For example, if $n=54$ and $n-1=53=(110101)_2$, we only need to inspect $x=(110101)_2,(110100)_2,(110000)_2,(100000)_2=53,52,48,32$.
\end{rem}

In order to prove \cref{thm:topcyclicQn}, we first translate it into a question about binomial coefficients.

\begin{lemma} \label{lem:binomial}
$c_1(Q_n)$ is the the lowest common denominator of the following rational numbers:
\[
F_n(a):=\frac{1}{2^{n-a}}\sum_{i=0}^{n-a}\frac{\binom{n-a}{i}}{2(a+i)},\quad \text{for }2\leq a\leq n\quad\text{ and }\quad F_n(1):=\frac{1}{2^{n-2}}\sum_{i=0}^{n-1}\frac{\binom{n-1}{i}}{2(1+i)}.\]
\end{lemma}

\begin{proof}
This follows from \cref{corollary: topcyclic} and the fact that when $G=Q_n$, $\lambda_u=2w(u)$ (recall $w(u)$ is the number of $1$s in the vector $u\in\FF_2^r$). In particular, there are $\binom{n-a}{i}$ ways to choose $i$ 1's in a size $n-a$ vector.
\end{proof}

Notice that there are binomial coefficients in the expressions in the lemma above. Therefore, we will make extensive use of a classical theorem of Kummer's that computes the $2$-adic valuation of binomial coefficients in terms of their binary expansions.
\begin{theorem} (Kummer's Theorem)
For any non-negative integers $a\geq b$,
$$v_2\left(\binom{a}{b}\right)=\text{number of carries when adding $a-b$ to $b$ in binary}.$$
\end{theorem}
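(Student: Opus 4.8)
The plan is to deduce Kummer's Theorem from two classical ingredients: Legendre's formula for the $2$-adic valuation of a factorial, and a digit-sum identity that counts binary carries. Throughout, write $s_2(m)$ for the number of $1$'s in the binary expansion of $m$ (the binary digit sum).

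First I would establish Legendre's formula in the form $v_2(m!) = m - s_2(m)$. This follows by counting multiples of powers of $2$: for each $i \geq 1$, exactly $\lfloor m/2^i\rfloor$ of the integers $1, \dots, m$ are divisible by $2^i$, so $v_2(m!) = \sum_{i \geq 1}\lfloor m/2^i\rfloor$. Writing $m$ in binary and summing the floors telescopes to $m - s_2(m)$. Applying this to $\binom{a}{b} = a!/(b!\,(a-b)!)$ and abbreviating $c = a-b$, so that $a = b+c$, gives
\[
v_2\!\left(\binom{a}{b}\right) = \bigl(a - s_2(a)\bigr) - \bigl(b - s_2(b)\bigr) - \bigl(c - s_2(c)\bigr) = s_2(b) + s_2(c) - s_2(b+c).
\]
Thus the entire theorem reduces to identifying $s_2(b) + s_2(c) - s_2(b+c)$ with the number of carries in the binary addition of $b$ and $c = a - b$.

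The combinatorial heart of the argument, and the step I expect to be the main obstacle, is this carry-counting identity. I would prove it by analyzing schoolbook addition bit by bit: at position $j$ the column receives $\beta_j + \gamma_j$ plus any incoming carry, where $\beta_j, \gamma_j \in \{0,1\}$ are the $j$th bits of $b$ and $c$, and a carry propagates out exactly when this quantity is at least $2$. Each carry replaces a contribution of value $2$ at weight $2^j$ by a single $1$ at weight $2^{j+1}$, reducing the running digit count by exactly one relative to the naive total $s_2(b) + s_2(c)$. Letting $\kappa$ be the total number of carries, one then shows $s_2(b+c) = s_2(b) + s_2(c) - \kappa$ by induction on the number of bit positions, the inductive step case-splitting on the carry-in value together with the two current bits $\beta_j, \gamma_j$. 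Substituting into the displayed formula gives $v_2\!\left(\binom{a}{b}\right) = \kappa$, which is precisely the number of carries when adding $a-b$ to $b$ in binary, as claimed.
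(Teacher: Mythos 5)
Your proof is correct, but there is nothing in the paper to compare it against: the paper states Kummer's Theorem as a classical result (citing only an illustrative example, $v_2\bigl(\binom{500}{317}\bigr)=6$) and gives no proof, since the result is standard and only used as a tool in \cref{binomial2} and the computation of $c_1(Q_n)$. Your argument is the textbook route: Legendre's formula $v_2(m!)=\sum_{i\ge 1}\lfloor m/2^i\rfloor = m - s_2(m)$, applied to $\binom{a}{b}=a!/(b!\,c!)$ with $c=a-b$, reduces the claim to the carry-counting identity $s_2(b)+s_2(c)-s_2(b+c)=\kappa$, and your bit-by-bit analysis of that identity is sound. One remark: the induction in your final step can be replaced by a one-line bookkeeping argument. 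If $\varepsilon_{j+1}\in\{0,1\}$ denotes the carry out of position $j$ (with $\varepsilon_0=0$), then the $j$th bit of $b+c$ is $\beta_j+\gamma_j+\varepsilon_j-2\varepsilon_{j+1}$, and summing over all $j$ gives
\[
s_2(b+c)=s_2(b)+s_2(c)+\sum_{j\ge 0}\varepsilon_j-2\sum_{j\ge 0}\varepsilon_{j+1}=s_2(b)+s_2(c)-\kappa,
\]
which avoids the case analysis entirely. Note also that your argument is special to $p=2$ only in that the general identity $s_p(b)+s_p(c)-s_p(b+c)=(p-1)\kappa$ carries a factor of $p-1$, which is invisible here; the same proof gives Kummer's Theorem for every prime.
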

\begin{example}
\[
\begin{tikzpicture}[
    row 1/.style={font=\textsl,font=\scriptsize,black!85, anchor=west,
        inner sep=1.5pt},
    every node/.style={column sep=.5mm,row sep=.2mm}]
    \matrix (m) [matrix of math nodes,
        nodes in empty cells,
    ] 
    {
        &   &   & 1 & 1 & 1 & 1 & 1 & 1 &   &                   \\
        & 1 & 0 & 0 & 1 & 1 & 1 & 1 & 0 & 1 &[10mm]        317\\
    +   & 0 & 1 & 0 & 1 & 1 & 0 & 1 & 1 & 1 &        183\\ 
        & 1 & 1 & 1 & 1 & 1 & 0 & 1 & 0 & 0 &        500 \\                                                  
    };

    \draw[-,color=black,semithick] (m-3-2.south west) -- (m-3-10.south east);
\end{tikzpicture}
\]
Therefore, $v_2(\binom{500}{317})=6$, since there are $6$ carries.
\end{example}

The next lemma shows that the $2$-adic valuations of each sum in \cref{lem:binomial} is equal to the $2$-adic valuation of a single term within the sum.
\begin{lemma} \label{binomial2}
For any $p\geq 1$, $q\geq 0$, assume $u$ is the unique element in the interval $[p,p+q]$ that maximizes $v_2(u)$, then
$$v_2\left(\sum_{i=0}^{q}\frac{\binom{q}{i}}{p+i}\right)=v_2\left(\frac{\binom{q}{u-p}}{u}\right).$$
\end{lemma}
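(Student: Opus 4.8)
Write $\Sigma(p,q) := \sum_{i=0}^{q}\binom{q}{i}/(p+i)$, so that \cref{lem:binomial} asks for $v_2(\Sigma(p,q))$ and the present lemma asserts $v_2(\Sigma(p,q))=v_2\binom{q}{u-p}-v_2(u)$, where $u$ is the unique maximizer of $v_2$ on $[p,p+q]$ (uniqueness holds because any two elements of $[p,p+q]$ with the maximal valuation $s$ would force a multiple of $2^{s+1}$ into the interval). The plan is to reduce the entire sum to the single term indexed by $u$ by induction on $q$, driven by a clean one-step recurrence rather than by a naive ``dominant term'' estimate; the latter genuinely fails, since several terms can share the minimal valuation (e.g.\ $p=1,q=2$ gives three terms of valuation $0$), so cancellations must be controlled.

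The engine is the identity
\[
p\,\Sigma(p,q) = 2^{q} - q\,\Sigma(p+1,q-1),
\]
proved elementarily by writing $\tfrac{p}{p+i}=1-\tfrac{i}{p+i}$, summing $\sum_i\binom{q}{i}=2^{q}$, and using $i\binom{q}{i}=q\binom{q-1}{i-1}$. The virtue of this recurrence over a Pascal-type splitting is that it involves a \emph{single} recursive instance $\Sigma(p+1,q-1)$, so no two-term cancellation arises. First I would record that the $2^{q}$ term is $2$-adically negligible: by the inductive formula and Kummer's Theorem, $v_2\big(q\,\Sigma(p+1,q-1)\big)=v_2(q)+v_2\binom{q-1}{\,\cdot\,}-v_2(u)\le 2\lfloor\log_2 q\rfloor<q$ for all $q\ge 2$ (small $q$ being handled as base cases), whence $v_2\big(2^{q}-q\,\Sigma(p+1,q-1)\big)=v_2\big(q\,\Sigma(p+1,q-1)\big)$. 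In the main case $u\neq p$, the maximizer is unchanged under dropping the left endpoint, so $u(p+1,q-1)=u$; applying the inductive hypothesis and the recurrence gives
\[
v_2(\Sigma(p,q)) = -v_2(p)+v_2(q)+v_2\binom{q-1}{u-p-1}-v_2(u).
\]
This matches the target $v_2\binom{q}{u-p}-v_2(u)$ exactly once one combines $\binom{q}{u-p}=\tfrac{q}{u-p}\binom{q-1}{u-p-1}$ with the key arithmetic fact $v_2(u-p)=v_2(p)$. That fact follows from uniqueness of $u$: writing $u=2^{s}m$ with $m$ odd, the element $u-2^{s}$ has valuation $>s$ and so lies outside $[p,p+q]$, forcing $0<u-p<2^{s}$ and hence $v_2(u-p)\le s-1<v_2(u)$, so $v_2(p)=v_2\big(u-(u-p)\big)=v_2(u-p)$.

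The main obstacle is the residual endpoint case $u=p$ (the recurrence is asymmetric, so $u=p+q$ falls under the main case above, but $u=p$ does not). Here the target reduces to showing $2^{q}-q\,\Sigma(p+1,q-1)$ is a $2$-adic unit, i.e.\ $v_2\big(q\,\Sigma(p+1,q-1)\big)=0$; the constraint $q<2^{\,v_2(p)}$ (again from uniqueness of $u$) together with the inductive formula for $\Sigma(p+1,q-1)$ should pin this down, but it requires a separate short valuation computation and is where the argument is most delicate. Once this case is dispatched, the induction closes and yields the stated equality.
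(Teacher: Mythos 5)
Your argument is correct in outline but takes a genuinely different route from the paper's, and it has one unfinished step. The paper works with the sum directly: using Kummer's Theorem it shows every term $\binom{q}{c-p}/c$ has $2$-adic valuation at least that of the $u$-term, and then --- precisely because several terms can share this minimal valuation --- it shows the minimal valuation occurs an \emph{odd} number of times, by pairing each equality case $c\neq u$ with the case obtained by toggling the bit of $c-p$ at position $k=v_2(u)$; an odd number of minimal-valuation terms cannot cancel, which pins down the valuation of the sum. Your recurrence $p\,\Sigma(p,q)=2^{q}-q\,\Sigma(p+1,q-1)$ sidesteps cancellation entirely, since the right side contains a single recursive term; the cost is an induction with an asymmetric endpoint case, and all the arithmetic concentrates in the identities $v_2(u-p)=v_2(p)$ and $\binom{q}{u-p}=\tfrac{q}{u-p}\binom{q-1}{u-p-1}$, both of which you verify correctly.

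Two repairs are needed. The cosmetic one: your chain $v_2(q)+v_2\binom{q-1}{u-p-1}\le 2\lfloor\log_2 q\rfloor<q$ fails at $q=2$ and $q=4$; replace it by $v_2(q)+v_2\binom{q-1}{u-p-1}\le\lfloor\log_2 q\rfloor+\lfloor\log_2(q-1)\rfloor$, which is $<q$ for every $q\ge 2$, because the number of carries in an addition producing $q-1$ is at most the position of the top bit of $q-1$. The substantive one: the endpoint case $u=p$ is only asserted (``should pin this down''), and without it the induction does not close. It does go through, as follows. When $u=p$, uniqueness forces $q<2^{s}$ with $s=v_2(p)$, hence $v_2(p+j)=v_2(j)$ for $1\le j\le q$, and the unique maximizer on $[p+1,p+q]$ is $u'=p+2^{t}$ with $t=\lfloor\log_2 q\rfloor$. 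The inductive hypothesis then gives $v_2\big(q\,\Sigma(p+1,q-1)\big)=v_2(q)+v_2\binom{q-1}{2^{t}-1}-t$. If $q=2^{t}$ this equals $t+0-t=0$; if $2^{t}<q<2^{t+1}$, counting carries via binary digit sums (Kummer's Theorem) gives $v_2\binom{q-1}{2^{t}-1}=t-v_2(q-2^{t})$, while $v_2(q)=v_2(q-2^{t})$, so again the total is $0$. Hence $2^{q}-q\,\Sigma(p+1,q-1)$ is a $2$-adic unit, and the recurrence yields $v_2(\Sigma(p,q))=-v_2(p)=v_2\big(\tbinom{q}{0}/u\big)$, exactly what the lemma requires in this case. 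With these two patches your induction is complete and gives a valid alternative proof.
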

\begin{proof}
First we claim that for any $c\in[p,p+q]$,
\begin{equation}\label{eq:binomv2}
    v_2\left(\frac{\binom{q}{c-p}}{c}\right)\ge v_2\left(\frac{\binom{q}{u-p}}{u}\right).
\end{equation}
Denote $k:=v_2(u)$, then $q<2^{k+1}$. Now we compare $v_2\left(\binom{q}{c-p}\right)$ and $v_2\left(\binom{q}{u-p}\right)$, which by Kummer's Theorem is the number of carries in the sums $(c-p)+(p+q-c)$ and $(u-p)+(p+q-u)$, respectively. Notice that the binary form of $c-p$ and $u-p$ are the same in the last $v_2(c)$ bits, therefore both cases have the same number of carries in the last $v_2(c)$ bits. In the remaining $k-v_2(c)$ bits, the case $u-p$ has at most $k-v_2(c)$ carries and the case $c-p$ has at least $0$ carries. This worst scenario exactly results in equality.

For example, when $p=134$, $q=101$, we have $u=192$ and $k=6$. Now we analyze the case when $c=168$, $v_2(c)=3$. In the two vertical additions, last $v_2(c)=3$ bits (indicated by the red box) are identical, and therefore the first $3$ carries (indicated by the blue box) are identical. The number of carries differ in at most $3$, which is the same as $k-v_2(c)$.
\[
\begin{tikzpicture}[
    row 1/.style={font=\textsl,font=\scriptsize,black!85, anchor=west, inner xsep=1.5pt, inner ysep=3pt},
    every node/.style={column sep=.5mm,row sep=.2mm}
    ]
    \matrix (m) [matrix of math nodes,
        nodes in empty cells,
    ] 
    {
         & 1 & 1 & 1 &   & 1 &   &   &                   \\
        & 0 & 1 & 1 & 1 & 0 & 1 & 0 &[10mm]        u-p=58\\
    +    & 0 & 1 & 0 & 1 & 0 & 1 & 1 &        43\\ 
         & 1 & 1 & 0 & 0 & 1 & 0 & 1 &        q=101 \\                                                  
    };

    \draw[-,color=black,semithick] (m-3-2.south west) -- (m-3-8.south east);
    \draw[dashed,color=red,semithick] (m-4-6.south west) -- (m-4-8.south east) -- (m-2-8.north east) -- (m-2-6.north west) -- cycle;
    \draw[dashed,color=blue,semithick] (m-1-5.south west) -- (m-1-7.south east) -- (m-1-7.north east) -- (m-1-5.north west) -- cycle;
\end{tikzpicture}
\hspace{2cm}
\begin{tikzpicture}[
    row 1/.style={font=\textsl,font=\scriptsize,black!85, anchor=west,
        inner xsep=1.5pt, inner ysep=3pt},
    every node/.style={column sep=.5mm,row sep=.2mm}]
    \matrix (m) [matrix of math nodes,
        nodes in empty cells,
    ] 
    {
         &   &   &   &   & 1 &   &   &                   \\
        & 0 & 1 & 0 & 0 & 0 & 1 & 0 &[10mm]        c-p=34\\
    +    & 1 & 0 & 0 & 0 & 0 & 1 & 1 &        67\\ 
         & 1 & 1 & 0 & 0 & 1 & 0 & 1 &  q=101 \\                                                  
    };

    \draw[-,color=black,semithick] (m-3-2.south west) -- (m-3-8.south east);
    \draw[dashed,color=red,semithick] (m-4-6.south west) -- (m-4-8.south east) -- (m-2-8.north east) -- (m-2-6.north west) -- cycle;
    \draw[dashed,color=blue,semithick] (m-1-5.south west) -- (m-1-7.south east) -- (m-1-7.north east) -- (m-1-5.north west) -- cycle;
\end{tikzpicture}
\]
Finally, we show that the equality in \cref{eq:binomv2} will be achieved an odd numbers of times. According to the analysis above, equality occurs when $c-p$ does not carry in the highest $k-v_2(c)$ bits. However, we can negate the top bit of $c-p$. We reconsider the example above where $c=168$. In such case, we can switch the top bit (indicated by the orange box) of $c=168$ to get $c'=c+2^k=232$, such that $v_2(\binom{101}{34})=v_2(\binom{101}{98})$.
\[
\begin{tikzpicture}[
    row 1/.style={font=\textsl,font=\scriptsize,black!85, anchor=west,
        inner sep=1.5pt},
    every node/.style={column sep=.5mm,row sep=.2mm}]
    \matrix (m) [matrix of math nodes,
        nodes in empty cells,
    ] 
    {
         &   &   &   &   & 1 &   &   &                   \\
        & 0 & 1 & 0 & 0 & 0 & 1 & 0 &[10mm]        c-p=34\\
    +    & 1 & 0 & 0 & 0 & 0 & 1 & 1 &        67\\ 
         & 1 & 1 & 0 & 0 & 1 & 0 & 1 &  q=101 \\                                                  
    };

    \draw[-,color=black,semithick] (m-3-2.south west) -- (m-3-8.south east);
    \draw[dashed,color=orange,semithick] (m-3-2.south west) -- (m-3-2.south east) -- (m-2-2.north east) -- (m-2-2.north west) -- cycle;
\end{tikzpicture}
\hspace{2cm}
\begin{tikzpicture}[
    row 1/.style={font=\textsl,font=\scriptsize,black!85, anchor=west,
        inner sep=1.5pt},
    every node/.style={column sep=.5mm,row sep=.2mm}]
    \matrix (m) [matrix of math nodes,
        nodes in empty cells,
    ] 
    {
         &   &   &   &   & 1 &   &   &  \\
         & 1 & 1 & 0 & 0 & 0 & 1 & 0 &[10mm] c'-p=98\\
    +    & 0 & 0 & 0 & 0 & 0 & 1 & 1 & 3\\ 
         & 1 & 1 & 0 & 0 & 1 & 0 & 1 & q=101 \\                                                  
    };

    \draw[-,color=black,semithick] (m-3-2.south west) -- (m-3-8.south east);
    \draw[dashed,color=orange,semithick] (m-3-2.south west) -- (m-3-2.south east) -- (m-2-2.north east) -- (m-2-2.north west) -- cycle;
\end{tikzpicture}
\]
Therefore, there is a pairing of the equality cases $c+2^k-p$ and $c-p$ when $c\neq u$. This ends the proof.
\end{proof}

Now we have all the tools to calculate the $2$-adic valuation of $c_1(Q_n)$.

\begin{proof}[Proof of \cref{thm:topcyclicQn}]
$n=1$ is trivial. Assume $n\geq 2$, and as usual denote $u=2^k$ as the largest power of $2$ smaller or equal to $n$. From \cref{lem:binomial} we know that $v_2(c_1(Q_n))=-\min\{v_2(F_n(1)), v_2(F_n(2)),\dots,v_2(F(n))\}$. First we show that we can rule out $F_n(1)$ and all $F_n(a)$ for $a>u$.
\begin{itemize}
    \item \underline{Claim 1:} $v_2(F_n(1))\geq v_2(F_n(2))$.
    From \cref{binomial2}, $v_2(F_n(1))=v_2\left(\binom{n-1}{u-1}\right)-n-k+1$. Similarly $v_2(F_n(2))=v_2\left(\binom{n-2}{u-2}\right)-n-k+1$. Therefore, $v_2(F_n(1))-v_2(F_n(2))=v_2\left(\binom{n-1}{u-1}\right)-v_2\left(\binom{n-2}{u-2}\right)=v_2\left(\frac{n-1}{u-1}\right)\geq0$ since $u$ is even.
    \item \underline{Claim 2:} $v_2(F_n(a)) > v_2(F_n(u))$ for all $a>u$. From \cref{binomial2}, $v_2(F_n(u))=-n+u-1-k$. In the definition of $F_n(a)$, however, all denominators $2(a+i)$ in the sum have $2$-adic value $ \leq k$. Therefore, $v_2(F_n(a))\geq -n+a-k > v_2(F_n(u))$.
\end{itemize}

As a result, we only need to consider the minimum of $v_2(F_n(2)),\dots,v_2(F_n(u))$, which by \cref{binomial2} is
\[v_2(c_1(Q_n))=-\min_{2\leq a\leq u}\left\{v_2(F_n(a))\right\}=n+1+k-\min_{2\leq a\leq u}\left\{a+v_2\left(\binom{n-a}{u-a}\right)\right\}.\]
We then compute the value of the last term in the equation above:
\begin{claim}
$$\min_{2\leq a\leq u}\left\{a+v_2\left(\binom{n-a}{u-a}\right)\right\}=\min_{2\leq a\leq u}\{a+k-v_2(n-a+1),2+k-v_2(n)\}.$$
\end{claim}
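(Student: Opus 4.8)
The plan is to translate everything through Kummer's Theorem and then reduce the claim to a clean inequality between two carry-counts. Write $u=2^k$ and $m:=n-u$, so $0\le m<2^k$. For $2\le a\le u$ set $j:=u-a\in\{0,\dots,u-2\}$; then $n-a=m+j$ and $u-a=j$, so Kummer's Theorem gives $v_2\binom{n-a}{u-a}=v_2\binom{m+j}{j}=c(m,j)$, the number of carries when adding $m$ and $j$ in base $2$. Writing $f(a):=a+v_2\binom{n-a}{u-a}=(u-j)+c(m,j)$, $g(a):=a+k-v_2(n-a+1)$, and $h:=2+k-v_2(n)$, the claim becomes $\min_a f(a)=\min(\min_a g(a),\,h)$, so the task is to prove two opposite inequalities.

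For $\min f\le\min(\min g,h)$ I would prove a carry-confinement lemma. Put $t:=v_2(n-a+1)=v_2(m+j+1)$. Then the lowest $t$ bits of $m+j$ are all $1$ while bit $t$ is $0$; reading the addition from the bottom shows inductively that there is no carry out of positions $0,\dots,t-1$ (each contributes a single $1$) and hence no carry into position $t$. Since $m+j\le(2^k-1)+(2^k-2)<2^{k+1}$, there is also no carry out of position $k$. Thus every carry occurs in one of the positions $t,\dots,k-1$, giving $c(m,j)\le k-t$, i.e.\ $f(a)\le g(a)$ for every $a$, so $\min f\le\min g$. Evaluating at $a=2$ (so $j=2^k-2$) I would then compute the carries of $m+(2^k-2)$ directly from the binary form of $m$: when $v_2(n)=v_2(m)=:s\ge 1$ a carry is triggered at position $s$ and propagates through position $k-1$, giving exactly $k-s$ carries and $f(2)=2+k-s=h$; in the remaining cases one checks $f(2)\le h$. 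Hence $\min f\le h$, and combining the two bounds gives $\min f\le\min(\min g,h)$.

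For the reverse inequality it suffices to show $f(a)\ge\min(\min g,h)$ for every $a$. If the bound $c(m,j)\le k-t$ is tight then $f(a)=g(a)\ge\min g$ and we are done; the remaining and main case is when the bound is strict (some position in $\{t,\dots,k-1\}$ produces no carry), where the plan is to show $f(a)\ge h$ by an exchange argument. Let $p$ be the highest non-carrying position in $\{t,\dots,k-1\}$; if its $j$-bit is $0$, flipping it to $1$ creates a carry at $p$ whose cascade is absorbed by the already-carrying higher positions, so exactly one carry is added while $u-j$ drops by $2^p$, strictly decreasing $f$ for $p\ge 1$. Thus a non-tight minimizer can be driven toward $j=2^k-2$, the witness at which $f=h$. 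The genuine obstacle is the bookkeeping of this descent: controlling the low-order positions (where $2^p$ no longer dominates the single added carry), the subcase where the critical $j$-bit is already $1$, and the boundary constraint $j\le 2^k-2$, so as to guarantee termination exactly at a tight configuration or at $j=2^k-2$. Once $\min f=\min(\min g,h)$ is in hand, the substitution $x=n-a+1$ turns $\min_a g(a)$ into $n+1+k-\max_x\{x+v_2(x)\}$ and $h$ into $n+1+k-(n-1+v_2(n))$, which is precisely the form feeding into \cref{thm:topcyclicQn}.
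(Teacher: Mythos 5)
Your forward direction ($\min f\le\min(\min g,h)$ in your notation) is correct and is a genuinely different route from the paper's: the carry-confinement bound $c(m,j)\le k-t$ with $t=v_2(n-a+1)$, giving $f(a)\le g(a)$ pointwise, together with the direct evaluation $f(2)\le h$ (with equality when $n$ is even), does go through, including the edge cases $t=k$ and $m=0$. The paper never isolates such a pointwise comparison; it proves equality of the two minima interval by interval.

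The reverse direction, however, has a genuine gap, and it is not merely the bookkeeping you flag: the intermediate claim on which your strict case rests — that strictness of $c(m,j)\le k-t$ forces $f(a)\ge h$ — is false. Take $n=2^k+1$ with $k\ge 3$, say $n=9$, $u=8$, $m=1$, and $a=3$, so $j=5$. Then $t=v_2(7)=0$ and $c(1,5)=v_2\binom{6}{5}=1<3=k-t$, so the configuration is strict, yet $f(3)=3+1=4$ while $h=2+k-v_2(n)=5$. Moreover the failure occurs exactly in the subcase you identified as the obstacle: the only carry is at position $0$, the highest non-carrying position is $p=2$, and $j_2=1$, so your exchange move is unavailable — and no repaired bookkeeping can rescue the descent toward $j=2^k-2$, because its target inequality is false. (The Claim itself survives here only because $f(3)\ge\min g$, with $g(2)=2$; that is, in the strict case one must sometimes compare against $\min g$ attained at a \emph{different} value of $a$, which your dichotomy does not permit.) The paper avoids this global difficulty by localizing: writing $n=2^{p_1}+\cdots+2^{p_d}$ with partial sums $n_i$, it partitions $[2,u]$ into $[2,2^{p_1}]$ (when $n$ is even) and the intervals $(n_i,n_{i+1}]$, and on each piece uses Kummer's Theorem to prove the uniform lower bound $v_2\left(\binom{n-a}{u-a}\right)\ge k-p_{i+1}$ — on that range $u-a$ has its top $k-p_{i+1}$ bits all equal to $1$, forcing that many carries — with equality at the left endpoint $a=n_i+1$ (respectively $a=2$). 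Since the right-hand side's minimum over the same piece is also $n_i+1+k-p_{i+1}$ (respectively $2+k-p_1$), the two sides match piece by piece; any correct completion of your argument has to recover this interval structure rather than the single global witness $a=2$.
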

To prove the claim, assume the binary expansion of $n$ is $n=2^{p_1}+2^{p_2}+\cdots + 2^{p_d}$ for $0\leq p_1<p_2<\cdots <p_d=k$. Denote $n_i=2^{p_1}+2^{p_2}+\cdots +2^{p_i}$ for $i=1,2,\dots,d-1$ and $n_d=u=2^{p_d}$. We only need to prove the following two subclaims:
\begin{enumerate}
    \item For any $i=1,2,\dots,d-1$, we have
    $$\min_{n_i< a\leq n_{i+1}}\left\{a+v_2\left(\binom{n-a}{u-a}\right)\right\}=n_i+1+k-p_{i+1}=\min_{n_i< a\leq n_{i+1}}\{a+k-v_2(n-a+1)\}.$$
    The first equation comes from the fact that when subtracting $u-a$ from $n-a$, since $n-a$ and $n-u$ are the same in all but the last $p_{i+1}+1$ bits, and $u-a$ have all $1$ except in the last $p_{i+1}+1$ bits, there are always $k-p_{i+1}$ carries in the first $k-p_{i+1}$ bits. By Kummer's Theorem, $v_2\left(\binom{n-a}{u-a}\right)\geq k-p_{i+1}$, and equality is achieved when $a=n_i+1$. The second equation comes from the fact that $v_2(n-a+1)\leq p_{i+1}$ for $a\in(n_i,n_{i+1}]$ and minimum is reached when $a=n_i+1$. 
    \item When $n$ is even, we have
    $$\min_{2\leq a\leq 2^{p_1}}\left\{a+v_2\left(\binom{n-a}{u-a}\right)\right\}=2+k-p_1=\min_{2\leq a\leq 2^{p_1}}\{a+k-v_2(n-a+1),2+k-v_2(n)\}.$$
    The first equation comes from the fact that, similar to case 1, there are always $k-p_1$ carries in the first $k-p_1$ bits, so $v_2\left(\binom{n-a}{u-a}\right)\geq k-p_{1}$, and equality is achieved when $a=2$. The second equation comes from the fact that $v_2(n-a+1)\leq p_{1}$ for $a\in[2,2^{p_1}]$ and the minimum is reached at $2+k-v_2(n)$. 
\end{enumerate}
By combining the two cases, we have the claim, and using the formula from before:
$$v_2(c_1(Q_n))=\max_{2\leq a\leq n}\{v_2(x)+x,v_2(n)+n-1\}$$
\end{proof}

We now work towards the proof of \cref{thm:2ndtopcyclicQn}. Note that finding the $2$nd largest cyclic factor of $K(Q_n)$ is the same as 
finding the maximal additive order in the quotient $\Z$-module $(\Z[x_1,\dots, x_n]/I(Q_n))/\langle x_1 - 1 \rangle$. Here $\langle\cdot\rangle$ represents the $\Z$-submodule generated by ``$\cdot$'' instead of the ring ideal generated by it. The elements of finite additive order are still the elements that are in the kernel of the map $\pi:x_i \mapsto 1$ after the quotient, and it turns out that the second largest Sylow-$2$ cyclic factor is the order of $x_2 - 1$ in the new quotient $\Z$-module, which we prove in the next lemma.

In the rest of this section, equations of polynomials $p(x_1,\dots,x_n)=q(x_1,\dots,x_n)$ means they are equal in the quotient ring $\Z[x_1,\dots, x_n]/I(Q_n)$, and the \emph{order} of a polynomial $p(x_1,\dots,x_n)$ is the additive order of $\overline{p(x_1,\dots,x_n)}\in\Z[x_1,\dots,x_n]/I(Q_n)$, unless explicitly specified otherwise.

To find the order of $x_2-1$ in the quotient $\Z$-module, we want to find $C,D \in \Z$ such that $C(x_1 - 1) + D(x_2 - 1)=0$,  with $D>0$ as small as possible. 
The symmetry of the hypercube allows us to deduce the following nice result even though it is not {\em a priori} obvious which value should $C$ be.

\begin{lemma}
For $2 \le k \le n-1$, the $k$th largest cyclic factor of $K(Q_n)$ is equal to the smallest positive integer $D$ such that $D(x_k - x_1)=0$. In the notation above, this means we can choose $C = -D$.
\end{lemma}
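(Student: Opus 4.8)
The plan is to turn the determination of $c_k$ into an order-of-element computation and then exploit the transposition symmetry of $Q_n$ to force the minimizing relation onto the ``antidiagonal'' $C=-D$. Write $H_{j}=\langle x_1-1,\dots,x_{j}-1\rangle\subseteq K(Q_n)$. Granting the reduction set up just before the lemma---that for $k\le n-1$ the $k$th invariant factor is carried by the degree-one classes, so that $c_k$ equals the additive order of $\overline{x_k-1}$ in $K(Q_n)/H_{k-1}$ (this is exactly why the range stops at $n-1$: the degree-one elements $x_1-1,\dots,x_{n-1}-1$ account for precisely the top $n-1$ of the $2^{n-1}-1$ Sylow-$2$ factors, the remainder coming from higher squarefree monomials)---I first record the clean restatement $c_k=\min\{D>0 : D(x_k-1)\in H_{k-1}\text{ in }K(Q_n)\}$, where the appeal to the $S_n$-symmetry fixing $H_{k-1}$ and permuting $\overline{x_k-1},\dots,\overline{x_n-1}$ legitimizes the choice of $x_k$ exactly as $x_2$ is chosen in the $k=2$ discussion.

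Next I would pass to the relation lattice $\Gamma_k=\ker\!\big(\Z^k\to K(Q_n),\ e_i\mapsto x_i-1\big)$, which is stable under the coordinate action of $S_k$ because permuting the variables is a symmetry of $K(Q_n)$ and commutes with $e_i\mapsto x_i-1$. In this language $c_k$ is the least positive value of the last coordinate over $\Gamma_k$, while $N:=\on{ord}(x_k-x_1)$ is the least positive $t$ with $t(e_k-e_1)\in\Gamma_k$. One divisibility is then immediate: since $x_1-1\in H_{k-1}$, the identity $N(x_k-1)=N(x_1-1)$ exhibits $N(x_k-1)\in H_{k-1}$ through the single choice $C=-D$, so $c_k\mid N$; equivalently $\overline{x_k-x_1}=\overline{x_k-1}$ in $K(Q_n)/H_{k-1}$ is an element of order $c_k$, giving $c_k\mid N$ again.

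The heart of the matter is the reverse divisibility $N\mid c_k$, i.e. that the minimum defining $c_k$ is attained antidiagonally. Starting from a minimal relation $c_k(x_k-1)=\sum_{j<k}m_j(x_j-1)$ and applying the transposition $(j\,k)$ for each $j<k$, subtracting the image from the original yields $(c_k+m_j)(x_k-x_j)=0$, hence $N\mid c_k+m_j$ for every $j$. Substituting $m_j\equiv -c_k\pmod N$ back into the relation gives $c_k\sum_{j\le k}(x_j-1)\in N\,H_{k-1}$. To convert this congruence into $N\mid c_k$ I would analyze $\Gamma_k$ through the $S_k$-decomposition $\Q^k=\Q\,\mathbf 1\oplus V$ into the trivial line and the irreducible standard representation $V=\{\sum a_i=0\}$: the antidiagonal relations lie in $V$ with minimal scale exactly $N$, and the only obstruction to descending to a purely antidiagonal relation is the order of the symmetric element $\sum_{j\le k}(x_j-1)$. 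Here the hypothesis $2\le k\le n-1$ is indispensable, since $\mathbf 1_k\in\Gamma_k$ if and only if $k=n$ (the global relation $\sum_{j=1}^n(x_j-1)=0$), which is precisely why the top factor $c_n$ obeys the different formula of \cref{conj:nthtopcyclicQn} and is excluded here.

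I expect this last step---upgrading $c_k\sum_{j\le k}(x_j-1)\in N\,H_{k-1}$ to the equality $c_k=N$---to be the main obstacle, because it amounts to controlling the ``glue'' between the trivial and standard components of the $S_k$-stable lattice $\Gamma_k$, equivalently bounding $\on{ord}\!\big(\sum_{j\le k}(x_j-1)\big)$ below by a multiple of $N$. This is the one place where the coordinate symmetry alone does not suffice and the detailed arithmetic of $Q_n$---ultimately the Kummer/binomial estimates of the following subsection, which compute $N=\on{ord}(x_k-x_1)$ outright---must be brought in. Once $c_k=N$ is established, the assertion that we may take $C=-D$ is automatic, since the solution realizing $\on{ord}(x_k-x_1)$ is exactly $L(G)v=D(x_k-x_1)=-D(x_1-1)+D(x_k-1)$.
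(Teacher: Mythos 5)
Your setup, the easy divisibility $c_k \mid N$ (where $N := \on{ord}(x_k - x_1)$), and the partial relations $N \mid c_k + m_j$ obtained from the transpositions $(j\,k)$ are all correct, but the proposal has a genuine gap exactly where you flag it: you never prove the reverse divisibility $N \mid c_k$. The congruence $c_k\sum_{j\le k}(x_j-1) \in N H_{k-1}$ that your substitution produces is left unresolved, and the tools you propose to close it do not apply: the Kummer/binomial estimates of this section (\cref{binomial2}) compute orders of elements such as $x_k - x_1$, not the order of the symmetric element $\sum_{j\le k}(x_j-1)$ modulo $N H_{k-1}$, which is what your route would require. As written, the proposal establishes only one of the two divisibilities, so it does not prove the lemma.

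The missing idea --- and the entire content of the paper's proof --- is to permute through a variable \emph{outside} $\{x_1,\dots,x_k\}$, which exists precisely because $k \le n-1$. Starting from a minimal relation $r_1(x_1-1) + \cdots + r_{k-1}(x_{k-1}-1) + c_k(x_k-1) \in \Img(L(G))$, apply to this single relation two permutations: one sending $x_1 \mapsto x_n$ and fixing $x_k$, and one sending $x_1 \mapsto x_n$ and $x_k \mapsto x_1$. Both images lie in $\Img(L(G))$ by the $S_n$-symmetry of $Q_n$, and they agree in every term except the one carrying $c_k$; subtracting therefore cancels all the unknown coefficients $r_j$ at once and leaves $c_k(x_k - x_1) \in \Img(L(G))$, i.e.\ $N \mid c_k$. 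Your transpositions act within the support of the relation, which is why the unknowns $m_j$ survive into your conclusion and force the unresolved ``glue'' term. In particular, your diagnosis that coordinate symmetry alone cannot finish the argument is incorrect: symmetry does suffice, provided the permutations route through the spare variable $x_n$. This is also the true role of the hypothesis $k \le n-1$ in the paper --- guaranteeing a spare variable --- rather than (only) the nonvanishing of $\sum_{j\le k}(x_j-1)$ that your decomposition singles out.
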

\begin{proof}
To prove this result, we induct on $k$ and adopt the following strategy:
\begin{itemize}
    \item We first show that the order of $x_k-1$ in $(\Z[x_1,\dots, x_n]/I(Q_n))/\langle x_1-1,x_2-1,\dots,x_{k-1}-1 \rangle$ is equal to the smallest positive integer $D$ such that $D(x_k - x_1)=0$.
    \item Then we show that $x_k-1$ has the maximal additive order in the quotient $\Z$-module above. As a result, the $D$ above is exactly the $k$th largest cyclic factor of $K(Q_n)$.
\end{itemize}

The base case is $k = 2$. The order of $x_2-1$ in the quotient $\Z$-module $(\Z[x_1,\dots, x_n]/I(Q_n))/\langle x_1 - 1 \rangle$ is the smallest positive integer $D$ such that $C(x_1 -1) + D(x_2 -1)=0$. Note, however, by symmetry that if $C(x_1 -1) + D(x_2 -1)=0$ then so  $C(x_3 -1) + D(x_1 -1), C(x_3 -1) + D(x_2 -1)=0$. Subtracting shows $D(x_2 - x_1)=0$. Conversely, if $D(x_2 - x_1) = -D(x_1 -1) + D(x_2 -1)=0$ then we can take $C = -D$ and so the order of $x_2-1$ in the quotient $\Z$-module must be the order of $x_2 - x_1$.

Then we show $x_2-1$ is the element with maximal additive order in the quotient $\Z$-module. Namely, we want to show that $D(x_I-1)\equiv 0\mod(x_1-1)$ for any squarefree monomial $x_I$. When $|I|\geq 2$, suppose $x_ix_j|x_I$. Then we have $D(x_I-1)=D(x_i-x_j)\frac{x_I}{x_i}+D\left(\frac{x_I}{x_ix_j}-1\right)=D\left(\frac{x_I}{x_ix_j}-1\right)$ which we can reduce inductively to the base case where $|I|\leq 1$.

For general $k$,
we wish to solve for the minimal $C_k$ such that there exist integer constants $r_1,\dots, r_{k-1}$ such that $r_1(x_1 - 1) + \dots +  r_{k-1}(x_{k-1} -1) + C_k(x_k -1) =0$. Since $k \le n-1$, $x_n$ is not amongst $x_1, \dots, x_k$, and so by symmetry we have that 
$r_1(x_n - 1) + r_2(x_2 -1) + r_3(x_3 - 1) + \dots +  r_{k-1}(x_{k-1} -1) + C_k(x_k -1)=0$ and $r_1(x_n - 1) + r_2(x_2 -1) + r_3(x_3 - 1) + \dots +  r_{k-1}(x_{k-1} -1) + C_k(x_1 -1)=0$. Subtracting yields $C_k(x_k - x_1)=0$, which implies that $C_k$ must just be the order of $x_k - x_1$, as desired. The proof that $C_k$ is the $k$th largest factor follows from the same argument in $k=2$ case. 
\end{proof}

\begin{rem}
The proof above heavily depends on the symmetry of $Q_n$. We wonder if similar arguments work to some extent as long as $G$ has some symmetry with respect to generator classes of edges, so that we might derive similar results for certain general Cayley graphs?
\end{rem}

Note that the order of $x_k - x_1$ is just the order of $x_kx_1 -1$, since $x_1^2 = 1$.
By symmetry, all these elements have the same additive order, so this lemma implies that the $2$nd through $(n-1)$st largest cyclic factors are all the same. It would thus suffice to compute the $2$nd largest cyclic factor.

\begin{proof}[Proof of \cref{thm:2ndtopcyclicQn}]
Using
\cref{orderofmonomial}, we want to find the lowest common denominator of the following numbers
\[
\frac{1}{2^{n-2}}\sum_{\substack{u \cdot v = 1\\ u_1 + u_k = 1}}\frac{1}{\lambda_{u}}.
\]
We will be once again using \cref{lem:spanlemma}, which tells us we need to find the lowest common denominator of \[\frac{1}{2^{n - |S|}}\sum_{\substack{u_S = 1\\ u_1+u_k = 1}}\frac{1}{\lambda_u}.\] By plugging in the values for $\lambda_u$ we obtain the following analogue of \cref{lem:binomial}.

\begin{claim} $c_2(Q_n)$ is the the lowest common denominator of the following rational numbers:
\[\frac{1}{2^{n-a-1}}\sum_{i=0}^{n-a-1}\frac{\binom{n-a-1}{i}}{2(a+i)},\quad \text{for }1\leq a\leq n-1.\]
\end{claim}

The rational numbers in the lemma above are exactly the same as $\frac{1}{2}F_{n-1}(1), F_{n-1}(2), \dots, F_{n-1}(n-1)$ in the $c_1(Q_{n-1})$ case. The fraction in $\frac{1}{2}F_{n-1}(1)$ adds one to the last valuation $v_2(n-1)+(n-1)-1$. Following similar arguments, our factor is just equal to
\[
v_2(c_2(Q_n))=\max_{x < n} \{v_2(x) + x \}.
\]
\end{proof}

\begin{rem}\label{lastconjecture}
One can show that the $n$th largest cyclic factor of $K(Q_n)$ is the additive order of $x_1x_2-1$ in the quotient $\Z$-module $\Z[x_1,\dots, x_n]/(x_1^2-1, \dots, x_n^2-1,\sum x_i-n)/\langle x_1 - 1, x_2-1,\dots, x_n-1\rangle$. This corresponds to the smallest positive integer $D$ such that there exist integer constants $r_1,\dots,r_n$ where $D(x_1x_2-1)+r_1(x_1-1)+\cdots+r_n(x_n-1)=0$, but we are unable to nail down the exact values for $r_1,\dots,r_n$. However, we believe the value of $D$ should be given by the following conjecture, which is supported by data for $n\leq 11$.
\end{rem}

\begin{conjecture}
\label{conj:nthtopcyclicQn}
For $n\geq 3$, we have
$$v_2(c_n(Q_n))=\max\left\{\max_{x<n-1}\{v_2(x)+x\}, v_2(n-1)+n-3\right\}.$$
\end{conjecture}

\section{Conclusion and Remaining Conjectures}
In this paper we have analyzed the sandpile groups of Cayley graphs by careful looking at the underlying structure of cyclic factors in the corresponding polynomial ring. While our results partially characterize $\on{Syl}_2(K(G))$, determining the complete structure still seems out of reach at this moment.

One possible route to extract finer information could be via finding Gr\"{o}bner bases for these polynomials. However, we were not able to find strong patterns in the Gr\"{o}bner bases even for $K(Q_n)$, and had difficulty working over $\Z$ with these objects. Nevertheless, this appears to be a place for further exploration.

We list remaining conjectures we have gathered based on data. %

\begin{conjecture}
The number of Sylow-$2$ cyclic factors $d(M)$ is odd unless all of the eigenvalues have the same power of $2$, in which case $d(M) = 2^n - 2$.
\end{conjecture}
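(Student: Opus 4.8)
The plan is to work entirely inside the mod-$2$ ring $A := \Z/2\Z[u_1,\dots,u_r]/(u_1^2,\dots,u_r^2)$. By the substitution $u_i = x_i-1$ of \cref{thm:dMgenericc} we have $d(M)=\dim_{\Z/2\Z}R-1$ with $R=A/(p)$ and $p=\sum_{\emptyset\neq K\subseteq[r]}N_K\,u_K$, where $u_K:=\prod_{k\in K}u_k$ and $N_K\equiv\#\{v\in M: s(v)\supseteq K\}\pmod 2$; note $p$ has zero constant term. The algebra $A$ is Frobenius: the pairing $\langle a,b\rangle:=(\text{coefficient of }u_{[r]}\text{ in }ab)$, where $u_{[r]}=u_1\cdots u_r$, is nondegenerate and associative with socle spanned by $u_{[r]}$. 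For any ideal $I$ one has $\on{Ann}(I)=I^{\perp}$, so $\dim_{\Z/2\Z}\on{Ann}(p)=2^r-\dim_{\Z/2\Z}(p)$. Since multiplication by $p$ has image $(p)$ and kernel $\on{Ann}(p)$, this gives $\dim R=\dim\on{Ann}(p)=2^r-\dim(p)$, whence $d(M)=2^r-1-\dim(p)$ and therefore $d(M)$ is odd if and only if $\dim_{\Z/2\Z}(p)$ is even.

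Next I would compute the parity of $\dim(p)$ via the symmetric bilinear form $B(x,y):=\langle x,py\rangle$. Self-adjointness of multiplication by $p$ shows that the radical of $B$ is exactly $\on{Ann}(p)$, so $\on{rank}(B)=2^r-\dim\on{Ann}(p)=\dim(p)$. Using $x^2=(\text{constant term of }x)$ in $A$, one finds $B(x,x)=c_\emptyset(x)\cdot c_{[r]}(p)$, where $c_{[r]}(p)$ is the coefficient of $u_{[r]}$ in $p$; hence $B$ is alternating precisely when $c_{[r]}(p)=0$, and over $\Z/2\Z$ an alternating form has even rank. So if $c_{[r]}(p)=0$ then $\dim(p)$ is even and $d(M)$ is odd. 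If instead $p$ has a monomial of degree $<r$, the minimal-degree argument of \cref{prop:evenswitchtoodd} shows $u_{[r]}\in(p)$, so $(p)=(p')$ with $p':=p-c_{[r]}(p)\,u_{[r]}$ having no top term; applying $B$ to $p'$ gives $\dim(p)=\dim(p')$ even. Thus $d(M)$ is odd unless $p$ has no monomial of degree $<r$, i.e. unless $p\in\{0,\,u_{[r]}\}$.

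It then remains to identify these two exceptional polynomials and connect them to the eigenvalues. For a simple matroid $M$ the zeta transform $N_K=\sum_{s(v)\supseteq K}\mu_v$ is invertible over $\Z/2\Z$ by Möbius inversion on the Boolean lattice, $\mu_v\equiv\sum_{K\supseteq s(v)}N_K$. Here $p=0$ forces $M=\emptyset$ (excluded), while $p=u_{[r]}$ (that is $N_{[r]}=1$ and all other $N_K=0$) forces $\mu_v=1$ for every $v\neq\mathbf0$, so $M=\FF_2^r\setminus\{\mathbf0\}$ and $G=K_{2^r}$; in this case $\dim(p)=1$, giving $d(M)=2^r-2$ (the exponent in the statement should read $2^r$), consistent with $K(K_{2^r})=(\Z/2^r\Z)^{2^r-2}$. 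Finally I would show this is exactly the equal-valuation case. Writing $\lambda_u=2S_u$ with $S_u=\#\{v\in M: u\cdot v=1\}>0$ (positivity being connectedness), summation gives $\sum_{u\neq\mathbf0}S_u=n\,2^{r-1}$. If all $S_u$ share one valuation $t$, then because there are $2^r-1$ (odd) summands each of the form $2^t\cdot(\text{odd})$ we get $t=v_2(n)+r-1$; but each $S_u\geq 2^t$, so $n\,2^{r-1}\geq(2^r-1)2^{\,v_2(n)+r-1}$, forcing $n=2^r-1$ and $G=K_{2^r}$. The converse is immediate, since $K_{2^r}$ has every $\lambda_u=2^r$. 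Assembling the three parts yields the conjecture.

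The main obstacle I expect is the Frobenius bookkeeping of the second paragraph: one must verify carefully that the radical of $B$ equals $\on{Ann}(p)$ and that the passage from $p$ to $p'$ genuinely preserves $\dim(p)$, so that the parity of $d(M)$ is governed solely by the top coefficient $c_{[r]}(p)$. Once that parity statement is secured, the Möbius classification and the averaging bound $\sum_{u\neq\mathbf0}S_u=n\,2^{r-1}$ are routine.
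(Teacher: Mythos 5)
This statement is one of the paper's closing conjectures, stated without proof, so there is no in-paper argument to compare against; your proposal has to stand on its own. Its algebraic core does stand: the Frobenius-algebra computation is correct and is a genuinely new contribution. The pairing $\langle a,b\rangle$ on $A=\Z/2\Z[u_1,\dots,u_r]/(u_1^2,\dots,u_r^2)$ is nondegenerate (monomials pair $u_S$ with $u_{[r]\setminus S}$), $\on{Ann}(p)=(p)^{\perp}$ gives $d(M)=2^r-1-\dim_{\Z/2\Z}(p)$, the form $B(x,y)=\langle x,py\rangle$ has radical $\on{Ann}(p)$ and satisfies $B(x,x)=c_\emptyset(x)\,c_{[r]}(p)$, and combined with the minimal-degree trick of \cref{prop:evenswitchtoodd} this shows $\dim_{\Z/2\Z}(p)$ is even, hence $d(M)$ odd, unless $p\in\{0,u_{[r]}\}$. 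But note what this actually proves. M\"obius inversion over $\Z/2\Z$ only yields $\mu_v\equiv 1\pmod 2$, not $\mu_v=1$: the exceptional class for $p=u_{[r]}$ is ``every multiplicity is odd,'' not literally $G=K_{2^r}$, and then \cref{prop: multiplicitiesindependentmod2} gives $d(M)=d(K_{2^r})=2^r-2$. So your method establishes: \emph{$d(M)$ is odd unless all multiplicities of $M$ are odd, in which case $d(M)=2^r-2$} (the conjecture's exponent $2^n-2$ is indeed a typo for $2^r-2$).

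The genuine gap is your final paragraph, which identifies this exceptional class with the conjecture's equal-valuation condition; that identification is false, and in fact the conjecture as literally stated is false, so no argument can close this step. Take $r=2$ and multiplicities $(a,b,c)=(1,1,3)$, which are coprime: all multiplicities are odd, so your parity theorem (or \cref{theoremr=2}) gives $d(M)=2$, even, yet the nonzero eigenvalues are $8,8,4$, which do not share a $2$-adic valuation. Hence ``not all eigenvalues have the same power of $2$'' does not imply ``$d(M)$ odd.'' Your specific deduction also fails on its own terms: the inequality $n\,2^{r-1}\geq(2^r-1)2^{v_2(n)+r-1}$ only gives $n\geq(2^r-1)2^{v_2(n)}$, which does not force $n=2^r-1$; for $r=2$, $(a,b,c)=(1,1,5)$, $n=7$, all eigenvalues $12,12,4$ have valuation exactly $2$ but $G\neq K_4$. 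What \emph{can} be salvaged is the converse implication: summing $S_u$ over a punctured hyperplane $\{u\neq 0:u\cdot w=0\}$ gives $2^{r-2}(n-\mu_w)$, an odd number of summands of the form $2^t\cdot(\text{odd})$, whence $v_2(n-\mu_w)=v_2(n)+1$ and $v_2(\mu_w)=v_2(n)$ for every $w$; with $\gcd$ of multiplicities $1$ this forces all $\mu_w$ odd, so the clause ``equal valuations $\Rightarrow d(M)=2^r-2$'' is true. The honest conclusion of your work is therefore a corrected theorem --- $d(M)$ is odd unless all multiplicities are odd, in which case $d(M)=2^r-2$ --- together with a counterexample showing the eigenvalue formulation of the paper's conjecture needs repair; you should present it that way rather than as a proof of the stated conjecture.
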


\begin{conjecture}
When the greatest common divisor of all generator multiplicities in $M$ is $1$, the sandpile group depends only on the set of eigenvalues generated by $M$, and independent of the specific choice of $M$.
\end{conjecture}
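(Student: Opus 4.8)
The plan is to separate the problem by primes and concentrate entirely on the Sylow-$2$ part, since \cref{prop:pnot2} already expresses $\Syl_p K(G)$ for odd $p$ as $\Syl_p\big(\bigoplus_{u\neq 0}\Z/\lambda_u\Z\big)$, which is manifestly a function of the multiset $\{\lambda_u\}$ alone. Using \cref{multLemma} I would first normalize so that the multiplicities are coprime (this is exactly the hypothesis): inflating all multiplicities by a factor $C$ scales every eigenvalue and every invariant factor by $C$, so coprimality is what prevents two generating sets related by a common dilation from being conflated. The remaining and essential content is therefore to prove that $\Syl_2 K(G)$ is determined by the multiset $\{\lambda_u : u\in\FF_2^r\setminus\{0\}\}$.

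To set this up cleanly I would pass to the $2$-adic integers and use the ring model of \cref{cokerIso}. Writing $\mathcal{O}=\Z_2[x_1,\dots,x_r]/(x_1^2-1,\dots,x_r^2-1)=\Z_2[\FF_2^r]$, which is a commutative complete local ring because $\FF_2^r$ is a $2$-group, \cref{cokerIso} gives $\big(\Z\oplus K(G)\big)\otimes\Z_2\cong \mathcal{O}/(\theta)$ with $\theta=n-\sum_i\prod_j x_j^{(v_i)_j}$. The $2^r$ characters $x_j\mapsto\pm1$ extend to $\Q_2$-algebra maps and identify $\mathcal{O}\otimes\Q_2\cong\Q_2^{2^r}$, under which $\theta$ maps to the eigenvalue vector $(\lambda_u)_u$; equivalently, in the orthogonal eigenbasis of \cref{changeofbasis} one has $L(G)=2^{-r}HDH$ with $H_{u,v}=(-1)^{u\cdot v}$ satisfying $H=H^{\top}$, $H^2=2^rI$, and $D=\diag(\lambda_u)$. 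The normalization $\widetilde{\mathcal{O}}=\prod_{u}\Z_2$ (the image of $\mathcal{O}$ in $\mathcal{O}\otimes\Q_2$) gives the ``easy'' quotient
\[
\widetilde{\mathcal{O}}/(\theta)\;\cong\;\Z_2\oplus\bigoplus_{u\neq 0}\Z/2^{v_2(\lambda_u)}\Z,
\]
which depends only on the multiset of eigenvalues. The whole problem is thus to transport this across the inclusion $\mathcal{O}\hookrightarrow\widetilde{\mathcal{O}}$, whose cokernel $\widetilde{\mathcal{O}}/\mathcal{O}$ is a fixed $2$-group governed by $H$ alone and independent of $\theta$.

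Concretely I would pursue two parallel attacks. The first is homological: feed $\theta$ through the conductor (Milnor) square attached to $\mathcal{O}\hookrightarrow\widetilde{\mathcal{O}}$ to present $\mathcal{O}/(\theta)$ as an extension assembled from $\widetilde{\mathcal{O}}/(\theta)$ and correction terms supported on $\widetilde{\mathcal{O}}/\mathfrak{c}$, where $\mathfrak{c}$ is the conductor, the aim being to show each correction depends on $\theta$ only through an image that is itself a multiset invariant. The second is computational, via determinantal divisors: by Cauchy--Binet every $k\times k$ minor of $HDH$ is
\[
\det\!\big((HDH)[R,C]\big)=\sum_{|T|=k}\det H[R,T]\;\Big(\prod_{t\in T}\lambda_t\Big)\;\det H[T,C],
\]
so the $k$-th determinantal divisor of $L(G)$ is, up to a fixed power of $2$, the $\gcd$ over all $R,C$ of these $\Z$-combinations of the products $\prod_{t\in T}\lambda_t$. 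Since $\gcd_{|T|=k}\prod_{t\in T}\lambda_t$ is visibly a multiset invariant, it would suffice to show the Hadamard coefficients $\det H[R,T]$ are rich enough that the $\gcd$ over all $R,C$ recovers exactly this eigenvalue-product $\gcd$ (again up to a universal power of $2$).

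The hard part — and the reason this is stated as a conjecture — is precisely the arrangement-dependence that both approaches must defeat: the eigenvalues enter every local computation (the face sums of \cref{corollary: topcyclic}, the conductor corrections, or the Hadamard-weighted minors) through the specific assignment $u\mapsto\lambda_u$, not through the multiset, and two generating sets realizing the same multiset need not be $\mathrm{GL}_r(\FF_2)$-equivalent — these being the only permutations of the arrangement automatically induced by integral unimodular changes of basis. I expect the crux to be proving that the $2$-adic $\gcd$ of the Hadamard-weighted minors (equivalently, the conductor corrections) collapses to a symmetric function of $\{\lambda_u\}$, and that coprimality of the multiplicities is exactly what forces the minimal-valuation minors to be primitive enough for this collapse to occur — mirroring how \cref{thm:dMgenericc} already shows the count $d(M)$ is controlled solely by which eigenvalues satisfy $v_2(\lambda_u)=1$, a quantity read off directly from the multiset.
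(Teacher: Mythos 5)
You should first be aware that the paper contains no proof of this statement: it appears in the concluding section as an open conjecture, supported only by the Smith normal form data in the appendix, so your proposal can only be judged on its own merits. On those merits it is a research plan, not a proof. The reductions you perform are correct but are the easy part: \cref{prop:pnot2} does dispose of the odd primes, and the normalized quotient $\widetilde{\mathcal{O}}/(\theta)\cong\Z_2\oplus\bigoplus_{u\neq 0}\Z_2/2^{v_2(\lambda_u)}\Z_2$ is indeed a multiset invariant. But both of your attacks stop exactly where the conjecture begins: in the Milnor-square approach you never compute the conductor or the correction terms, let alone show that they depend on $\theta$ only through the multiset of eigenvalues; in the determinantal-divisor approach, the statement you say ``would suffice'' is asserted, not argued. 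Your own closing paragraph concedes that the central collapse is unproven, so there is no disagreement that the key step is missing.

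Worse, the sufficient condition you propose for the second attack is false, already for $k=1$. Since $L(G)=2^{-r}HDH$, the entries of $HDH$ are just $2^r$ times Laplacian entries:
\begin{equation*}
(HDH)_{v,w}\;=\;\sum_{u}(-1)^{u\cdot(v+w)}\lambda_u\;=\;
\begin{cases}
2^r n & v=w,\\
-2^r a_{v+w} & v\neq w,
\end{cases}
\end{equation*}
where $a_x$ denotes the multiplicity of the generator $x$. Under your coprimality hypothesis the gcd of these entries is therefore always exactly $2^r$. By contrast, $\gcd_{|T|=1}\prod_{t\in T}\lambda_t=\gcd_{u\neq 0}\lambda_u$ equals $2$ for $Q_3$ (eigenvalues $0,2,2,2,4,4,4,6$) but $8$ for $K_8$ (eigenvalues $0,8,\dots,8$), both of which have coprime multiplicities; the discrepancy factor is $4$ in one case and $1$ in the other, and examples with the same $n$ exist as well (the paper's $Q_{3,\on{vert}}$ versus the generating set $\{100,010,001,111\}$ give factors $4$ and $2$). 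So there is no universal power of $2$ relating the Hadamard-weighted gcd to the eigenvalue-product gcd, and that proposed step would fail. What the computation actually shows is that the low determinantal divisors are governed by the multiplicities $a_x$, i.e.\ by the inverse Fourier transform $a_v=-2^{-r}\sum_{u\neq 0}(-1)^{u\cdot v}\lambda_u$ of the eigenvalue \emph{assignment} --- precisely the data both of your approaches discard. This points to the structural gap: neither attack ever uses the fact that the assignment $u\mapsto\lambda_u$ is realizable, i.e.\ that every $a_v$ computed this way is a nonnegative integer. That constraint genuinely restricts which rearrangements of a multiset occur (placing the three eigenvalues $2$ of $Q_3$ on the three nonzero vectors of a $2$-dimensional subspace forces $a_v=\tfrac12$), and any correct proof must consume it, along with the $\gcd=1$ hypothesis --- whose role, incidentally, you misdescribe: a common dilation of $M$ changes the eigenvalue multiset, so dilated pairs are already distinguished; the hypothesis instead excludes imprimitive $M$ whose eigenvalue multiset coincides with that of some primitive $M'$. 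Identifying where realizability and primitivity enter is the missing idea, not a bookkeeping detail.
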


\begin{conjecture}
Two Cayley graphs have the same sandpile group if and only if their generator multiplicities are the same up to $GL$-equivalence.
\end{conjecture}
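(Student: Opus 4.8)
The plan is to split the biconditional. The reverse implication is immediate from the $\mathrm{GL}_r(\FF_2)$-invariance recorded earlier: if $\mu' = \mu \circ T^{-1}$ for some $T \in \mathrm{GL}_r(\FF_2)$ then $K(G(\FF_2^r, M)) \cong K(G(\FF_2^r, T \circ M))$, so equal multiplicities up to $\mathrm{GL}$-equivalence force isomorphic sandpile groups. The entire content of the statement is therefore the forward implication: the abstract group $K(G)$ must recover the multiplicity function $\mu \colon \FF_2^r \setminus \{0\} \to \Z_{\geq 0}$ up to the action of $\mathrm{GL}_r(\FF_2)$ on $\FF_2^r$.

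My approach is to pass through the eigenvalue data and then invert the Fourier transform. Writing $\widehat{\mu}(v) = \sum_{u} \mu_u (-1)^{u \cdot v}$ (with $\mu_0 = 0$), the eigenvalues from the earlier lemma are $\lambda_v = n - \widehat{\mu}(v)$, and Fourier inversion gives $\mu_u = 2^{-r} \sum_v \widehat{\mu}(v)(-1)^{u \cdot v}$. Since $\sum_v \lambda_v = 2^r n$ recovers $n$ from the multiset $\{\lambda_v\}$, the multiset $\{\widehat{\mu}(v)\}$ is equivalent data to $\{\lambda_v\}$, and the $\mathrm{GL}_r(\FF_2)$-action permutes the $\lambda_v$ by acting on the index $v$. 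Thus the forward implication factors into two sub-problems: (a) show that the isomorphism type of $K(G)$ determines the multiset $\{\lambda_v : v \neq 0\}$; and (b) show that this multiset, together with the constraint that it is the spectrum of a nonnegative integer function, determines $\mu$ up to $\mathrm{GL}_r(\FF_2)$.

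For (a) I would assemble the arithmetic invariants the group already exposes: $|K(G)| = 2^{-r}\prod_{v \neq 0}\lambda_v$ is the order, and for each odd prime $p$ \cref{prop:pnot2} gives $\Syl_p K(G) \cong \Syl_p\big(\bigoplus_{v \neq 0} \Z/\lambda_v\Z\big)$, which recovers the multiset $\{v_2'(\lambda_v)\}_{v \neq 0}$ of $p$-adic valuations. The honest difficulty is that per-prime valuation multisets do not in general reconstruct a multiset of integers, since the cross-prime correlations are lost; one must break these ties using the Sylow-$2$ data, where the refined invariants of \cref{section: boundCyclicFactorSection} (the top cyclic factors via \cref{orderofmonomial,corollary: topcyclic} and the count $d(M)$) constrain the $2$-adic structure more rigidly than a bare valuation multiset. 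Establishing that these invariants suffice to pin down $\{\lambda_v\}$ is the first main obstacle, and is closely tied to the preceding conjecture that $K(G)$ depends only on the eigenvalue multiset.

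Sub-problem (b) is where I expect the real work. On the Fourier side, quotienting $\coker L(G)$ by $\langle x_k - 1\rangle$ (as in \cref{topcyclicQnsection}) corresponds to restricting the eigenvalue index to a hyperplane: the sub-multiset $\{\lambda_w : w \cdot c = 0\}$ is exactly the eigenvalue spectrum of the contracted multiplicity function on $\FF_2^{r-1}$ obtained by summing $\mu$ along the fibers of $\FF_2^r \to \FF_2^r/\langle c\rangle$. This suggests an induction on $r$: assume the spectrum determines the contracted $\mu$ up to symmetry on each hyperplane, then glue. The genuine obstacle is that neither the abstract group nor the bare multiset $\{\lambda_v\}$ comes with the labeling $v \mapsto \lambda_v$, so one cannot freely restrict to a hyperplane; the inductive step must simultaneously reconstruct a consistent $\mathrm{GL}$-labeling, and matching the independent $\mathrm{GL}$-ambiguities across different hyperplanes is precisely the rigidity the matroid phrasing of the surrounding conjectures is meant to encode. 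I expect this reconstruction — showing the Walsh--Hadamard spectrum of a nonnegative integer function determines it up to $\mathrm{GL}_r(\FF_2)$ — to be the crux; it is plausibly false once nonnegativity or integrality is dropped, so any successful argument must use those constraints essentially, and a search over small $r$ for spectrum-collisions that are not $\mathrm{GL}$-equivalent is the natural first sanity check before investing in the inductive machinery.
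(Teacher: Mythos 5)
You should note at the outset that this statement is one of the paper's concluding \emph{conjectures}: the authors offer no proof of it, only supporting computational data (see \cref{invariantfactordata}), so there is no paper argument to compare yours against. Judged on its own terms, your proposal is an honest research plan rather than a proof, and you have correctly identified the one part that is actually provable with the paper's tools: the reverse implication is immediate from the $\mathrm{GL}_r(\FF_2)$-invariance remark. But the forward implication is left entirely open in your sketch --- both sub-problems (a) and (b) are named as obstacles, not resolved --- and each of them contains the full difficulty of the conjecture. For (a), the invariants you list genuinely do not suffice as stated: \cref{prop:pnot2} recovers, for each odd prime $p$, only the multiset of $p$-adic valuations $\{v_p(\lambda_v)\}$, and as you concede the cross-prime correlations are lost; the Sylow-$2$ information actually established in the paper (the count $d(M)$, and the top one or two cyclic factors in the cases $r\le 3$ and $Q_n$ via \cref{orderofmonomial} and \cref{corollary: topcyclic}) pins down only a few coordinates of $\Syl_2$ and is far from reconstructing $\{\lambda_v\}$. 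No argument is offered for how these fragments break the cross-prime ties, so step (a) would fail as written.

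A second, structural problem is that your factorization through the spectrum proves (or requires) something strictly stronger than the conjecture, and it is logically entangled with the paper's other open conjectures in a way your plan does not control. If sub-problem (b) fails --- two non-$\mathrm{GL}$-equivalent nonnegative integer multiplicity functions with the same Walsh--Hadamard spectrum multiset --- then, combined with the paper's adjacent conjecture that the sandpile group depends only on the eigenvalue multiset, the present conjecture would itself be false; conversely, if that adjacent conjecture fails, then $K(G)$ need not determine $\{\lambda_v\}$ and step (a) is hopeless even in principle. So your route cannot succeed without simultaneously settling at least one of those open conjectures, which your sketch acknowledges but does not attempt. Your hyperplane-restriction observation (quotienting by $\langle x_k - 1\rangle$ restricts the spectrum to $\{w : w_k = 0\}$ and corresponds to contracting $\mu$ along a projection $\FF_2^r \to \FF_2^{r-1}$, as in \cref{topcyclicQnsection}) is a genuinely nice structural remark and a sensible basis for induction, but the labeling/gluing ambiguity you flag is exactly where the argument stops, so the proposal should be regarded as a plausible attack plan with its crux unproven, not as a proof.
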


\begin{conjecture}
The Sylow-$2$ component of the sandpile group for $Q_{2^k - 1}$ and $Q_{2^k}$ differs as follows: $Syl_2(K(Q_{2^k}))$ equals a top cyclic factor as determined in \cref{topcyclicQnsection} and then the remaining factors come from taking $Syl_2(K(Q_{2^k - 1}))$ and doubling the multiplicity of each factor. That is, we have
\[
\Syl_2(K(Q_{2^k})) \cong \Syl_2(K(Q_{2^k-1}))^2 \times \Z/(2^{2^k + k -1}\Z).
\]
\end{conjecture}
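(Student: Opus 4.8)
The plan is to realize the passage from $Q_{2^n-1}$ to $Q_{2^n}$ as an integrally controlled operation on Laplacians, and then peel off the two copies together with the top cyclic factor. Write $m=2^n$ and $L:=L(Q_{2^n-1})$. Since $Q_{2^n}$ is the Cartesian product of $Q_{2^n-1}$ with a single edge, ordering the two copies of $\FF_2^{2^n-1}$ consecutively gives the block form $L(Q_{2^n})=\left(\begin{smallmatrix} L+I & -I\\ -I & L+I\end{smallmatrix}\right)$. Because the off-diagonal blocks equal the unimodular matrix $-I$, one may perform integral block row and column operations (a Schur-complement reduction valid over $\Z$, not merely over $\Z[\tfrac12]$) to obtain
\[
\coker L(Q_{2^n})\;\cong\;\coker\big((L+I)^2-I\big)\;=\;\coker\big(L(L+2I)\big).
\]
The same identity can be read off from \cref{cokerIso}: eliminating the last variable $x_{2^n}=2^n-\sum_{i<2^n}x_i$ from the relation $x_{2^n}^2-1$ turns the defining ideal into $\big((\sigma-(2^n-1))(\sigma-(2^n+1))\big)$ with $\sigma=\sum_{i<2^n}x_i$, under which $\sigma-(2^n-1)\leftrightarrow L$ and $\sigma-(2^n+1)\leftrightarrow L+2I$.

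Next I would exploit that $L$ and $L+2I$ commute and that $L+2I$ is nonsingular (its eigenvalues are $2(w+1)$, $w=0,\dots,2^n-1$). Multiplication by $L+2I$ yields the short exact sequence
\[
0\longrightarrow \coker L \xrightarrow{\;\cdot(L+2I)\;}\coker\big(L(L+2I)\big)\longrightarrow \coker(L+2I)\longrightarrow 0,
\]
that is $0\to \Z\oplus K(Q_{2^n-1})\to \Z\oplus K(Q_{2^n})\to T\to 0$ with $T:=\coker(L+2I)$ finite. Comparing this with the augmentation $\mathrm{aug}\colon R_m\to\Z$ (whose kernel is the torsion, by \cref{maxCyclicLemma}), on which $L+2I$ acts as $\cdot 2$, the snake lemma splits the finite data into
\[
0\to K(Q_{2^n-1})\xrightarrow{\cdot(L+2I)} K(Q_{2^n})\to \coker\big(\cdot(L+2I)|_{K}\big)\to 0,
\]
\[
0\to \coker\big(\cdot(L+2I)|_{K}\big)\to T\to \Z/2\Z\to 0.
\]
Localizing at $2$ (an exact operation), the conjecture reduces to two assertions: the first sequence splits $2$-adically, and $\Syl_2\!\big(\coker(\cdot(L+2I)|_K)\big)\cong \Syl_2(K(Q_{2^n-1}))\oplus \Z/2^{\,2^n+n-1}\Z$.

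To control $T$ I would analyze $\coker(L+2I)$ by exactly the Kummer-theoretic machinery of \cref{topcyclicQnsection}: its eigenvalues $2(w+1)$ are the hypercube eigenvalues shifted by one, so the analogue of \cref{orderofmonomial} and \cref{lem:binomial} produces the same binomial sums with $b+i$ replaced by $b+i+1$, again evaluated by \cref{binomial2}. This should exhibit the unique largest $2$-adic factor of $T$ as the top cyclic factor $\Z/2^{\,2^n+n-1}\Z$ of $Q_{2^n}$ from \cref{corollary:asymptoticsharp}. The splitting I would then force structurally: by \cref{corollary:asymptoticsharp} every factor of $K(Q_{2^n-1})$ has exponent at most $2^{\,2^n-1}<2^{\,2^n+n-1}$, so the new top factor must split off, while the factor count of \cref{thm:dMgenericc} gives $2^{\,2^n-1}-1=2\big(2^{\,2^n-2}-1\big)+1$, matching ``two copies plus one'' exactly. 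Running this as an induction on $n$, with the base cases $n=1$ ($Q_2=C_4$, $K=\Z/4\Z$) and $n=2$ ($K(Q_3)=\Z/2\Z\oplus(\Z/8\Z)^2\oplus\Z/3\Z$) checked directly, would complete the argument.

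The main obstacle is precisely the $2$-adic splitting together with the exact determination of $\Syl_2(T)$: the change of basis separating $\coker L$ from $\coker(L+2I)$ is integral only after inverting $2$, so the two copies are genuinely entangled at the prime $2$, and neither the snake-lemma extensions nor the matching of invariant-factor multisets splits for formal reasons. Making the ``largest factor splits off'' step rigorous for all of the lower factors simultaneously --- rather than just the single top factor guaranteed by \cref{corollary:asymptoticsharp} --- is the crux, and is exactly why the statement currently remains a conjecture rather than a theorem.
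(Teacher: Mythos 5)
First, a point of calibration: this statement is not proved anywhere in the paper --- it appears in the final section precisely as one of the ``Remaining Conjectures,'' supported only by data. So there is no proof of the authors' to compare yours against, and any attempt here is necessarily going beyond the paper. Your proposal is honest about this, and its structural skeleton is in fact correct: writing $Q_{2^n}=Q_{2^n-1}\times K_2$ gives the block Laplacian $\left(\begin{smallmatrix} L+I & -I\\ -I & L+I\end{smallmatrix}\right)$, the unimodular off-diagonal blocks do yield $\coker L(Q_{2^n})\cong\coker\big(L(L+2I)\big)$ over $\Z$, and your two exact sequences (the multiplication-by-$(L+2I)$ sequence and the snake-lemma sequence comparing with the augmentation, where $L+2I$ acts as $\cdot 2$ on the free quotient) are valid. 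This is a genuinely useful reduction that the paper does not contain.

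However, the proposal does not close, and the gap is exactly where you locate it --- but it is worth stating how severe it is. Writing $C:=\coker\big(\cdot(L+2I)|_{K(Q_{2^n-1})}\big)$, your reduction shows the conjecture is equivalent to the conjunction of (i) the extension $0\to K(Q_{2^n-1})\to K(Q_{2^n})\to C\to 0$ splitting after localizing at $2$, and (ii) $\Syl_2(C)\cong \Syl_2(K(Q_{2^n-1}))\oplus \Z/2^{2^n+n-1}\Z$. Statement (ii) is not a computation you can outsource to the machinery of \cref{topcyclicQnsection}: those lemmata (\cref{orderofmonomial}, \cref{lem:binomial}, \cref{binomial2}) compute orders of specific elements $x_k-1$ and hence the top few invariant factors, whereas (ii) requires the \emph{entire} invariant-factor structure of $C$, and it embeds a second copy of $K(Q_{2^n-1})$ whose appearance is just as mysterious as in the original conjecture --- so (ii) is a restatement of the problem, not a reduction of it. Likewise, your splitting argument for (i) only does what you say it does: a cyclic subgroup whose order equals the group exponent is always a direct summand, so $\Z/2^{2^n+n-1}\Z$ splits off by \cref{corollary:asymptoticsharp}; but nonsplit extensions of $2$-groups abound ($0\to\Z/2\to\Z/4\to\Z/2\to0$), and nothing in the proposal rules this out for the lower factors. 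The factor count $2^{2^n-1}-1=2(2^{2^n-2}-1)+1$ from \cref{thm:dMgenericc} is a consistency check, not evidence of splitting: many non-isomorphic groups share that count. In short, your framework is a sound and nontrivial reformulation, but the conjecture's content --- the doubling of every factor below the top one --- remains untouched, which is consistent with the authors having left it open.
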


We note that \cite[Remark~5.2]{chandler2017smith} allows one to transfer some information from the (well-understood) Smith group of $Q_n$ to $\Syl_2(K(Q_n))$.
Since the amount of information depends on the value of $v_2(n)$,
their method provides some non-trivial information for the L.H.S. of the above conjecture, but unfortunately provides essentially nothing for the R.H.S.. Hence, proving the conjecture (assuming not by computing the sandpile groups directly) can help building a bridge there.

\begin{table}[ht]
\centering
\bgroup
\def\arraystretch{1.5}
\begin{tabular}{|c|l|}
\hline
$n$ & $\on{Syl}_2(K(Q_n))$\\ \hline
$2$ & $\Z_4$\\
$3$ & $\Z_2\Z_8^2$\\
$4$ & $\Z_2^2\Z_8^4\Z_{32}$\\
$5$ & $\Z_2^6\Z_8^4\Z_{16}\Z_{64}^4$\\
$6$ & $\Z_2^{12}\Z_4^4\Z_8\Z_{32}^4\Z_{64}^{10}$\\
$7$ & $\Z_2^{28}\Z_4\Z_{16}^8\Z_{32}^6\Z_{64}^{14}\Z_{128}^6$\\
$8$ & $\Z_2^{56}\Z_4^2\Z_{16}^{16}\Z_{32}^{12}\Z_{64}^{28}\Z_{128}^{12}\Z_{1024}$\\
$9$ & $\Z_2^{120}\Z_4^{10}\Z_{16}^{16}\Z_{32}^{26}\Z_{64}^{48}\Z_{128}^{26}\Z_{512}\Z_{2048}^8$\\
$10$ & $\Z_2^{240}\Z_4^{36}\Z_8^{26}\Z_{32}^{16}\Z_{64}^{148}\Z_{256}\Z_{1024}^{26}\Z_{2048}^{18}$\\
$11$ & $\Z_2^{496}\Z_4^{66}\Z_8^{32}\Z_{16}^{100}\Z_{64}^{164}\Z_{128}\Z_{512}^{100}\Z_{2048}^{64}$\\
\hline
\end{tabular}
\egroup
\caption{Sylow-$2$ cyclic factors of $K(Q_n)$ for $n\leq 11$ (table from \cite{bai})}
\label{tab:signedsymmetric}
\end{table}

\section*{Acknowledgements}

A portion of this research was carried out as part of the 2018 REU program at the School of Mathematics at University of Minnesota, Twin Cities. The first three authors are grateful for the support of NSF RTG grant DMS-1745638 and making the program possible, as well as to thank Professor Victor Reiner for providing both guidance and independence in their research efforts. The first three authors would also like to thank Eric Stucky for his edits to this paper and Amal Mattoo for his contributions to this research.
The fourth author was supported by the Trond Mohn Foundation project ``Algebraic and Topological Cycles in Complex and Tropical Geometries''.
He thanks Victor Reiner for introducing him to the first three authors, and Kevin Iga for the discussion on critical groups.
All authors thank the anonymous referee for the thorough reading and comments.

\printbibliography

\end{document}